\newtheorem{Theorem}{Theorem}[section]
\newtheorem{Lemma}[Theorem]{Lemma}
\newtheorem{Proposition}[Theorem]{Proposition}
\newtheorem{Standard Theorem}[Theorem]{Standard Theorem}
\newtheorem{Corollary}[Theorem]{Corollary}
\newtheorem{Example}[Theorem]{Example}
\newtheorem{Remark}[Theorem]{Remark}
\newtheorem{Definition}[Theorem]{Definition}
\newtheorem{Definition/Lemma}[Theorem]{Definition/Lemma}
\newtheorem{Conjecture}[Theorem]{Conjecture}
\newtheorem{Lemma/Definition}[Theorem]{Lemma/Definition}
\begin{document}
 \author{Charlie Beil}
 \thanks{The author was supported in part by the Simons Foundation, a DOE grant, IPMU, and the PFGW grant.}
 \address{Simons Center for Geometry and Physics, State University of New York, Stony Brook, NY 11794-3636, USA}
 \email{cbeil@scgp.stonybrook.edu}
 \title{The Geometry of Noncommutative Singularity Resolutions}
 \keywords{Noncommutative singularity resolutions, noncommutative algebraic geometry, Azumaya locus, McKay correspondence, preprojective algebra, quiver.\\
 \indent 2010 \textit{Mathematics Subject Classification.} 14E16, 16R20, 16G20.}
 \date{}

\begin{abstract}
We introduce a geometric realization of noncommutative singularity resolutions.  To do this, we first present a new conjectural method of obtaining conventional resolutions using coordinate rings of matrix-valued functions.  We verify this conjecture for all cyclic quotient surface singularities, the Kleinian $D_n$ and $E_6$ surface singularities, the conifold singularity, and a non-isolated singularity, using appropriate quiver algebras.  This conjecture provides a possible new generalization of the classical McKay correspondence.  Then, using symplectic reduction within these rings, we obtain new, non-conventional resolutions that are hidden if only commutative functions are considered.  Geometrically, these non-conventional resolutions result from shrinking exceptional loci to ramified (non-Azumaya) point-like spheres.
\end{abstract}
\maketitle
\tableofcontents

\section{Motivation: a geometric perspective}

We aim to make progress towards answering the following question. 
\begin{quote}
Given a variety X with mild singularities, find a coordinate ring of matrix-valued functions on X that ``sees'' appropriate conventional resolutions in a new way. Using these matrix-valued functions, obtain new, non-conventional resolutions that are hidden if only commutative functions on X are considered.
\end{quote}

The rings of matrix-valued functions that we will consider are quiver algebras.  Stated concisely, a quiver algebra is a quotient of an algebra whose basis consists of all paths in a quiver (that is, directed graph), and the product of two paths is their concatenation if defined and zero otherwise.  A representation of (or module over) a quiver algebra is obtained by associating a vector space to each vertex of the quiver, representing each arrow by a linear map from the vector space at its tail to the vector space at its head, and requiring these linear maps satisfy the relations of the algebra.
  
To motivate our approach to geometry, let $R$ be a commutative noetherian $\mathbb{C}$-algebra.  The points $m$ of the affine variety $X = \operatorname{Max}R$ may always be identified with the simple modules $R/m \cong \mathbb{C}$ over the ring of polynomial functions $R$ on $X$, and a point $m$ in $X$ is smooth (singular) if and only if the projective dimension of the corresponding simple module $R/m$ equals the complex topological dimension of $X$ at $m$, 
$$\operatorname{pd}_R(R/m) = \operatorname{dim}\left(R_m\right)$$
(resp.\ is infinite).  It is therefore natural to extend this idea to noncommutative coordinate rings: if a f.g.\ noncommutative $\mathbb{C}$-algebra $A$ is a finitely generated module over its center $Z$ (or ``module-finite over its center''), then we deem a point $p \in \operatorname{Max}A$ (equivalently, simple $A$-module $V$ whose annihilator is $p$ \cite[Corollary 4.2.3]{Smith}) smooth if its projective dimension equals the topological dimension at $p \cap Z \in \operatorname{Max}Z$,
\begin{equation} \label{proj dim = krull dim}
\operatorname{pd}_A(V) = \operatorname{dim}\left( Z_{p\cap Z} \right).
\end{equation}
Moreover, in the commutative case the evaluation of a function $f = f(x) \in R$ at the point $m = (x-a)$ is the corresponding representation of $f$, namely $f(a)=[f] \in R/m$, so we say the evaluation of a function $f \in A$ at the point $p$ is the representation of $f$ corresponding to $V$, and thus in general $f$ will be a matrix-valued function.

The algebras $A$ and $Z$ are both noetherian by the Artin-Tate lemma \cite[Theorem 4.2.1]{Smith}, and $\operatorname{Max}A$ admits the Zariski topology with closed sets 
$$V(I) := \left\{ p \in \operatorname{Max}A \ | \ I \subseteq p \right\}$$
with $I$ any ideal (since maximal ideals are prime
).  If in addition $A$ is prime then the map $\phi: \operatorname{Max}A \rightarrow \operatorname{Max}Z$ given by $p \mapsto p \cap Z$ is bijective and continuous over an open dense subset of $\operatorname{Max}Z$ called the Azumaya locus of $A$ \cite[Theorem 4.2.7]{Smith}, so $\operatorname{Max}A$ and $\operatorname{Max}Z$ may be regarded in some sense as birationally equivalent.  We therefore call the map $\phi$ a noncommutative resolution of $Z$ if $A$ is smooth in the sense that (\ref{proj dim = krull dim}) holds for each $p \in \operatorname{Max}A$.  Such resolutions were first proposed by the physicists Berenstein, Douglas, and Leigh \cite{Beren, BD, BL} in the context of string theory (see also \cite{DGM}), and formalized independently and more abstractly by Van den Bergh in his definition of a noncommutative crepant resolution \cite[Definition 4.1]{VdB}.  In Van den Bergh's approach, birationality is extended to the noncommutative setting by replacing isomorphic function fields \cite[Corollary 4.5]{H} with Morita equivalent ``noncommutative function fields'' (see for example \cite[section 5.2]{B}). 

We will propose a program to unify, in a geometric sense, the commutative resolutions of a singularity with its noncommutative resolutions.  In so doing we will present a new conjectural method of obtaining commutative resolutions from a noncommutative coordinate ring in section \ref{Almost large modules}.  Using this ring we will then introduce, in section \ref{Shrinking families of almost large modules}, a way of shrinking the irreducible components of the exceptional locus to smooth point-like spheres, where many such spheres may occupy the same point in space.  From this we obtain new resolutions, unseen by the commutative functions, that are (possibly proper) subsets of the maximal ideal spectra of the noncommutative coordinate ring.  The conjecture will be verified for a number of examples in section \ref{Res sing}, including at least one where the singularity is not two-dimensional; not a quotient by a finite group; non-Gorenstein; non-toric; non-isolated.

It would be interesting to understand how our construction is related to Van den Bergh's construction, where a commutative resolution of $\operatorname{Spec}R$ is obtained from a noncommutative $R$-algebra $A$ as an open subset of the fine moduli space $\mathcal{M}^{\theta}_d(A)$ of stable $A$-modules with a fixed dimension vector $d \in \mathbb{Z}_{\geq 0}^{|Q_0|}$ and generic stability parameter $\theta \in \mathbb{Z}^{|Q_0|}$ \cite[Theorem 6.3.1]{VdB}, which is based on the methods of \cite{BKR}.\\
\\
\textbf{Conventions.}  $A$ denotes a finitely generated ( = f.g.) algebra (usually over $\mathbb{C}$).  All modules are left modules, and all representations are complex unless specified otherwise.  The $A$-module $V$ corresponding to a representation $\rho: A \rightarrow \operatorname{End}_{\mathbb{C}}(V)$ is the module defined by $av := \rho(a)v$ for $a \in A$, $v \in V$.  A module isoclass will often be referred to as just a module.  Multiplication of paths in a quiver algebra is read right to left, following the composition of maps.  $Q_{\ell}$ denotes the set of paths of length $\ell$ in a quiver $Q$, and $Q_{\geq 0}$ denotes the set of all paths in $Q$.  Given a quiver algebra $A = \mathbb{C}Q/I$ and vertex $i \in Q_0$, we denote by $S_i$ the ``vertex simple module'' corresponding to the representation of $A$ with a single 1-dimensional vector space at vertex $i$, and with all arrows represented by zero.\\
\\
\textbf{Acknowledgements.} I would like to give special thanks to David Morrison and David Berenstein for invaluable discussions and support.  
I would also like to thank Alastair King, Bal\'azs Sendr\"oi, Leonard Wesley, and Tea Rose for their encouragement.  I would like to thank IPMU for their hospitality and financial support while a portion of this work was completed, as well as the Simons Workshop in Mathematics and Physics 2010.  Work supported in part by the Simons Foundation, the Department of Energy DE-FG02-91ER40618, and the PFGW grant.

\section{Almost large modules}

\subsection{Definition and conjecture} \label{Almost large modules}

We call a simple module (and its corresponding representation) \textit{large} if it is of maximal $\mathbb{C}$-dimension\footnote{When $A$ is module-finite over its center, such modules are also tiny \cite[Theorem 4.2.2]{Smith}!} 
\begin{equation} \label{d}
d = \operatorname{max}\left\{ \operatorname{dim}_{\mathbb{C}} V \ | \ V \text{ a simple $A$-module} \right\}.
\end{equation}
If $A$ is a f.g.\ $\mathbb{C}$-algebra, module-finite over its center $Z$, then $d < \infty$ \cite[Theorem 4.2.2]{Smith}.  If $A$ is also prime then the Azumaya locus of $A$ is the open dense set of points $m \in \operatorname{Max}Z$ such that $A/Am \cong \operatorname{Mat}_d(\mathbb{C})$ (characterizing the ``noncommutative residue fields'' of $A$).  Furthermore, there is a bijection between $Am \in \operatorname{Max}A$ and the large modules $V$, given by $Am = \operatorname{ann}_AV$ \cite[Theorem 4.2.7]{Smith}, 
and so the large modules are parameterized by the Azumaya locus.  Under suitable conditions the Azumaya locus coincides with the smooth locus of $Z$, a fact first discovered by Le Bruyn when the algebra is graded \cite[Theorem 1]{Le Bruyn}, and by Brown and Goodearl when the algebra is not graded \cite[section 3]{BGood}.

\begin{Theorem} \label{LBG}
(Le Bruyn, Brown-Goodearl \cite[Theorem 3.8]{BGood}.) If an algebra is prime, noetherian, Auslander-regular, Cohen-Macaulay, and module-finite over its center $Z$, and if the compliment of the Azumaya locus has codimension at least 2 in $\operatorname{Max}Z$, then the Azumaya and smooth loci coincide.\footnote{A ring $S$ is \textit{Auslander-regular} if $S$ has finite global dimension and satisfies the Auslander condition, namely, that if $p<q$ are non-negative integers and $M$ is a finitely generated $R$-module, then $\operatorname{Ext}^p_S(N,S) = 0$ for every submodule $N$ of $\operatorname{Ext}_S^q(M,S)$.  $S$ is \textit{Cohen-Macaulay} if it has finite Gelfand-Kirillov dimension $\operatorname{GKdim}(S) < \infty$ and 
$$\operatorname{min}\left\{ r \ | \ \operatorname{Ext}^r_S(M,S) \not = 0 \right\} +\operatorname{GKdim}(M) = \operatorname{GKdim}(S)$$ for every finitely generated $S$-module $M$.}
\end{Theorem}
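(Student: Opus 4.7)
The plan is to prove the two inclusions separately. For the inclusion of the Azumaya locus in the smooth locus, at an Azumaya point $m \in \operatorname{Max} Z$ the localization $A_m$ is an Azumaya algebra over $Z_m$, hence Morita equivalent to $Z_m$ via the progenerator $A_m$ itself. Morita equivalence preserves global dimension, and $A_m$ inherits finite global dimension from the Auslander-regularity of $A$, so $\operatorname{gldim} Z_m < \infty$. The Auslander-Buchsbaum-Serre theorem then forces $Z_m$ to be regular, i.e.\ $m$ lies in the smooth locus.

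For the reverse inclusion, suppose $m$ is a smooth point of $Z$, so that $Z_m$ is a regular local ring. The Cohen-Macaulay hypothesis, applied via the Gelfand-Kirillov equality to $A_m$ regarded as a $Z_m$-module (using $\operatorname{GKdim}_Z = \dim Z_m$ at the localization and $\operatorname{GKdim} A = \operatorname{GKdim} Z$ by module-finiteness), yields that $A_m$ is a maximal Cohen-Macaulay $Z_m$-module. Over a regular local ring, every maximal Cohen-Macaulay module is free by the classical Auslander-Buchsbaum formula, so $A_m$ is free over $Z_m$, of rank $d^2$ by comparison with the central simple generic fibre supplied by Posner's theorem (using primeness of $A$).

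Freeness of $A_m$ over $Z_m$ makes it possible to set up the reduced trace form of $A_m$ over $Z_m$ and to form its discriminant $\delta \in Z_m$. This $\delta$ is nonzero, since the trace form is non-degenerate on the generic fibre; and the standard PI-theoretic dictionary identifies the non-Azumaya points of $\operatorname{Spec} Z_m$ with the vanishing locus $V(\delta)$. If $m$ itself were non-Azumaya, then $\delta$ would be a nonzero non-unit in the regular local domain $Z_m$, and Krull's principal ideal theorem would force $V(\delta)$ to have pure codimension one in $\operatorname{Spec} Z_m$. Globalising, this would produce non-Azumaya points of codimension one in $\operatorname{Max} Z$, contradicting the hypothesis that the non-Azumaya locus has codimension at least two. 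Hence every smooth $m$ is Azumaya.

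The main obstacle is the identification, used in the last step, of the non-Azumaya locus with the vanishing locus of the trace discriminant. This rests on the PI-theoretic fact that a central point $m$ is Azumaya exactly when the reduced trace pairing remains non-degenerate after reduction modulo $m$, which in turn requires trace-identity machinery (in the spirit of the Artin-Procesi theorem) to translate between the matrix-algebra structure of the fibre $A/Am$ and the non-vanishing of $\delta$. Everything else in the argument is formal, but installing this dictionary carefully in the noetherian, Auslander-regular, Cohen-Macaulay setting is where the real work lies.
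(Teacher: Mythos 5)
This theorem is not proved in the paper at all: it is imported verbatim from Brown--Goodearl (their Theorem 3.8), so there is no in-paper argument to compare against. Your proposal is, in outline, a faithful reconstruction of the standard proof in the cited source: Azumaya $\Rightarrow$ smooth via finite global dimension and Auslander--Buchsbaum--Serre; smooth $\Rightarrow$ Azumaya via freeness of $A_m$ over the regular local ring $Z_m$, the trace-form discriminant, and purity of codimension one of its zero locus against the codimension-$\geq 2$ hypothesis. So the strategy is the right one.

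Two points need repair. First, your justification of the easy direction contains a false claim: an Azumaya algebra $A_m$ over $Z_m$ is \emph{not} in general Morita equivalent to $Z_m$ (that would say its Brauer class is trivial; a nonsplit quaternion algebra over a field is already a counterexample). What is Morita equivalent to $Z_m$ is $A_m \otimes_{Z_m} A_m^{\mathrm{op}} \cong \operatorname{End}_{Z_m}(A_m)$. The step survives because of the standard fact that for an Azumaya (more generally, separable projective) algebra one has $\operatorname{gldim} A_m = \operatorname{gldim} Z_m$, proved via separability (every $\operatorname{Ext}^i_{A_m}$ is a direct summand of $\operatorname{Ext}^i_{Z_m}$, and $Z_m$ is a $Z_m$-module direct summand of $A_m$); you should route through that instead. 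Second, the assertion that the noncommutative Cohen--Macaulay condition makes $A_m$ a maximal Cohen--Macaulay $Z_m$-module is not formal: the hypothesis is phrased in terms of $j(M)=\min\{r : \operatorname{Ext}^r_A(M,A)\neq 0\}$ and GK-dimension, and translating $j$-numbers into depth over the commutative center is exactly where the Auslander condition is used (this is a genuine lemma in Brown--Goodearl, resting on the Auslander--Gorenstein/CM machinery of Bj\"ork and Levasseur). As written, your direction two never visibly uses Auslander-regularity, which is a sign that this step has been assumed rather than proved. You correctly flag the remaining input --- the identification of the non-Azumaya locus of a locally free prime PI algebra with the vanishing of the trace discriminant --- as the PI-theoretic core; with that and the two repairs above, the argument is the standard one.
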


We introduce the following definitions in hopes of extending this theorem to smooth resolutions of the center of $A$ when $A$ is an infinite dimensional basic algebra, module-finite over its center.  

Recall that two idempotents $e_i$ and $e_j$ are orthogonal if $e_ie_j = e_je_i = \delta_{ij}e_i$; an idempotent is primitive if it cannot be expressed as the sum of two nontrivial orthogonal idempotents; and a set of idempotents is complete if their sum is $1 \in A$.  If $\{e_1,\ldots, e_n\}$ is a complete set of primitive orthogonal idempotents then $A$ decomposes into a direct sum of indecomposable $A$-modules $A = Ae_1 \oplus \cdots \oplus Ae_n$, which is unique up to isomorphism and permutation of the factors since each $Ae_i$ is projective \cite[Corollary 20.23]{Lam}.  A subset $\{e_{i_1}, \ldots, e_{i_m}\}$ of $\{e_1, \ldots, e_n\}$ is basic if $Ae_{i_1}, \ldots, Ae_{i_m}$ is a complete, non-redundant set of representatives of $A$-modules of the form $Ae$ for some primitive idempotent $e$, and $A$ is basic if $\{e_{i_1},\ldots,e_{i_m}\} = \{e_1, \ldots,e_n\}$.  Finally, if $A$ is a basic $k$-algebra and $d \in (\mathbb{Z}_{\geq 0})^n$, then we denote by $\operatorname{Rep}_dA$ the set of $A$-modules $V$ with dimension vector $d = \left( \operatorname{dim}_k(e_iV) \right)$.

We introduce the following definition in order to capture the notion of a path in a quiver algebra without having to refer to one specific basis.

\begin{Definition} \rm{
We say a subset $\mathcal{P}$ of a basic $k$-algebra $A$ is a \textit{path-like set} if $\mathcal{P} \setminus \{0\}$ is a $k$-basis for $A$, $\mathcal{P}$ contains a basic set of idempotents, and $a,b \in \mathcal{P}$ implies $ab \in \mathcal{P}$.
} \end{Definition}

\begin{Remark} \rm{
If $A = \mathbb{C}Q/I$ is a quiver algebra with vertex set $Q_0 = \{1,2,\ldots, n\}$ and $a \in e_2Q_1e_1$, then the set $\{e_1+a,e_2-a,e_3, \ldots, e_n\}$ is a complete set of primitive orthogonal idempotents in $A$ different from the vertex idempotents.  Note that $e_1+a$ and $e_2-a$ are primitive since there are $A$-module isomorphisms $A(e_1+a) \cong Ae_1$ and $A(e_2-a) \cong Ae_2$, and $Ae_1$ and $Ae_2$ are indecomposable.\footnote{There are $A$-module monomorphisms $A(e_1+a) \stackrel{\operatorname{id}}{\longrightarrow} Ae_1$; $Ae_1 \stackrel{\operatorname{id}}{\longrightarrow} A(e_1+a)$; $A(e_2-a) \stackrel{ \cdot e_2}{\longrightarrow}Ae_2$; and $Ae_2 \stackrel{\cdot (e_2-a)}{\longrightarrow}A(e_2-a)$.}
} \end{Remark}  

Recall that in a noetherian integral domain $R$, the codimension of a prime ideal $p$ is the length $\ell$ of a maximal chain $p_0 \subsetneq p_1 \subsetneq \cdots \subsetneq p_{\ell} = p$ of distinct prime ideals, and $\ell$ equals the codimension of the subvariety defined by $p$ in $\operatorname{Max}R$.

\begin{Definition} \label{almost large} \rm{
Let $A$ be a f.g.\ basic algebra, module-finite over its prime center $Z$.  Suppose $d$ is the dimension vector of a large $A$-module.  For $1 \leq \ell \leq \operatorname{dim}Z$, we say a subset $P$ of $A$ has \textit{codimension $\ell$} if there is a path-like set $\mathcal{P}$ of $A$ and a maximal chain of subsets
\begin{equation} \label{max chain}
0 \subsetneq P_1 \subsetneq \cdots \subsetneq P_{\ell} = P
\end{equation}
such that each $P_j$ is the $\mathcal{P}$-annihilator of a module in $\operatorname{Rep}_dA$.  If $V \in \operatorname{Rep}_dA$ is non-simple and satisfies $\operatorname{ann}_{\mathcal{P}}V = P$ then we say $V$ is an \textit{almost large $A$-module}.
}\end{Definition}

Note that $P$ is a multiplicatively closed subset of $A$.  Also, if $d \not = (1, \ldots, 1)$ then the ideal generated by $P$ will in general not be prime.  We will call $V$ an $\ell_{\mathcal{P}} = \ell$ almost large module.

Recall that the top $\operatorname{Top}V$ of a module $V$ is the largest semisimple quotient of $V$, while the socle $\operatorname{Soc}V$ (``bottom'') is the largest semisimple submodule of $V$.  
If $A$ is module-finite over its noetherian center $Z$, then we say $A$ is \textit{homologically smooth} if (\ref{proj dim = krull dim}) holds for each $p \in \operatorname{Max}A$.

\begin{Conjecture} \label{conjecture}
Let $A$ be as in Definition \ref{almost large}, and in addition homologically smooth with a singular center $Z$.  Suppose a primitive idempotent $e \in A$ satisfies
\begin{equation} \label{e in L}
\operatorname{max}\left\{ \operatorname{dim}_{\mathbb{C}}(eW) \ | \ W \text{ a large $A$-module} \right\} = 1.
\end{equation}
If the large $A$-module isoclasses are parameterized by the smooth locus of $\operatorname{Max}Z$ then the following hold:
\begin{enumerate}
 \item The isoclasses of almost large $A$-modules $V$, with $\operatorname{Soc}V = e\operatorname{Soc}V$, are parameterized by the exceptional locus $E$ of a smooth resolution $Y \rightarrow \operatorname{Max}Z$.
 \item For any fixed path-like set $\mathcal{P}$ of $A$, there is a natural bijection between the irreducible components $E_i$ of $E$ and the distinct subsets $P$ with the properties that $P$ is the $\mathcal{P}$-annihilator of an almost large module $V$ with $\operatorname{Soc}V = e \operatorname{Soc}V$, and if $P = P_{\ell}$ occurs in a maximal chain (\ref{max chain}) then the proceeding term $P_{\ell -1}$ is the $\mathcal{P}$-annihilator of a large module.
 \item If there exists a sequence of $\mathcal{P}$-annihilators
$$0 \subsetneq P_1 \subsetneq \cdots \subsetneq P_j \subsetneq \cdots \subsetneq P_{\ell},$$
where $P_j$ corresponds to the irreducible component $E_i$ by the natural bijection, then the isoclasses of almost large modules $V$, with $\operatorname{Soc}V = e \operatorname{Soc}V$ and $\mathcal{P}$-annihilator $P_{\ell}$, are parameterized by a codimension $\ell$ (in $Y$) quasi-projective subvariety of $E_i$.
\end{enumerate}
\end{Conjecture}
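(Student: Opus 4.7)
The plan is to realize almost large modules as degenerations of large modules. Since the large (Azumaya) locus is assumed to be parameterized by the smooth part of $\operatorname{Max}Z$, any family of large $A$-modules whose spectral parameter approaches a singular point must degenerate to a non-simple $d$-dimensional module. The resolution $Y \to \operatorname{Max}Z$ organizes the directions of approach: given an arc in $\operatorname{Max}Z$ that hits a singular point, lift it to an arc in $Y$ meeting the exceptional locus $E$, and pull back the corresponding coherent family of $d$-dimensional $A$-modules (available because $A$ is module-finite over $Z$). I would show the resulting limit module at each point of $E$ is almost large, and that condition (\ref{e in L}) forces $\operatorname{Soc}V = e\operatorname{Soc}V$ at the limit, because the $e$-component of any large $W$ is already $1$-dimensional and so stays simple when the complementary composition factors collide.

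For part (1), I would construct a map $Y \to \{\text{almost large } A\text{-modules with } \operatorname{Soc}V=e\operatorname{Soc}V\}/{\cong}$ sending $y \in E$ to the limit module along a generic arc through $y$, and use homological smoothness of $A$ (via the Auslander-type control on extensions implicit in the hypothesis) to see the limit is unique up to isomorphism. Surjectivity, i.e.\ that every such almost large $V$ arises as a limit of large modules, is the key geometric content: I would attempt to realize $V$ as a special fiber of a flat family parameterized by a smooth curve whose generic fiber is large, and then lift the base of that family to $Y$, using that $\operatorname{Rep}_dA$ contains the parameter space of large modules as an open dense $\operatorname{GL}_d$-invariant subvariety.

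For part (2), send each irreducible component $E_i$ to $\operatorname{ann}_\mathcal{P} V_i$, where $V_i$ is the almost large module attached to the generic point $\eta_i$ of $E_i$ by the map in (1). Well-definedness is essentially automatic because $\mathcal{P}$ is fixed throughout and the $\mathcal{P}$-annihilator is a subset of $\mathcal{P}$, not an ideal, so it is insensitive to the choice of representative in the isoclass. Injectivity expresses that distinct components degenerate along distinct sets of paths vanishing in the representation, and the surjectivity clause (that $P_\ell$ be preceded by a large-module annihilator in some chain) exactly encodes that $E_i$ is adjacent to the open Azumaya stratum. For part (3), stratify $E_i$ by the length of nested $\mathcal{P}$-annihilators: each strict inclusion $P_{j-1} \subsetneq P_j$ corresponds to forcing one additional path-like element of $\mathcal{P}$ to act as zero, which should cut out one further codimension in $E_i$, so a chain of length $\ell$ yields a codimension-$\ell$ quasi-projective subvariety, constructible as the locus in $\operatorname{Rep}_dA$ where a specified finite subset of $\mathcal{P}$ acts trivially intersected with the image of $E_i$.

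The main obstacle will be in (1) and (2). Even granted that limits of large modules are almost large, there is no obvious a priori reason that \emph{every} almost large module with $\operatorname{Soc}V=e\operatorname{Soc}V$ appears as such a limit, and excluding spurious almost large modules likely requires a genuine moduli-theoretic bridge to $Y$, probably along the lines of Van den Bergh's $\mathcal{M}^{\theta}_d(A)$, where one would identify $Y$ with a connected component of stable almost large modules. The second delicate point is that the statement is required to be true for \emph{any} fixed path-like set $\mathcal{P}$: I would reduce to a canonical $\mathcal{P}$, for instance the paths of a presenting quiver of $A$, and then compare $\mathcal{P}$-annihilators under change of path-like set by tracking how basis transitions permute subsets while preserving the chain-length invariant. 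Verifying this invariance, rather than the geometric assignment itself, is where I expect the technical bulk of the argument to lie.
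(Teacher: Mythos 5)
The statement you are attempting to prove is Conjecture \ref{conjecture}: the paper offers no general proof of it, and explicitly says so (``We will verify this conjecture for a number of examples in section \ref{Res sing}''). What the paper actually supplies is a case-by-case verification (Proposition \ref{first proposition}, the conifold, Theorem \ref{cyclic theorem}, Theorem \ref{D_n theorem}, Proposition \ref{3d A}) carried out by purely combinatorial means: one fixes the path-like set $\mathcal{P}=Q_{\geq 0}\cup\{0\}$, classifies the supporting subquivers of modules in $\operatorname{Rep}_dA$ with the prescribed socle, exhibits explicit maximal chains of $\mathcal{P}$-annihilators, and matches the resulting $\mathbb{P}^n$-families to known descriptions of the resolution (boundary lattice points for Hirzebruch--Jung, Kronheimer's moduli space for the Kleinian cases, the toric diagram in the non-isolated case). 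Your proposal is a genuinely different, more intrinsic strategy (degenerations of large modules, flat families, a moduli-theoretic identification of $Y$ with $\mathcal{M}^{\theta}_d(A)$), and it is closest in spirit to the paper's ``Claim II'' in the $D_{n+2}$/$E_6$ proof, where stability with respect to $\theta=(-1+\sum_j d_j,-1,\dots,-1)$ is used to show every almost large module with the given socle lies on $Y$. But as written it is a programme, not a proof, and you correctly identify that surjectivity in (1) and $\mathcal{P}$-independence in (2) are unresolved.

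Beyond the gaps you acknowledge, one step is actually wrong: you claim that condition (\ref{e in L}) ``forces $\operatorname{Soc}V=e\operatorname{Soc}V$ at the limit'' of a family of large modules approaching a singular point. The conifold shows this fails. There both vertex idempotents satisfy (\ref{e in L}) (the large modules have dimension vector $(1,1)$), and the limits of large modules along different directions have socle $S_1$ or $S_2$; the two choices of socle parameterize the exceptional loci of the \emph{two distinct} crepant resolutions $Y^{\pm}$. So the assignment ``$y\in E\mapsto$ limit module along a generic arc through $y$'' does not land in the set of almost large modules with a fixed socle, and the resolution $Y$ in part (1) is not given in advance but is selected by the choice of $e$. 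Your construction of the map in (1) therefore needs to be reorganized: one must first fix $e$, restrict to degenerations whose limit has socle $e\operatorname{Soc}V$ (equivalently, impose the corresponding stability condition), and only then identify the parameter space with a resolution. Similarly, in (3) the heuristic ``each strict inclusion $P_{j-1}\subsetneq P_j$ forces one additional element of $\mathcal{P}$ to vanish, hence cuts one codimension'' is too coarse: a single step in a maximal chain typically adds an infinite set of paths to the annihilator (all paths through a deleted arrow), and the codimension count is governed by maximality of the chain, not by counting vanishing elements.
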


We will verify this conjecture for a number of examples in section \ref{Res sing}.  The underlying idea is then
\begin{center} \framebox{ \begin{tabular}{rcl}
\begin{tabular}{r} smooth locus of an affine variety \end{tabular} & $\longleftrightarrow$ & \begin{tabular}{l} large module isoclasses \end{tabular}\\ \\
\begin{tabular}{r} exceptional locus of a \\ smooth resolution \end{tabular} & $\longleftrightarrow$ & \begin{tabular}{l} almost large module isoclasses \\ with isomorphic 1-dim'l socles \end{tabular}\\ \\
\begin{tabular}{r} exceptional locus shrunk \\ to zero size \end{tabular} & $\longleftrightarrow$ & \begin{tabular}{l} tops of these \\ almost large module isoclasses \end{tabular}
\end{tabular}}
\end{center}
where the correspondence is given by parameterization.  The last item will be introduced in the next section.  The guiding principle is that if $V$ and $W$ are two non-isomorphic large modules and the points $\operatorname{ann}_ZV$ and $\operatorname{ann}_Z W$ lie on the same line that passes through a singular point of $\operatorname{Max}Z$, then $V$ and $W$ become isomorphic, and hence $\operatorname{ann}_Z V$ and $\operatorname{ann}_Z W$ become identified, when a minimal number of elements in $A$ are set equal to zero.  

\begin{Remark} \rm{
We will only verify (2) in Conjecture \ref{conjecture} for the path-like set $\mathcal{P} = Q_{\geq 0} \cup \{0\}$, though it will easily follow for any path-like set containing the vertex idempotents, since such a set is multiplicatively generated by the vertex idempotents and a basis for $\mathbb{C}Q_1$ consisting of elements of the form $\sum_{a \in e_jQ_1e_i}\gamma_a a$, with $\gamma_a \in \mathbb{C}$, $i,j \in Q_0$.
} \end{Remark} 

\begin{Remark} \rm{
In physics terms, a path-like set $\mathcal{P}$ may be viewed as the set of dibaryon operators in a quiver gauge theory, and the $\mathcal{P}$-annihilator of a point in the vacuum moduli space would then be the set of all dibaryons with zero vev at that point (in some sense, since a non-cyclic path will not be gauge invariant, and vev's are gauge invariant).
}\end{Remark}

\begin{Remark} \rm{
Let $A = \mathbb{C}Q/I$ be a quiver algebra satisfying the hypothesis of Conjecture \ref{conjecture}, and let $i \in Q_0$ be such that $e_i$ satisfies (\ref{e in L}).  We ask the question: does the set of almost large $A$-modules whose socles are isomorphic to the vertex simple $S_i$ always equal the entire set of non-simple modules whose socles are isomorphic to $S_i$ and whose dimension vector $d$ equals that of a large module?  Similarly, if a resolution of the center of $A$ is an open subset of the $\theta$-stable moduli space $\mathcal{M}_d^{\theta}(A)$ with generic stability parameter $\theta = \left(-1 + \sum_{j \in Q_0}d_j, -1 , \ldots, -1 \right) \in \mathbb{Z}^{|Q_0|}$, where the first component is $\theta_i$, then is the resolution necessarily the entire moduli space?
}\end{Remark}

\subsection{Shrinking families of almost large modules} \label{Shrinking families of almost large modules}
In most cases we consider, isoclasses of almost large modules are parameterized by collections of $\mathbb{P}^n$'s.  To make precise the notion of a $\mathbb{P}^n$-family of module isoclasses, we introduce the following definition; note the similarity with Definition \ref{impression} given below.

\begin{Definition} \label{P^n family} \rm{
Let $A$ be a $\mathbb{C}$-algebra, set $\mathbb{C}[t] := \mathbb{C}[t_1, \ldots, t_{n+1}]$, and suppose that there exists an algebra monomorphism
\begin{equation}
\sigma: A \longrightarrow \operatorname{End}_{\mathbb{C}[t]}(\mathbb{C}[t]^{\oplus d}).
\label{sigma''}
\end{equation}
Then for each $z \in \mathbb{C}^{n+1}$ the composition of $\sigma$ with the evaluation map at $z$,
$$A \stackrel{\sigma}{\longrightarrow} \operatorname{End}_{\mathbb{C}[t]}(\mathbb{C}[t]^{\oplus d}) \stackrel{\epsilon_{z}}{\longrightarrow} \operatorname{End}_{\mathbb{C}[t]}((\mathbb{C}[t]/(t-z))^{\oplus d}) \cong \operatorname{End}_{\mathbb{C}}(\mathbb{C}^{d}),$$
is a representation of $A$, and $V_z:= \mathbb{C}^d$ is an $A$-module with $av:= \epsilon_z \sigma(a)v$.  We say that the set of module isoclasses 
$$\left\{ \left[ V_{z} \right] \ | \ z \in \mathbb{C}^{n+1} \setminus 0 \right\}$$ 
is a $\mathbb{P}^n$\textit{-family} if it has the property that $V_z \cong V_{z'}$ if and only if there exists a $\lambda \in \mathbb{C}^*$ such that $(z'_1, \ldots, z'_{n+1}) = (\lambda z_1, \ldots, \lambda z_{n+1})$.
} \end{Definition}  

In section \ref{A first example} we will recall how $|\lambda|$ may be realized as the radius of $\mathbb{P}^n$ when viewed as an $n$-dimensional sphere using symplectic geometry.  Let $A = kQ/I$ be a quiver algebra admitting a $\mathbb{P}^n$-family $\left\{ \left[ V_z \right] \right\}$ of $A$-modules.  For $i \in Q_0$ set $d_i := \operatorname{dim}_{\mathbb{C}}(e_iV_x)$ and $d := \sum_i d_i$.  Denote by $\lambda$ an indeterminate and $\lambda_*$ an arbitrary element of $\mathbb{C}^*$.  
Let $V_t := \mathbb{C}[t]^{\oplus d}$ be the $A$-module defined by $av := \sigma(a)v$.  Suppose there exists an isomorphism 
\begin{equation} \label{phi}
\phi_{\lambda}: V_t \stackrel{\cong}{\longrightarrow} V_{\lambda t}
\end{equation}
where
$$\phi_{\lambda} \in \bigoplus_{i \in Q_0} \operatorname{GL}_{d_i}\left( \mathbb{C}(\lambda) \right).$$
Then for each $z \in \mathbb{C}^{n+1}\setminus 0$ and $\lambda_* \in \mathbb{C}^*$ there is an isomorphism 
$$\phi_{\lambda_*}: V_z \stackrel{\cong}{\longrightarrow} V_{\lambda_* z}.$$
For each $i \in Q_0$ we will denote by $\phi_{\lambda,i}$ the restriction of $\phi_{\lambda}$ to the factor $\operatorname{GL}_{d_i}(\mathbb{C})$.

Suppose the least power of $\lambda$ that appears in all the matrix entries of $\phi_{\lambda}$ is $m \in \mathbb{Z}$.  Since there is a trivial diagonal $\mathbb{C}^*$-action on the isomorphism parameters, there is also an isomorphism $\lambda_*^{-m}\phi_{\lambda_*}: V_z \stackrel{\cong}{\longrightarrow} V_{\lambda_* z}$.  With this choice of rescaling, the limit
$$\phi_0:= \operatorname{lim}_{\lambda \rightarrow 0} \lambda^{-m} \phi_{\lambda} \in \bigoplus_{i \in Q_0} \operatorname{Mat}_{d_i}\left( \mathbb{C} \right)$$
is nonzero and finite.  We will write $\phi_{\lambda}$ as $\phi^z_{\lambda}$ when we need to specify the module $V_z$ on which $\phi_{\lambda}$ is acting.

\begin{Lemma} \label{V_0}
$V_z/\operatorname{ker} \phi^z_0 \cong V_{z'}/\operatorname{ker}\phi^{z'}_0$ for each $z, z' \in \mathbb{C}^{n+1}\setminus 0$.
\end{Lemma}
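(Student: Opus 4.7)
The key observation I would exploit is that $\phi_\lambda$ is by hypothesis an element of $\bigoplus_i \operatorname{GL}_{d_i}(\mathbb{C}(\lambda))$, so its entries depend only on $\lambda$, not on $t$. Consequently the rescaled limit $\phi_0 = \lim_{\lambda \to 0} \lambda^{-m}\phi_\lambda$ is a single matrix in $\bigoplus_i \operatorname{Mat}_{d_i}(\mathbb{C})$ that does not depend on $z$. Thus for every $z \in \mathbb{C}^{n+1}\setminus 0$, the map $\phi_0^z$ is literally the same $\mathbb{C}$-linear endomorphism $\phi_0$ of $\mathbb{C}^d$; only the $A$-module structures on the source (which is $V_z$) and target (which, since $\phi_\lambda^z$ lands in $V_{\lambda z}$, is $V_0$ in the limit) vary with the choice of $z$, and the target in fact does not vary at all.

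The next step is to verify that $\phi_0^z : V_z \to V_0$ is $A$-linear. The intertwining identity expressing that $\phi_\lambda : V_t \to V_{\lambda t}$ is an $A$-module isomorphism can be written, after evaluating at $t = z$ and multiplying by $\lambda^{-m}$, as
$$\lambda^{-m}\phi_\lambda \,\sigma(a)(z) \;=\; \sigma(a)(\lambda z)\, \lambda^{-m}\phi_\lambda$$
for every $a \in A$. Both sides admit well-defined limits as $\lambda \to 0$: the factor $\sigma(a)(z)$ is independent of $\lambda$, $\sigma(a)(\lambda z)$ is polynomial in $\lambda$ and hence tends to $\sigma(a)(0)$, and $\lambda^{-m}\phi_\lambda$ tends to the finite nonzero matrix $\phi_0$ by the very choice of $m$. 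Passing to the limit gives $\phi_0\,\sigma(a)(z) = \sigma(a)(0)\,\phi_0$, which is precisely the statement that $\phi_0^z : V_z \to V_0$ is an $A$-module homomorphism.

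The remainder is then formal. The first isomorphism theorem yields $V_z/\ker\phi_0^z \cong \operatorname{im}\phi_0^z$ as $A$-modules, and since $\phi_0$ and its codomain $V_0$ are both manifestly independent of $z$, the image $\operatorname{im}\phi_0^z$ is the same $A$-submodule of $V_0$ for every $z \in \mathbb{C}^{n+1}\setminus 0$. Composing the isomorphisms $V_z/\ker\phi_0^z \cong \operatorname{im}\phi_0 \cong V_{z'}/\ker\phi_0^{z'}$ finishes the proof. The only delicate point, and the one I would be careful to spell out, is the limit passage in the intertwining identity; this works cleanly precisely because the rescaling factor $\lambda^{-m}$ was engineered so that $\lambda^{-m}\phi_\lambda$ has a finite nonzero limit, and because $\sigma$ produces matrices polynomial in the $t$-variables so that specialization at $\lambda z$ is continuous in $\lambda$. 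Beyond this, the lemma really is a matter of unwinding notation to see that the $z$-dependence has been arranged to disappear.
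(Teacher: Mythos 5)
Your proposal is correct and follows essentially the same route as the paper: both arguments observe that $\phi_{0}$ is a fixed matrix independent of $z$, pass to the limit in the intertwining relation $\phi_{\lambda}\sigma(a)(z)=\sigma(a)(\lambda z)\phi_{\lambda}$ to see that the limiting module structure $\epsilon_{0}\sigma$ is constant in $z$ and that $\phi_{0}$ is $A$-linear, and then conclude via the first isomorphism theorem from the equality of images inside this common target. Your explicit justification of the limit passage (polynomiality in $\lambda$ plus the normalization making $\lambda^{-m}\phi_{\lambda}$ finite and nonzero) is exactly the content the paper's proof relies on.
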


\begin{proof}
Let $\sigma_{\lambda t}: A \rightarrow \operatorname{End}_{\mathbb{C}[t]}(\mathbb{C}[t]^{\oplus d})$ be the $\mathbb{C}[t]$-representation corresponding to $V_{\lambda t}$, so in particular $\sigma_t := \sigma$, and without loss of generality suppose the least power of $\lambda$ that appears in the matrix entries of $\phi_{\lambda}$ is zero.  For each arrow $a \in Q_1$, each $t_i$ that appears in the matrix entries of $\sigma(a) = \sigma_{t}(a)$ is mapped to $\lambda t_i$ in the matrix $\sigma_{\lambda t}(a)$ under the transformation given by
$$\phi_{\lambda, \operatorname{h}(a)} \sigma_t(a) = \sigma_{\lambda t}(a) \phi_{\lambda,\operatorname{t}(a)}.$$
In particular $t_i$ is mapped to $0$ in the matrix $\sigma_{0t}(a)$ under the transformation given by
$$\phi_{0,\operatorname{h}(a)} \sigma_t(a) = \sigma_{0t}(a) \phi_{0, \operatorname{t}(a)},$$
so $\sigma_0(a) = \sigma_{0t}(a)$ does not depend on the $t_i$, and thus the matrix $\epsilon_z \sigma_0(a)$ does not depend on the choice of $z$.  Now $a$ acts on $V_{0z}$ by $\epsilon_z \sigma_0(a)$, so $V_{0z} = V_{0z'}$ for each $z,z' \in \mathbb{C}^{n+1}\setminus 0$, and under this identification, $\operatorname{im}\phi_0^z = \operatorname{im}\phi_0^{z'}$.

The module epimorphisms
$$\phi^z_0: V_z \rightarrow \operatorname{im}\phi_0^z \ \ \ \text{ and } \ \ \ \phi^{z'}_0: V_{z'} \rightarrow \operatorname{im}\phi_0^{z'}$$
then imply $V_z/\operatorname{ker}\phi^z_0 \cong \operatorname{im}\phi_0^z = \operatorname{im}\phi_0^{z'} \cong V_{z'}/\operatorname{ker}\phi^{z'}_0$.
\end{proof}

Set $V_0 := V_z/\operatorname{ker}\phi^z_0$.  By Lemma \ref{V_0}, $V_0$ does not depend on the choice of $z \in \mathbb{C}^{n+1}\setminus 0$ up to isomorphism.

\begin{Lemma} \label{1-dim socle}
If there is a $z \in \mathbb{C}^{n+1}\setminus 0$ such that the socle of $V_z$ is 1-dimensional, then $V_0$ does not depend on the choice of $\phi_{\lambda}$ satisfying (\ref{phi}).
\end{Lemma}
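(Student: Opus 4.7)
My plan is to compare two valid choices $\phi_\lambda, \phi'_\lambda$ by forming their ratio $u(\lambda) := \phi_\lambda^{-1}\phi'_\lambda \in \bigoplus_{i \in Q_0}\operatorname{GL}_{d_i}(\mathbb{C}(\lambda))$, use the $1$-dimensional socle hypothesis to collapse $u(\lambda)$ to a scalar in $\mathbb{C}(\lambda)$, and then conclude that $\phi'_0$ is a nonzero scalar multiple of $\phi_0$, so that $V_0$ and $V'_0 := V_z/\ker(\phi'_0)^z$ coincide. First I would verify, via the intertwining relations $\phi_\lambda \sigma_t(a) = \sigma_{\lambda t}(a)\phi_\lambda$ and the analogue for $\phi'_\lambda$, that $u(\lambda)$ commutes with $\sigma_t(a)$ for every $a \in A$; since $u(\lambda)$ has no $t$-dependence, its specialization at each $z \in \mathbb{C}^{n+1}\setminus 0$ is an $A$-module automorphism of $V_z$.

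Setting $S := \operatorname{Soc} V_{z_*}$, the $1$-dim socle hypothesis makes $V_{z_*}$ indecomposable, so $\operatorname{End}_A(V_{z_*})$ is a local $\mathbb{C}$-algebra with residue field $\mathbb{C}$. Indeed, the Jacobson radical $J$ is exactly the ideal of endomorphisms vanishing on $S$: if some $f \in \operatorname{End}_A(V_{z_*})$ had nonzero restriction to $S$, then $f|_S$ would be an isomorphism of the simple socle (Schur), and the essentiality of the socle in the finite-dimensional $V_{z_*}$ would force $f$ to be injective, hence a unit. Thus $\operatorname{End}_A(V_{z_*}) = \mathbb{C}\cdot\operatorname{id} \oplus J$, and the specialization $u(\lambda)|_{z_*}$ decomposes uniquely as $c(\lambda)\operatorname{id} + n(\lambda)$ with $c(\lambda) \in \mathbb{C}(\lambda)^{*}$ (nonzero because $u$ is invertible, so its image in the residue field is a unit) and $n(\lambda) \in J \otimes_{\mathbb{C}} \mathbb{C}(\lambda)$.

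The crux is to force $n(\lambda) = 0$. Reading the commutation relation $[u(\lambda),\sigma_t(a)]=0$ as a polynomial identity in $t$ and noting that $u(\lambda)$ is $t$-independent, the scalar--nilpotent decomposition must be compatible with the $A$-action at every $z$; a nonzero $n(\lambda)$ would furnish a single $z$-independent nilpotent endomorphism that simultaneously sits in $\operatorname{End}_A(V_z)$ for every $z$, which is incompatible with the rigidity of the $\mathbb{P}^n$-family (distinct projective classes $[z]\neq[z']$ label non-isomorphic modules). Hence $n(\lambda)\equiv 0$ and $u(\lambda) = c(\lambda)\operatorname{id}$, so $\phi'_\lambda = c(\lambda)\phi_\lambda$. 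Writing $c(\lambda) = c_k\lambda^k + O(\lambda^{k+1})$ with $c_k \neq 0$ then gives $m' = m+k$ and $\phi'_0 = c_k\phi_0$, so $\ker\phi'_0 = \ker\phi_0$ as subspaces of $V_z$ and
\[
V'_0 = V_z/\ker(\phi'_0)^z \cong V_z/\ker\phi_0^z = V_0
\]
as $A$-modules.

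The main obstacle will be the rigidity step forcing $n(\lambda)\equiv 0$: although the $1$-dim socle at $z_*$ cleanly decomposes $\operatorname{End}_A(V_{z_*})$ into scalar and nilpotent parts, excluding a universal ($t$-independent) nilpotent across the whole $\mathbb{P}^n$-family is what really demands nontriviality of the parameterization. A clean implementation should expand the commutator $[n(\lambda),\sigma_t(a)] = 0$ degree by degree in $t$ and propagate the vanishing using the simplicity of $S$ together with the essentiality of the socle.
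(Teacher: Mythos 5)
Your overall strategy (reduce to showing that $u(\lambda)=\phi_\lambda^{-1}\phi'_\lambda$ acts on $V_z$ as a scalar) is the right one, and your first two paragraphs are sound: $u(\lambda)$ is indeed a $t$-independent $A$-module automorphism of each $V_z$, and the $1$-dimensional essential socle does make $\operatorname{End}_A(V_{z})$ local with residue field $\mathbb{C}$. But the step you yourself flag as the crux --- forcing $n(\lambda)\equiv 0$ --- is a genuine gap, and the justification you sketch does not work. The existence of a single $z$-independent nilpotent endomorphism living in $\operatorname{End}_A(V_z)$ for every $z$ is in no way incompatible with the $V_z$ being pairwise non-isomorphic: non-simple modules routinely carry nontrivial nilpotent endomorphisms (e.g.\ $\mathbb{C}[x]/(x^2)$ over itself has simple socle and a nilpotent endomorphism), and a family of pairwise non-isomorphic modules can carry such endomorphisms uniformly. ``Rigidity of the $\mathbb{P}^n$-family'' is not a usable hypothesis here; nothing about non-isomorphy of distinct members constrains the endomorphism ring of a single member.

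The correct way to kill $n(\lambda)$ --- and this is what the paper does --- is a statement about one fixed module $V_z$, using not just that $\operatorname{Soc}V_z$ is simple but that it is concentrated at a single vertex $0$ with $\dim_{\mathbb{C}} e_0V_z=1$, so that the block $u_0(\lambda)$ of any endomorphism at that vertex is a scalar. For each vertex $i$ one then chooses $d_i$ paths $p_1,\dots,p_{d_i}$ from $i$ to $0$ inductively (at each stage picking a vector in the common kernel of the previous $\rho(p_k)$ and a path not annihilating it, which exists precisely because the socle sits entirely at vertex $0$), so that the stacked matrix $B=\bigl[\rho(p_1);\dots;\rho(p_{d_i})\bigr]$ is injective. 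The intertwining relation $u_0\rho(p_k)=\rho(p_k)u_i$ then gives $Bu_i = cB$ with $c=u_0\in\mathbb{C}$, hence $u_i=c\mathbf{1}_{d_i}$; in particular the radical of $\operatorname{End}_A(V_z)$ is zero, not merely complemented by $\mathbb{C}\cdot\operatorname{id}$. With that lemma in hand your concluding paragraph (leading coefficient $c_k\neq 0$, $\phi'_0=c_k\phi_0$, equal kernels, $V_0'\cong V_0$) goes through. So the proof is repairable, but the repair is exactly the missing content, and your proposed route to it would fail.
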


\begin{proof}
Let $z \in \mathbb{C}^{n+1}\setminus 0$ be such that $\operatorname{Soc}V_z$ is 1-dimensional, say at $0 \in Q_0$.  Since $z$ is fixed we will write $\operatorname{ker}\phi^z_0$ as just $\operatorname{ker}\phi_0$.

Let $\phi_{\lambda}$ and $\phi'_{\lambda}$ be two isomorphisms $V_t \stackrel{\cong}{\longrightarrow} V_{\lambda t}$, so they are also isomorphisms $V_z \stackrel{\cong}{\longrightarrow} V_{\lambda z}$.  We claim that $\operatorname{ker} \phi_0 = \operatorname{ker} \phi'_0 \subset V_z$.  Denote by $\rho$ and $\rho_{\lambda_*}$ the representations $A \rightarrow \operatorname{Mat}_{d}(\mathbb{C})$ corresponding to $V_z$ and $V_{\lambda_* z}$ respectively.  

Fix $i \in Q_0$.  Then for each path $p \in e_0Q_{\geq 0}e_i$,
\begin{equation} \label{cc'}
c \rho(p) \phi_{\lambda, i}^{-1} = \phi_{\lambda, 0} \rho(p) \phi_{\lambda,i}^{-1} = \rho_{\lambda}(p) = \phi'_{\lambda,0} \rho(p) \phi_{\lambda, i}^{'-1} = c' \rho(p) \phi_{\lambda, i}^{'-1},
\end{equation}
where $\phi_{\lambda,0} = c \in \mathbb{C}^*$ and $\phi'_{\lambda,0} = c' \in \mathbb{C}^*$.  Choose $d_i = \operatorname{dim}_{\mathbb{C}} e_iV_z$ paths $\left\{ p_1, \ldots, p_{d_i} \right\}$ from $i$ to $0 \in Q_0$ inductively as follows.  Choose $v_1 \in e_i V_z \cong \mathbb{C}^{d_i}$.  Since $\operatorname{Soc}V_z \cong \mathbb{C}$ is at $0 \in Q_0$, there exists a path $p_1 \in e_0Q_{\geq 0}e_i$ such that $\rho(p_1)v_1 \not = 0$.  Now suppose the paths $\left\{ p_1, \ldots, p_{j-1} \right\}$ have been chosen.  Choose $v_j \in \operatorname{ker}\rho(p_1) \cap \ldots \cap \operatorname{ker} \rho(p_{j-1}) \cap e_iV_z$.  Again since $\operatorname{Soc}V_z \cong \mathbb{C}$ is at $0 \in Q_0$ there exists a path $p_j \in e_0Q_{\geq 0}e_i$ such that $\rho(p_j)v_j \not = 0$.  View each $\rho(p_k)$ as an element of $\operatorname{Mat}_{1 \times d_i}(\mathbb{C})$ and recall $\phi_{\lambda,i} \in \operatorname{Mat}_{d_i \times d_i}(\mathbb{C})$.  Then 
$$\operatorname{dim \ ker} \left[ \begin{array}{c} \rho(p_1) \\ \vdots \\ \rho(p_{j-1}) \\ \rho(p_j) \end{array} \right] < \operatorname{dim \ ker} \left[ \begin{array}{c} \rho(p_1) \\ \vdots \\ \rho(p_{j-1}) \end{array} \right] < 
\operatorname{dim \ ker} \left[ \begin{array}{c} \rho(p_1) \end{array} \right] = d_i -1.$$
Thus setting
$$B := \left[ \begin{array}{c} \rho(p_1) \\ \rho(p_2) \\ \vdots \\ \rho(p_{d_i}) \end{array} \right] \in \operatorname{Mat}_{d_i \times d_i}(\mathbb{C})$$
we have $\operatorname{dim \ ker}B = 0$ so $B$ is injective.  But from (\ref{cc'}),
$$B \phi_{\lambda,i}^{-1} \phi'_{\lambda,i} = c^{-1}c'B,$$
and since $B$ is injective $\phi_{\lambda,i}^{-1} \phi'_{\lambda,i} = c^{-1}c'\textbf{1}_{d_i}$, so $\phi_{\lambda,i} = cc'^{-1} \phi'_{\lambda,i}$, so $w \in \operatorname{ker} \phi_{0,i} \cap e_iV_z$ if and only if $w \in \operatorname{ker}\phi'_{0,i} \cap e_iV_z$, and thus $\operatorname{ker}\phi_0 = \operatorname{ker}\phi'_0$, proving our claim.

It follows that $V_z/\operatorname{ker}\phi_0 = V_z/\operatorname{ker}\phi'_0$, and so by Lemma \ref{V_0},
$$V_0(\phi_{\lambda}) \cong V_z/\operatorname{ker}\phi_0 = V_z/\operatorname{ker}\phi'_0 \cong V_0(\phi'_{\lambda}).$$
\end{proof}

\begin{Definition} \label{shrinktozerosize} \rm{
Suppose $A$ is module-finite over its noetherian center $Z$, and let $\left\{ [V_z] \right\}$ be a $\mathbb{P}^n$-family where each member has a 1-dimensional socle.  If $V_0 = \bigoplus W_i$ is semisimple with simple summands $W_i$ then we say that the $\mathbb{P}^n$ parameterizing this family shrinks to the points $\operatorname{ann}_A W_i \in \operatorname{Max}A$, and sits over the points $\operatorname{ann}_Z W_i \in \operatorname{Max}Z$.
} \end{Definition}

\begin{Remark} \rm{
In all the examples we will encounter, $V_0$ is the top of each member of its corresponding $\mathbb{P}^n$-family, though in general $V_0$ need not be semisimple.
} \end{Remark}

\begin{figure}
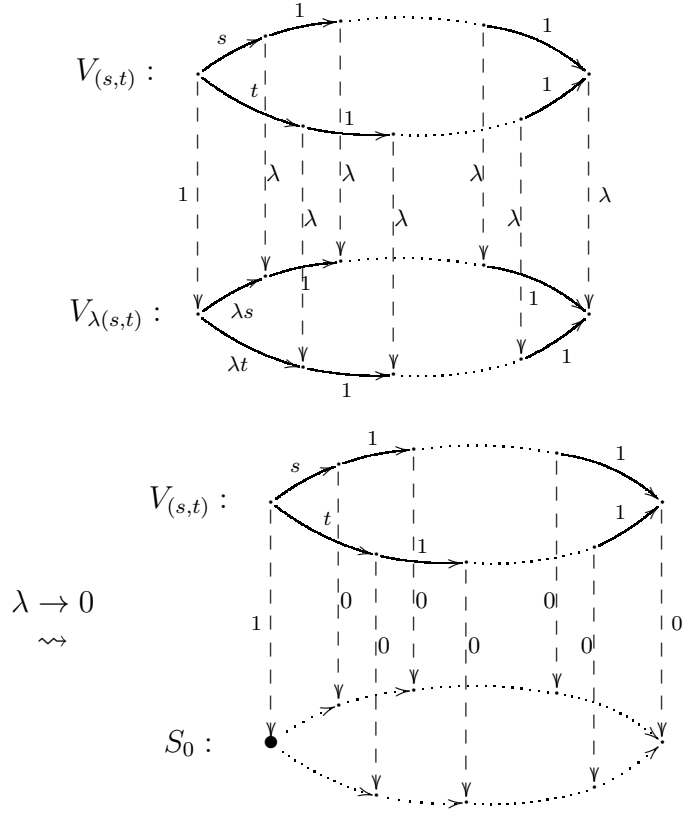

$$\xy
(-26,16)*{\cdot}="1";(-17,21)*{\cdot}="2";(-7,23)*{\cdot}="3";(12,22.5)*{\cdot}="4";(26,16)*{\cdot}="5";(17,10)*{\cdot}="6";(0,8)*{\cdot}="7";(-12,9)*{\cdot}="8";
(-26,-16)*{\cdot}="1'";(-17,-11)*{\cdot}="2'";(-7,-9)*{\cdot}="3'";(12,-9.5)*{\cdot}="4'";(26,-16)*{\cdot}="5'";(17,-22)*{\cdot}="6'";(0,-24)*{\cdot}="7'";(-12,-23)*{\cdot}="8'";
{\ar@/^.1pc/^s"1";"2"};{\ar@/^.1pc/^1"2";"3"};{\ar@{..}@/^.2pc/"3";"4"};{\ar@/^.3pc/^1"4";"5"};
{\ar@/_.1pc/^1"6";"5"};{\ar@{..}@/_.2pc/"7";"6"};{\ar@/_.1pc/^1"8";"7"};{\ar@/_.2pc/^{t}"1";"8"};
{\ar@/^.1pc/_{\lambda s}"1'";"2'"};{\ar@/^.1pc/_{1}"2'";"3'"};{\ar@{..}@/^.2pc/"3'";"4'"};{\ar@/^.3pc/_{1}"4'";"5'"};
{\ar@/_.1pc/_{1}"6'";"5'"};{\ar@{..}@/_.2pc/"7'";"6'"};{\ar@/_.1pc/_{1}"8'";"7'"};{\ar@/_.2pc/_{\lambda t}"1'";"8'"};
{\ar@{-->}_{1}"1";"1'"};{\ar@{-->}"2";"2'"};{\ar@{-->}"3";"3'"};{\ar@{-->}"4";"4'"};{\ar@{-->}^{\lambda}"5";"5'"};{\ar@{-->}"6";"6'"};{\ar@{-->}"7";"7'"};{\ar@{-->}"8";"8'"};
(-16,3)*{\text{\scriptsize{$\lambda$}}}="";(-11,-3)*{\text{\scriptsize{$\lambda$}}}="";(-6,3)*{\text{\scriptsize{$\lambda$}}}="";(1,-3)*{\text{\scriptsize{$\lambda$}}}="";(11,3)*{\text{\scriptsize{$\lambda$}}}="";(16,-3)*{\text{\scriptsize{$\lambda$}}}="";
(-37,16)*{V_{(s,t)}:}="";
(-37,-16)*{V_{\lambda (s,t)}:}="";
\endxy$$
$$\begin{array}{c}
\lambda \rightarrow 0 \\
\leadsto 
\end{array}
\ \ \ \
\xy 
(-26,16)*{\cdot}="1";(-17,21)*{\cdot}="2";(-7,23)*{\cdot}="3";(12,22.5)*{\cdot}="4";(26,16)*{\cdot}="5";(17,10)*{\cdot}="6";(0,8)*{\cdot}="7";(-12,9)*{\cdot}="8";
(-26,-16)*{\bullet}="1'";(-17,-11)*{\cdot}="2'";(-7,-9)*{\cdot}="3'";(12,-9.5)*{\cdot}="4'";(26,-16)*{\cdot}="5'";(17,-22)*{\cdot}="6'";(0,-24)*{\cdot}="7'";(-12,-23)*{\cdot}="8'";
{\ar@/^.1pc/^s"1";"2"};{\ar@/^.1pc/^1"2";"3"};{\ar@{..}@/^.2pc/"3";"4"};{\ar@/^.3pc/^1"4";"5"};
{\ar@/_.1pc/^1"6";"5"};{\ar@{..}@/_.2pc/"7";"6"};{\ar@/_.1pc/^1"8";"7"};{\ar@/_.2pc/^t"1";"8"};
{\ar@{..>}@/^.1pc/"1'";"2'"};{\ar@{..>}@/^.1pc/"2'";"3'"};{\ar@{..}@/^.2pc/"3'";"4'"};{\ar@{..>}@/^.3pc/"4'";"5'"};
{\ar@{..>}@/_.1pc/"6'";"5'"};{\ar@{..}@/_.2pc/"7'";"6'"};{\ar@{..>}@/_.1pc/"8'";"7'"};{\ar@{..>}@/_.2pc/"1'";"8'"};
{\ar@{-->}_{1}"1";"1'"};{\ar@{-->}"2";"2'"};{\ar@{-->}"3";"3'"};{\ar@{-->}"4";"4'"};{\ar@{-->}^{0}"5";"5'"};{\ar@{-->}"6";"6'"};{\ar@{-->}"7";"7'"};{\ar@{-->}"8";"8'"};
(-16,3)*{\text{\scriptsize{$0$}}}="";(-11,-3)*{\text{\scriptsize{$0$}}}="";(-6,3)*{\text{\scriptsize{$0$}}}="";(1,-3)*{\text{\scriptsize{$0$}}}="";(11,3)*{\text{\scriptsize{$0$}}}="";(16,-3)*{\text{\scriptsize{$0$}}}="";
(-37,16)*{V_{(s,t)}:}="";
(-37,-16)*{S_0:}="";
\endxy$$
\caption{ 
The $\mathbb{P}^1$-family $\{[V_{(s,t)}]\}$ shrunk to the vertex simple $[S_0]$ at the bold vertex.  A dotted arrow denotes an arrow represented by zero and a dotted edge denotes some number of arrows.} 
\label{P^1 family}
\end{figure}

\subsection{A first example: the blowup of $\mathbb{C}^n$} \label{A first example}

We now introduce a new noncommutative perspective on the tautological line bundle 
$$\pi: L := \left\{ (x,v) \in \mathbb{P}^{n-1} \times \mathbb{C}^n \ | \ v \in x \right\} \rightarrow \mathbb{C}^n, \ \ \ (x,v) \mapsto v,$$ 
whose total space is $\mathbb{C}^n$ blownup at the origin.  Consider the quiver algebra
\begin{equation} \label{taut}
A:=\mathbb{C}Q/\left\langle \left[c,c'\right] \ | \ c,c' \text{ cycles} \right\rangle
\end{equation}
with quiver given in figure \ref{tlb}.
\begin{figure}
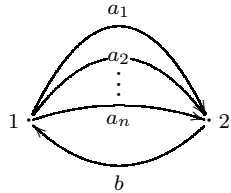

$$\xy
(-12,0)*{\cdot}="1";(12,0)*{\cdot}="2";
{\ar@/^.5pc/_{a_n}"1";"2"};{\ar@/^2pc/|-{a_2}"1";"2"};{\ar@/^1.4pc/^b"2";"1"};
{\ar@/^3pc/^{a_1}"1";"2"};
(-14,0)*{}="1'";(14,0)*{}="2'";
{\ar@{}|-1"1'";"1'"};{\ar@{}|-2"2'";"2'"};
(0,6)*{}="3";{\ar@{}|-{\vdots}"3";"3"};
\endxy$$
\caption{The tautological line bundle quiver.} \label{tlb}
\end{figure}
Recall that $S_i$ denotes the vertex simple at $i \in Q_0$.

\begin{Proposition} \label{first proposition}
Let $A$ be the quiver algebra (\ref{taut}).  The isoclasses of large modules, and almost large modules with socle $S_2$ (resp.\ $S_1$), are parameterized by $\mathbb{C}^n$ blownup at the origin (resp.\ $\mathbb{C}^n$).  Specifically,
\begin{itemize}
 \item the large modules are parameterized by $\mathbb{C}^n \setminus \{0\}$, while
 \item the almost large modules with socle $S_2$ (resp.\ $S_1$) are parameterized by the exceptional divisor $\pi^{-1}(0) = \mathbb{P}^{n-1}$ (resp.\ the single point $0$).
\end{itemize}
\end{Proposition}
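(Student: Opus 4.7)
The plan is to explicitly classify all $A$-modules of dimension vector $d = (1,1)$ up to isomorphism, identify within this classification the simple modules (which will be shown to be the large modules) versus the non-simple ones with socle $S_1$ or $S_2$, and finally glue the resulting parameter spaces to recover the tautological line bundle and the affine $\mathbb{C}^n$.

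A representation with $\operatorname{dim}_{\mathbb{C}} V_1 = \operatorname{dim}_{\mathbb{C}} V_2 = 1$ is specified by scalars $\alpha_1, \dots, \alpha_n, \beta \in \mathbb{C}$ representing $a_1, \dots, a_n, b$, and the cycle-commutation relations hold automatically since all maps are scalars. Change of basis by $(g_1, g_2) \in (\mathbb{C}^*)^2$ sends $(\alpha_i, \beta) \mapsto (g_2 g_1^{-1} \alpha_i,\, g_1 g_2^{-1} \beta)$; since the diagonal acts trivially, iso classes are orbits of a residual $\mathbb{C}^*$-action with weights $(1, \dots, 1, -1)$ on $(\alpha_1, \dots, \alpha_n, \beta)$. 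A routine check shows that $(V_1, 0) \cong S_1$ is a subrepresentation iff every $\alpha_i = 0$, while $(0, V_2) \cong S_2$ is one iff $\beta = 0$. Consequently such a module is simple iff $\beta \neq 0$ and some $\alpha_i \neq 0$, with iso class then determined by the invariants $c_i := \alpha_i \beta$, parameterizing $\mathbb{C}^n \setminus \{0\}$; it is non-simple with $\operatorname{Soc} V \cong S_2$ iff $\beta = 0$ and $\alpha \neq 0$, with iso class $[\alpha_1 : \cdots : \alpha_n] \in \mathbb{P}^{n-1}$; and it is non-simple with $\operatorname{Soc} V \cong S_1$ iff $\alpha = 0$ and $\beta \neq 0$, producing a single iso class.

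The assignment $(\alpha, \beta) \mapsto (\langle \alpha \rangle,\, \alpha\beta)$, defined whenever $\alpha \neq 0$, then descends to a bijection between the union of the simple and $S_2$-socled strata and the total space $L = \{(x, v) \in \mathbb{P}^{n-1} \times \mathbb{C}^n \mid v \in x\}$; it sends the $\mathbb{P}^{n-1}$ of $S_2$-socled modules onto the exceptional divisor $\pi^{-1}(0)$ and the simples onto $L \setminus \pi^{-1}(0) \cong \mathbb{C}^n \setminus 0$. Analogously, the simples (parameterizing $\mathbb{C}^n \setminus 0$) together with the single $S_1$-socled iso class fill in all of $\mathbb{C}^n$, the $S_1$-socled module mapping to the origin.

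The principal technical obstacle is justifying that $d = (1,1)$ is indeed the dimension vector of the large modules, i.e., that no simple $A$-module has $\mathbb{C}$-dimension greater than $2$. This requires identifying the center $Z$ of $A$, expected to be the polynomial ring $\mathbb{C}[c_1, \dots, c_n]$ generated by the invariants above so that $\operatorname{Max} Z \cong \mathbb{C}^n$, and then invoking PI-theory: since $A$ is module-finite over $Z$ and an explicit $2$-dimensional simple quotient has been exhibited at every point of the Azumaya locus $\mathbb{C}^n \setminus 0$, the PI-degree of $A$ equals $2$ and no simple can exceed dimension $2$. A secondary task is, for the path-like set $\mathcal{P} = Q_{\geq 0} \cup \{0\}$, to verify that the $\mathcal{P}$-annihilators of the $(1,1)$-modules with socle $S_1$ or $S_2$ extend to maximal chains of the form required by Definition \ref{almost large}, which is a direct combinatorial check of which paths act as zero in each module.
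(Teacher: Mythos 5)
Your classification of the $(1,1)$-dimensional representations, the identification of the simple ones and of the two non-simple strata by their socles, and the resulting parameter spaces $\mathbb{C}^n\setminus\{0\}$, $\mathbb{P}^{n-1}$, and a point, all agree with the paper's proof; the explicit gluing onto the total space $L$ is a pleasant addition. The problem is that the two items you defer are exactly where the content of the proposition lives, and your sketch of the first one does not work as stated. To bound the dimension of simple modules you propose to exhibit $2$-dimensional simples ``at every point of the Azumaya locus'' and conclude that the PI-degree is $2$; but the Azumaya locus is defined in terms of the maximal dimension $d$ of a simple module, so you cannot know that $\mathbb{C}^n\setminus\{0\}$ lies in it before you know $d=2$ --- exhibiting $2$-dimensional simples only bounds the PI-degree from \emph{below}. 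The needed upper bound comes from the algebra monomorphism $\tau: A \to \operatorname{End}_{\mathbb{C}[z_1,\ldots,z_n]}(\mathbb{C}[z_1,\ldots,z_n]^{\oplus 2})$ (which exists because the relations force the corner rings $e_iAe_i$ to be commutative): an embedding into $2\times 2$ matrices over a commutative ring caps the PI-degree at $2$, and the paper invokes \cite[Proposition 2.9]{B} to get the sharper statement that the dimension vector of any large module is bounded by $(1,1)$. Without some version of this step your argument classifies only the $(1,1)$-modules and does not rule out larger simples.

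The second deferred item is also not purely routine, because ``almost large'' is not synonymous with ``non-simple of dimension vector $(1,1)$ with socle $S_1$ or $S_2$'': Definition \ref{almost large} requires the $\mathcal{P}$-annihilator to occur as the top of a maximal chain of $\mathcal{P}$-annihilators of length $\ell \le \dim Z = n$. For the $S_2$-socled family one must exhibit chains beginning with $P_1(y=0)$; for the unique $S_1$-socled isoclass one needs the full length-$n$ chain $0 \subsetneq P_1(z_1=0) \subsetneq \cdots \subsetneq P_n(z_1=\cdots=z_n=0)$, and in checking maximality one must observe that a module with only some of the $z_i$ equal to zero but $y\neq 0$ is still simple (hence large, hence not almost large). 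These verifications are short but they are part of the proof, and the paper writes them out; your proposal should too.
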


\begin{proof} 
Denote by $Z$ the center of $A$.  The ideal of relations of $A$ is defined so that the corner rings $e_1Ae_1 = Ze_1$, $e_2Ae_2 = Ze_2$ are commutative, and so the algebra homomorphism 
$$\tau: A \rightarrow \operatorname{End}_A\left(\mathbb{C}[z_1,\ldots,z_n]\right)$$
defined by
$$\tau(a_i) = \left[ \begin{array}{cc} 0 & 0 \\ z_i & 0 \end{array} \right], \ \ \ \tau(b) = \left[ \begin{array}{cc} 0 & 1 \\ 0 & 0 \end{array} \right], \ \ \ \tau(e_1) = \left[ \begin{array}{cc} 1 & 0 \\ 0 & 0 \end{array} \right], \ \ \ \tau(e_2) = \left[ \begin{array}{cc} 0 & 0 \\ 0 & 1 \end{array} \right],$$
is a monomorphism.  It then follows from \cite[Proposition 2.9]{B} that the large modules have dimension vector $(1,1)$.  A module $V$ with this dimension vector is simple if and only if there is some $i$ such that $a_i$ and $b$ are represented by nonzero scalars, say $z_i$ and $y$.  However, if $y \not = 0$ then we may assume $y =1$, as shown by the isomorphism (i) in figure \ref{impression iso} (the dashed lines denote the isomorphism parameters between $A$-modules $W$ and $V$, where the resulting ``squares commute'').  Moreover, if two modules $V$ and $V'$ satisfy $y = y'=1$, then $V \cong V'$ if and only if $z_i = z_i'$ for each $i$, and so the large module isoclasses are parameterized by $\mathbb{C}^n \setminus 0$.

Now consider the module isomorphisms (ii) and (iii) in figure \ref{impression iso}, where the dotted arrows denote arrows represented by zero.  Denote by $\mathcal{P}$ the path-like set $Q_{\geq 0} \cup \{ 0 \}$.  For $w_1, \ldots, w_j \in \left\{ y, z_1, \ldots, z_n \right\}$ let $P(w_1 = \cdots = w_j = 0)$ denote the $\mathcal{P}$-annihilator of a module in $\operatorname{Rep}_{(1,1)}A$ with $w_1 = \cdots = w_j = 0$ and all other arrows represented by nonzero scalars.  Note that $\operatorname{dim}Z = n$ since $Z \cong \mathbb{C}[z_1, \ldots, z_n]$.  Then for $1 \leq \ell \leq n$ there is a maximal chain of subsets as in Definition \ref{almost large}, 
$$0 \subsetneq P_1(y=0) \subsetneq P_2(y=z_{i_1}=0) \subsetneq P_3(y=z_{i_1}=z_{i_2}=0) \subsetneq$$
$$\cdots \subsetneq P_{\ell} := P_{\ell}(y=z_{i_1}=z_{i_2}= \cdots = z_{i_{\ell -1}} = 0),$$
so any module whose $\mathcal{P}$-annihilator is $P_{\ell}$ is almost large.  Similarly
$$0 \subsetneq P_1(z_1=0) \subsetneq P_2(z_1 = z_2 = 0) \subsetneq \cdots \subsetneq P' := P_{n}(z_1 = z_2 = \cdots = z_n = 0),$$
so any module whose $\mathcal{P}$-annihilator is $P'$ is also almost large.  Any module whose $\mathcal{P}$-annihilator is $P_{\ell}$ has socle $S_2$ (since $\ell \not = n+1$), and the isoclasses of all such modules forms a $\mathbb{P}^{n-1}$-family since $\lambda \in \operatorname{GL}_1(\mathbb{C}) = \mathbb{C}^*$, which is shown by the module isomorphism (ii) in figure \ref{impression iso}.  Any module whose $\mathcal{P}$-annihilator is $P'$ has socle $S_1$, and there is only one such module up to isomorphism, shown by the module isomorphism (iii) in figure \ref{impression iso}.  In this case $y \in \mathbb{C}^*$, and the $Z$-annihilator of this single isoclass is the maximal ideal $m$ at the origin of $\mathbb{C}^n$.  Note that any module whose $\mathcal{P}$-annihilator is $P(z_1 = \cdots = z_{\ell}=0)$, where $1 \leq \ell \leq n-1$, is large and thus not almost large.

The path-like set $\mathcal{P} = Q_{\geq 0} \cup \{0\}$ is sufficient for determining all almost large modules since the almost large modules with socle $S_1$ or $S_2$ obtained from $Q_{\geq 0} \cup \{0\}$ exhaust the set of all modules in $\operatorname{Rep}_{(1,1)}A$ with socle $S_1$ or $S_2$.
\end{proof}
\begin{figure}
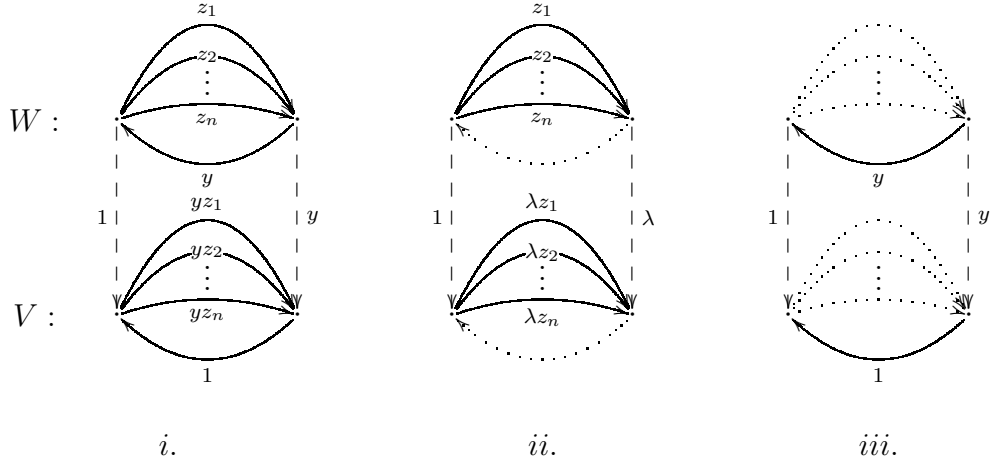

$$\begin{array}{ccccc}
\xy
(-12,13)*{\cdot}="1";(12,13)*{\cdot}="2";
{\ar@/^.5pc/_{z_n}"1";"2"};{\ar@/^2pc/|-{z_2}"1";"2"};{\ar@{->}@/^1.4pc/^y"2";"1"};
{\ar@/^3pc/^{z_1}"1";"2"};
(0,19)*{}="3";{\ar@{}|-{\vdots}"3";"3"};
(-12,-13)*{\cdot}="1b";(12,-13)*{\cdot}="2b";
{\ar@/^.5pc/_{yz_n}"1b";"2b"};{\ar@/^2pc/|-{yz_2}"1b";"2b"};{\ar@{->}@/^1.4pc/^1"2b";"1b"};
{\ar@/^3pc/^{yz_1}"1b";"2b"};
(0,-7)*{}="3b";{\ar@{}|-{\vdots}"3b";"3b"};
{\ar@{-->}_1"1";"1b"};{\ar@{-->}^{y}"2";"2b"};
(-23,13)*{W:}="";(-23,-13)*{V:}="";
\endxy
& \ \ \ \ & 
\xy
(-12,13)*{\cdot}="1";(12,13)*{\cdot}="2";
{\ar@/^.5pc/_{z_n}"1";"2"};{\ar@/^2pc/|-{z_2}"1";"2"};{\ar@{..>}@/^1.4pc/"2";"1"};
{\ar@/^3pc/^{z_1}"1";"2"};
(0,19)*{}="3";{\ar@{}|-{\vdots}"3";"3"};
(-12,-13)*{\cdot}="1b";(12,-13)*{\cdot}="2b";
{\ar@/^.5pc/_{\lambda z_n}"1b";"2b"};{\ar@/^2pc/|-{\lambda z_2}"1b";"2b"};{\ar@{..>}@/^1.4pc/"2b";"1b"};
{\ar@/^3pc/^{\lambda z_1}"1b";"2b"};
(0,-7)*{}="3b";{\ar@{}|-{\vdots}"3b";"3b"};
{\ar@{-->}_1"1";"1b"};{\ar@{-->}^{\lambda}"2";"2b"};
\endxy
& \ \ \ \ & 
\xy
(-12,13)*{\cdot}="1";(12,13)*{\cdot}="2";
{\ar@{..>}@/^.5pc/"1";"2"};{\ar@{..>}@/^2pc/"1";"2"};{\ar@/^1.4pc/^y"2";"1"};
{\ar@{..>}@/^3pc/"1";"2"};
(0,19)*{}="3";{\ar@{}|-{\vdots}"3";"3"};
(-12,-13)*{\cdot}="1b";(12,-13)*{\cdot}="2b";
{\ar@{..>}@/^.5pc/"1b";"2b"};{\ar@{..>}@/^2pc/"1b";"2b"};{\ar@/^1.4pc/^1"2b";"1b"};
{\ar@{..>}@/^3pc/"1b";"2b"};
(0,-7)*{}="3b";{\ar@{}|-{\vdots}"3b";"3b"};
{\ar@{-->}_1"1";"1b"};{\ar@{-->}^{y}"2";"2b"};
\endxy
\\ \\
i. & & ii. & & iii.
\end{array}$$
\caption{Some isomorphic $A$-modules.  Dotted arrows denote arrows represented by zero, and dashed arrows denote isomorphism parameters between $A$-modules.} \label{impression iso}
\end{figure}

We now describe how to shrink the $\mathbb{P}^{n-1}$ to zero size using the noncommutative algebra $A$.  Let $M = \mathbb{C}^n\setminus \{0 \}$, $\mathbb{T} = U(1) \subset \mathbb{C}^*$, and consider the moment map
$$\mu: M \rightarrow g^*= \mathbb{R}$$
defined by
$$\mu(z_1, \ldots, z_n) = \frac 12 \left( |z_1|^2 + \cdots + |z_n|^2 \right).$$
Then
$$\begin{array}{rcl}
\mu^{-1}(1/2)/\mathbb{T} & = & \left\{ (z_1,\ldots, z_n) \in M \ | \ |z_1|^2+ \cdots + |z_n|^2=1 \right\}/\mathbb{T}\\ \\
& = & \left\{ \mathbb{P}^{n-1} \text{ with radius } 1 \right\},\end{array}$$
and more generally
$$\begin{array}{rcl}
\mu^{-1}(|\lambda|^2/2 )/\mathbb{T} & = & \left\{ (\lambda z_1, \ldots, \lambda z_n) \in M \ | \ |z_1|^2 + \cdots + |z_n|^2 = 1 \right\}/\mathbb{T} \\ \\
& = & \left\{ \mathbb{P}^{n-1} \text{ with radius } |\lambda| \right\}.\end{array}$$
Varying $\lambda$ is equivalent to varying the radius of $\mathbb{P}^{n-1}$.  In particular, $\lambda \rightarrow 0$ is equivalent to the radius vanishing, and in our case of interest, the isomorphism (ii) of figure \ref{impression iso} becomes a module epimorphism, given in figure \ref{mono}.
\begin{figure}
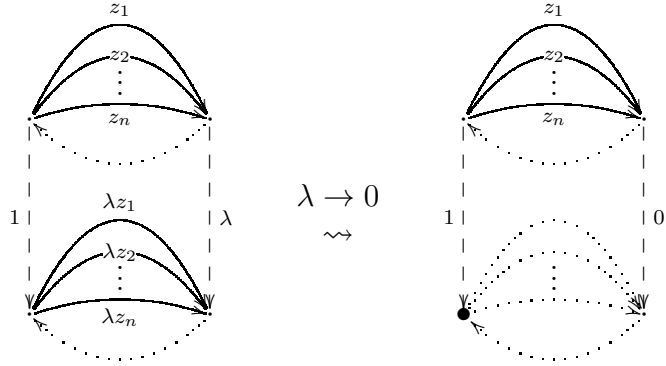

$$\xy
(-12,13)*{\cdot}="1";(12,13)*{\cdot}="2";
{\ar@/^.5pc/_{z_n}"1";"2"};{\ar@/^2pc/|-{z_2}"1";"2"};{\ar@{..>}@/^1.4pc/"2";"1"};
{\ar@/^3pc/^{z_1}"1";"2"};
(0,19)*{}="3";{\ar@{}|-{\vdots}"3";"3"};
(-12,-13)*{\cdot}="1b";(12,-13)*{\cdot}="2b";
{\ar@/^.5pc/_{\lambda z_n}"1b";"2b"};{\ar@/^2pc/|-{\lambda z_2}"1b";"2b"};{\ar@{..>}@/^1.4pc/"2b";"1b"};
{\ar@/^3pc/^{\lambda z_1}"1b";"2b"};
(0,-7)*{}="3b";{\ar@{}|-{\vdots}"3b";"3b"};
{\ar@{-->}_1"1";"1b"};{\ar@{-->}^{\lambda}"2";"2b"};
\endxy 
\ \ \ \
\begin{array}{c}
\lambda \rightarrow 0 \\
\leadsto 
\end{array}
\ \ \ \
\xy
(-12,13)*{\cdot}="1";(12,13)*{\cdot}="2";
{\ar@/^.5pc/_{z_n}"1";"2"};{\ar@/^2pc/|-{z_2}"1";"2"};{\ar@{..>}@/^1.4pc/"2";"1"};
{\ar@/^3pc/^{z_1}"1";"2"};
(0,19)*{}="3";{\ar@{}|-{\vdots}"3";"3"};
(-12,-13)*{\bullet}="1b";(12,-13)*{\cdot}="2b";
{\ar@{..>}@/^.5pc/"1b";"2b"};{\ar@{..>}@/^2pc/"1b";"2b"};{\ar@{..>}@/^1.4pc/"2b";"1b"};
{\ar@{..>}@/^3pc/"1b";"2b"};
(0,-7)*{}="3b";{\ar@{}|-{\vdots}"3b";"3b"};
{\ar@{-->}_1"1";"1b"};{\ar@{-->}^{0}"2";"2b"};
\endxy$$
\caption{Shrinking the exceptional locus to zero size.} \label{mono}
\end{figure}
The vertex simple $S_1$, which is not an almost large module, may therefore be viewed as the $\mathbb{P}^{n-1}$ shrunk to zero size.  Note that $S_1$ is the top of every module in the $\mathbb{P}^{n-1}$-family.  Moreover, even though this module corresponds to a point at the origin of $\mathbb{C}^n$, it is \textit{not} the module (isoclass) corresponding to the actual origin of $\mathbb{C}^n$, namely the isoclass given in (iii) of figure \ref{impression iso}.

\subsubsection{Socle vs.\ top}

In Conjecture \ref{conjecture} we made a choice of restricting our attention to almost large modules with isomorphic 1-dimensional socles rather than isomorphic 1-dimensional tops.  These two choices--either fixing the socle or fixing the top--appear equally suitable for the examples we will encounter in section \ref{Res sing}, but they are not equal in regards to the noncommutative tautological line bundle algebra $A$ defined in (\ref{taut}).  For consider the geometric interpretation of projective dimension: if $R$ is the (commutative) coordinate ring for an algebraic variety and $p \in \operatorname{Spec}R$ is smooth, then the projective dimension of $R_p/p_p$ equals the codimension of $p$ (that is, the codimension of the irreducible subvariety defined by $p$).  Therefore since  $\operatorname{pd}_A(S_1) = n$ and $\operatorname{pd}_A(S_2) = 1$, $S_1$ should be viewed as a zero-dimensional point in $\operatorname{Max}A$ while $S_2$ should be viewed as an $(n-1)$-dimensional ``point''.\footnote{Given any almost large module $W$ with socle $S_2$, there exists minimal projective resolutions of $W$ and the vertex simple $S_1$ that are identical except for a factor of $b$ that ``switches sides'' in the first two connecting maps.  For an explicit example, consider $n =3$.  The homomorphism $Ae_1 \otimes e_1V \stackrel{\delta}{\longrightarrow} V$, $\delta(c \otimes v)= cv$, is a projective cover for both $V=W$ and $V = S_1$.  Let $I \subset Ae_1$ be the left ideal such that $\operatorname{ker}\delta_0 = I \otimes e_1V$; then if $V=W$ (resp.\ $V=S_1$),
$$I = \left\langle c_i:=x_ia_{i+1}-x_{i+1}a_i, \ ba_i \ | \ i =1,2,3 \right\rangle = \left\langle c_1,c_2, ba_1 \right\rangle$$ $$\left(\text{resp.\ } I = \left\langle a_1, a_2, a_3 \right\rangle = \left\langle c_1, c_2, a_1 \right\rangle \right).$$
The sequence
$$0 \rightarrow Ae_2 \otimes V 
\stackrel{\cdot \left[ \begin{array}{ccc} a_1b & c_2b & c_1b \end{array}\right] \otimes 1}{\longrightarrow}
\left( Ae_1\right)^{\oplus 3} \otimes V 
\stackrel{ \cdot \left[ \begin{array}{ccc} c_2b & -c_1b & 0 \\ -a_1b & 0 & c_1\beta_2 \\ 0 & a_1b & -c_2\beta_2 \end{array} \right] \otimes 1}{\longrightarrow}$$
$$\left( \left( Ae_2 \right)^{\oplus 2} \oplus Ae_1 \right) \otimes V \stackrel{\cdot \left[ \begin{array}{c} c_1 \\ c_2 \\ \beta_1a_1 \end{array} \right] \otimes 1}{\longrightarrow} 
Ae_1 \otimes V 
\stackrel{\delta}{\longrightarrow} V \rightarrow 0$$
is a minimal projective resolution of $V=W$ (resp.\ $V=S_1$) when 
$$\left(\beta_2,\beta_1\right) = \left\{ \begin{array}{cc} (1,b) & \text{ if } V =W \\ (b,1) & \text{ if } V=S_1 \end{array} \right..$$
However, for any $n$ the projective dimension of the vertex simple $S_2$ is only 1, 
$$0 \rightarrow Ae_1 \otimes e_2S_2 \stackrel{\cdot c \otimes 1}{\longrightarrow} Ae_2 \otimes e_2S_2 \stackrel{\delta_0}{\longrightarrow} S_2 \rightarrow 0.$$}
  It follows that if the $\mathbb{P}^{n-1}$ shrinks to a zero-dimensional point, then it should shrink to $S_1$ and not $S_2$.

\section{$\mathbb{P}^n$-families} \label{Explicit Methods}

\subsection{Determining $\mathbb{P}^n$-families} \label{Determining P^n families}

We now give an explicit method for determining a $\mathbb{P}^n$-family of module isoclasses over a quiver algebra $A= kQ/I$.  Recall the notation of Definition \ref{P^n family}.\\
\\
\indent 1. \textit{Fix the support of $\sigma$.}  This may be done efficiently by fixing a \textit{pulled-apart} supporting subquiver $\widetilde{Q}$ of $Q$; given a representation $\rho: A \rightarrow \operatorname{Mat}_d(\mathbb{C})$, or the corresponding $A$-module $\mathbb{C}^d$, the quiver $\widetilde{Q}$ is defined by
$$\begin{array}{ccl}
\widetilde{Q}_0 & = & \left\{ 1, \ldots, \operatorname{rank}\rho(1) \right\},\\
\widetilde{Q}_1 & = & \bigsqcup_{a \in Q_1} \left\{ i \rightarrow j \ | \ (\rho(a))_{ji} \not = 0 \right\},
\end{array}$$
where $(\rho(a))_{ji}$ is the $ji$-th entry of the matrix $\rho(a)$.  Note that this quiver depends on a choice of basis for $\mathbb{C}^d$.  If $\rho$ has dimension vector $(1, \ldots, 1)$, then $\widetilde{Q}_0=Q_0$.

For fixed $\widetilde{Q}$, define the ideal $J_0 \subset \mathbb{C}[x_a]:=\mathbb{C}[x_a \ | \ a \in \widetilde{Q}_1 ]$ so that the map 
\begin{equation}
\label{sigma_0} 
\sigma_0: A \rightarrow \operatorname{Mat}_d(\mathbb{C}[x_a]/J_0), \ \ \ \ 
\sigma_0(a) := \left\{ \begin{array}{ll} x_aE_a & \text{ if } \ a \in \widetilde{Q}_1, \\ E_a & \text{ if } \ a \in \widetilde{Q}_0, \end{array} \right.
\end{equation}
is an algebra monomorphism, where for a path $a$ in $\widetilde{Q}$, $E_a$ denotes the matrix with a $1$ in the $(\operatorname{h}(a), \operatorname{t}(a))$-th slot and zeros elsewhere.\\
\\
\indent 2. \textit{Trivialize the ideal $J_0$.}  Suppose $\widetilde{Q}$ is a pulled-apart subquiver of $Q$ (with respect to some basis) that contains a sink at $0 \in \widetilde{Q}_0$.  We apply the following iterative procedure on $n$ to trivialize the ideal $J_0$ in (\ref{sigma_0}).  For $n \geq 1$, define
\begin{equation}
\label{sigma'}
\sigma_n: A \longrightarrow \operatorname{Mat}_d(\mathbb{C}[x_a]/J_n)
\end{equation}
as follows:

If $n =1$, let $i = 0$.

Suppose $b \in \widetilde{Q}_1e_i$.  If for each $a \in \widetilde{Q}_1e_i$ there is some $\alpha_{a} \in \mathbb{C}$ such that $x_b = \alpha_{a} x_a$ (modulo $J_{n-1}$) (in particular, if $\widetilde{Q}_1e_i = \{b\}$), then set
\begin{equation}
\label{sigma,J}\begin{array}{rcl}
\sigma_n(a) & := & \left\{ \begin{array}{cl} \alpha_{a} E_a & \text{ if } a \in \widetilde{Q}_1e_i \\ x_aE_a & \text{ otherwise } \end{array} \right.\\
J_n & := & \left\langle I_n, x_a \ | \ a \in \widetilde{Q}_1e_i \right\rangle,
\end{array}
\end{equation}
where the ideal $I_n$ is defined so that (\ref{sigma'}) is an algebra monomorphism.  Otherwise do nothing.  

Next, if $e_i\widetilde{Q}_1$ is non-empty, choose $a \in e_i\widetilde{Q}_1$ and set $j:= \operatorname{t}(a) \in \widetilde{Q}_0$.  Otherwise choose any vertex $j$ where there exists an $a \in \widetilde{Q}_1e_j$ such that $\sigma_n(a) = x_a E_a$ and $\sigma_n(b) \propto E_b$ for all $b \in \widetilde{Q}_1e_{\operatorname{h}(a)}$ (the latter condition is trivially satisfied if $j$ is a sink).

Repeat this process with $i = j$ until there does not exist such a $j$, and denote the final representation by
$$\sigma: A \longrightarrow \operatorname{Mat}_d(\mathbb{C}[x_a]/J).$$

In the examples we will consider, we will find that $\mathbb{C}[x_a]/J \cong \mathbb{C}[t_1, \ldots, t_{m}]$ for some $m$.  The following lemma says that when this is the case, it possible that the family of all modules supported on $\widetilde{Q}$ forms a $\mathbb{P}^{m-1}$-family.  Denote by $\epsilon_z: \operatorname{Mat}_d(\mathbb{C}[x_a]/J_n) \longrightarrow \operatorname{Mat}_d((\mathbb{C}[x_a]/J_n)/(x_a-z_a)) \cong \operatorname{Mat}_d(\mathbb{C})$ the evaluation map at the point $z = (z_a)_{a \in \widetilde{Q}_1} \in \mathbb{C}^{|\widetilde{Q}_1|}$.

\begin{Lemma} \label{rho cong}
If $\rho$ is a representation of $A$ with pulled-apart supporting subquiver $\widetilde{Q}$, then there exists a point $z \in (\mathbb{C}^*)^{|\widetilde{Q}_1|}$ such that 
$$\rho \cong \epsilon_z \cdot \sigma.$$
\end{Lemma}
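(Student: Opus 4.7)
The plan is to proceed by induction on the iteration step $n$ of the procedure, simultaneously constructing a point $z^{(n)} \in (\mathbb{C}^*)^{|\widetilde{Q}_1|}$ and an $A$-module isomorphism $g^{(n)}\colon \rho \xrightarrow{\cong} \epsilon_{z^{(n)}} \sigma_n$ realised by a diagonal change of basis $g^{(n)} \in \prod_{k \in \widetilde{Q}_0} \operatorname{GL}_{d_k}(\mathbb{C})$.

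For the base case $n=0$, I would set $z^{(0)}_a := \rho(a)_{\operatorname{h}(a), \operatorname{t}(a)}$, which is the unique nonzero entry of $\rho(a)$ guaranteed by the pulled-apart hypothesis on $\widetilde{Q}$; hence $z^{(0)} \in (\mathbb{C}^*)^{|\widetilde{Q}_1|}$. Since $\rho$ annihilates the defining relations of $A$, the tuple $z^{(0)}$ lies in the vanishing locus of $J_0$, so $\epsilon_{z^{(0)}}$ descends to $\operatorname{Mat}_d(\mathbb{C}[x_a]/J_0)$, and an entrywise comparison gives $\epsilon_{z^{(0)}}\sigma_0(a) = z^{(0)}_a E_a = \rho(a)$ for every arrow; take $g^{(0)} = \operatorname{Id}$.

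For the inductive step, when the procedure selects a vertex $i$ and a pivot $b \in \widetilde{Q}_1 e_i$ with $x_b \equiv \alpha_a x_a \pmod{J_{n-1}}$ for every $a \in \widetilde{Q}_1 e_i$, interpret the transition $\sigma_{n-1} \rightsquigarrow \sigma_n$ as a scalar gauge transformation at vertex $i$ by the nonzero factor $z^{(n-1)}_b$. The proportionality forces $z^{(n-1)}_b$ and the various $z^{(n-1)}_a$ to be related by a single multiplicative scalar independent of $a$, so one such rescaling simultaneously normalises every arrow in $\widetilde{Q}_1 e_i$ to the canonical value prescribed by $\sigma_n(a)$; the arrows in $e_i \widetilde{Q}_1$ pick up a compensating factor, from which one reads off the updated coordinates of $z^{(n)}$. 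Compose $g^{(n-1)}$ with this rescaling to obtain $g^{(n)}$. All new coordinates of $z^{(n)}$ remain in $\mathbb{C}^*$ because $z^{(n-1)}_b$ and each $\alpha_a$ are nonzero (if some $\alpha_a$ vanished, then $z^{(n-1)}_b = 0$, contradicting $b \in \widetilde{Q}_1$). After the finitely many steps terminate, one has the desired isomorphism $\rho \cong \epsilon_z \sigma$ with $z \in (\mathbb{C}^*)^{|\widetilde{Q}_1|}$.

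The main obstacle is verifying coherence at the inductive step: one must check that the single scalar rescaling at $i$ normalises every arrow of $\widetilde{Q}_1 e_i$ at once to the prescribed $\sigma_n$-values, and that the new ideal $J_n$ is precisely what is required for $\sigma_n$ to remain an algebra monomorphism. The first follows directly from the uniform proportionality $x_b \equiv \alpha_a x_a$ secured by the procedure's hypothesis, while the second is built into the definition of $I_n$ in (\ref{sigma,J}). The remainder is bookkeeping: tracking how the updated coordinates of $z^{(n)}$ are computed from those of $z^{(n-1)}$ and confirming they stay in $\mathbb{C}^*$, both of which reduce again to the pulled-apart hypothesis on $\widetilde{Q}$.
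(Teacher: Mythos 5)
Your proposal is correct and follows essentially the same route as the paper: identify $\rho$ with $\epsilon_{z^{0}}\cdot\sigma_0$ by reading off the nonzero entries of the $\rho(a)$, then realize each step $\sigma_{n-1}\rightsquigarrow\sigma_n$ of the trivialization as a scalar change of basis at the single vertex being processed, with the adjacent coordinates absorbing the (nonzero) rescaling factor, and compose. The paper writes the explicit update formula for $z'$ where you leave it as bookkeeping, but the argument is the same.
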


\begin{proof} Clearly there exists some $z \in (\mathbb{C}^*)^{|\widetilde{Q}_1|}$ such that $\rho = \epsilon_{z} \cdot \sigma_0$.  We claim that given any point $z \in (\mathbb{C}^*)^{|\widetilde{Q}_1|}$ there exists a point $z' \in (\mathbb{C}^*)^{|\widetilde{Q}_1|}$ such that
\begin{equation}
\label{iso sigma}
\epsilon_{z} \cdot \sigma_{n-1} \cong \epsilon_{z'} \cdot \sigma_{n}.
\end{equation}
Let $b \in \widetilde{Q}_1$ be such that $\sigma_{n-1}(b)=bE_b$ and $\sigma_{n}(b)= E_b$, and set
$$z'_a := \left\{ \begin{array}{ll} z_bz_a & \text{ if } \ a \in \widetilde{Q}_1e_{\operatorname{h}(b)} \\ z_b^{-1}z_a & \text{ if } \ a \in e_{\operatorname{h}(b)}\widetilde{Q}_1 \\ z_a & \text{ otherwise} \end{array} \right.$$
In particular, $z'_b=1$.  The isomorphism (\ref{iso sigma}) then follows from the definition of $\sigma_n$ (\ref{sigma,J}), explicitly given by $\operatorname{diag}(1, \ldots, 1, z_a^{-1},1, \ldots, 1) \in \operatorname{GL}_d(\mathbb{C})$, with $z_a^{-1}$ in the $\operatorname{h}(a)$-th slot.  Schematically, there is an isomorphism of representations:
$$\xy
(-13,7)*{\cdot}="1";(-13,-7)*{\cdot}="2";(0,0)*{\cdot}="3";(13,0)*{\cdot}="4";
{\ar^{x_a}"1";"3"};{\ar^{x_c}"3";"2"};{\ar^{x_b}"3";"4"};
\endxy
\ \ \ \cong \ \ \
\xy
(-13,7)*{\cdot}="1";(-13,-7)*{\cdot}="2";(0,0)*{\cdot}="3";(13,0)*{\cdot}="4";
{\ar^{x_b x_a}"1";"3"};{\ar^{x_b^{-1} x_c}"3";"2"};{\ar^{1}"3";"4"};
\endxy$$
Consequently there is some $z^0, z^1, \ldots, z^N \in \mathbb{C}^{|\widetilde{Q}_1|}$ such that
$$\rho = \epsilon_{z^0} \cdot \sigma_0 \cong \epsilon_{z^1} \cdot \sigma_1 \cong \ \ \cdots \ \ \cong \epsilon_{z^N} \cdot \sigma.$$
\end{proof}

3. \textit{Solve the isomorphism parameters.}  Suppose that $\mathbb{C}[x_{\alpha}]/J \cong \mathbb{C}[t_1, \ldots, t_{m}]$.  Set $\phi: V_{(t_1, \ldots, t_{m})} \stackrel{\cong}{\longrightarrow} V_{(\lambda_1 t_1, \ldots, \lambda_{m} t_{m})}$, so that $(t_1, \ldots, t_{m}) \sim (\lambda_1 t_1, \ldots, \lambda_{m} t_{m})$, and solve the relations among the $\lambda_i$.

In the following example we demonstrate how to ``solve the isomorphism parameters'' to show that a family of modules is a $\mathbb{P}^1$-family.

\begin{Example} \rm{Consider the family of modules over the path algebra given in the second column of figure \ref{s,t}.iii.  To show that this is a $\mathbb{P}^1$-family we need to show that $\lambda = \mu$.  Denote the isomorphism parameters by
$$1,f,g \in \operatorname{GL}_1(\mathbb{C}), \ \ \left[ \begin{array}{cc} a & b \\ c & d \end{array} \right] \in \operatorname{GL}_2 (\mathbb{C}),$$
at the respective vertices $1,2,3,4 \in Q_0$; we then solve for these parameters by requiring that the relevant ``squares commute'':
$$\begin{array}{rcl}
\left[ \begin{array}{cc} 1 & 0 \end{array} \right]f = \left[ \begin{array}{cc} 1 & 0 \end{array} \right]\left[ \begin{array}{cc} a & b \\ c & d \end{array} \right] 
& \Rightarrow & b = 0 \text{ and } f = a\\ \\
\left[ \begin{array}{cc} a & 0 \\ c & d \end{array} \right] \left[ \begin{array}{c} 0 \\ 1 \end{array} \right] = \left[ \begin{array}{c} 0 \\ 1 \end{array} \right] f & \Rightarrow & d = f (=a)\\ \\
\left[ \begin{array}{cc} 0 & 1 \end{array} \right]g = \left[ \begin{array}{cc} 1 & 0 \end{array} \right]\left[ \begin{array}{cc} a & 0 \\ c & a \end{array} \right] 
& \Rightarrow & c = 0 \text{ and } g = a\\ \\
\left[ \begin{array}{cc} a & 0 \\ 0 & a \end{array} \right] \left[ \begin{array}{c} s \\ t \end{array} \right] = 1 \left[ \begin{array}{c} \lambda s \\ \mu t \end{array} \right] 
& \Rightarrow & \lambda = a = \mu
\end{array}$$
} \end{Example}

\begin{figure}
$$\begin{array}{|l|c|c|}
\hline
\begin{array}{l}
(i) \\
\xy
(0,-10)*{\cdot}="1";(-10,0)*{\cdot}="2";(0,10)*{\cdot}="3";(10,0)*{\cdot}="4";
{\ar^{s}"1";"2"};
{\ar^1"2";"3"};
{\ar_{t}"1";"4"};
{\ar_1"4";"3"};
{\ar@{..}"1";"3"};
\endxy
\end{array}
&
\begin{array}{c}
\xy
(-19,0)*{\cdot}="1";
(0,0)*{\cdot}="2";
(19,0)*{\cdot}="3";
{\ar@/^/^{\left[ \begin{array}{c} 1 \\ 0 \end{array} \right]}"1";"2"};
{\ar@/^/^{\left[ \begin{array}{cc} 0 & s \end{array} \right]}"2";"1"};
{\ar@/_/_{\left[ \begin{array}{c} 1 \\ 0 \end{array} \right]}"3";"2"};
{\ar@/_/_{\left[ \begin{array}{cc} 0 & t \end{array} \right]}"2";"3"};
\endxy
\cong
\xy
(-19,-1)*{\cdot}="1";
(0,-1)*{\cdot}="2";
(19,-1)*{\cdot}="3";
{\ar@/^/^{\left[ \begin{array}{c} 1 \\ 0 \end{array} \right]}"1";"2"};
{\ar@/^/^{\left[ \begin{array}{cc} 0 & \lambda s \end{array} \right]}"2";"1"};
{\ar@/_/_{\left[ \begin{array}{c} 1 \\ 0 \end{array} \right]}"3";"2"};
{\ar@/_/_{\left[ \begin{array}{cc} 0 & \mu t \end{array} \right]}"2";"3"};
\endxy
\\
\Rightarrow \ \lambda = \mu
\end{array}
&
\mathbb{P}^1
\\
\hline
\begin{array}{l}
(ii) \\
\xy
(0,-10)*{\cdot}="1";(-10,0)*{\cdot}="2";(0,10)*{\cdot}="3";(10,0)*{\cdot}="4";
{\ar^{s}"1";"2"};
{\ar^1"2";"3"};
{\ar_{t}"1";"4"};
{\ar_1"4";"3"};
{\ar@{..}"2";"4"};
\endxy
\end{array}
&
\begin{array}{c}
\xy
(-19,0)*{\cdot}="1";
(0,0)*{\cdot}="2";
(19,0)*{\cdot}="3";
{\ar^{\left[ \begin{array}{c} s \\ t \end{array} \right]}"1";"2"};
{\ar^{\left[ \begin{array}{cc} 1 & 1 \end{array} \right]}"2";"3"};
\endxy
\cong
\xy
(-19,0)*{\cdot}="1";
(0,0)*{\cdot}="2";
(19,0)*{\cdot}="3";
{\ar^{\left[ \begin{array}{c} \lambda s \\ \mu t \end{array} \right]}"1";"2"};
{\ar^{\left[ \begin{array}{cc} 1 & 1 \end{array} \right]}"2";"3"};
\endxy
\\
\Rightarrow \ \lambda = \mu
\end{array}
&
\mathbb{P}^1
\\
\hline
\begin{array}{l}
(iii) \\
\xy
(0,-5)*{\cdot}="1";(-10,5)*{\cdot}="2";(0,15)*{\cdot}="3";(10,5)*{\cdot}="4";
(0,-15)*{\cdot}="5";
{\ar^{s}"2";"3"};
{\ar^1"3";"4"};
{\ar_{t}"2";"1"};
{\ar^1"4";"1"};
{\ar@{..}"1";"3"};
{\ar^{1}"1";"5"};
\endxy
\end{array}
&
\begin{array}{c}
\xy
(-19,0)*+{\text{\scriptsize{$1$}}}="1";
(0,0)*+{\text{\scriptsize{$4$}}}="2";
(19,0)*+{\text{\scriptsize{$3$}}}="3";
(0,-15)*+{\text{\scriptsize{$2$}}}="4";
{\ar^{\left[ \begin{array}{c} s \\ t \end{array} \right]}"1";"2"};
{\ar@/_/_{\left[ \begin{array}{c} 0 \\ 1 \end{array} \right]}"3";"2"};
{\ar@/_/_{\left[ \begin{array}{cc} 1 & 0 \end{array} \right]}"2";"3"};
{\ar_{\left[ \begin{array}{cc} 0 & 1 \end{array} \right]}"2";"4"};
\endxy
\cong
\xy
(-19,0)*{\cdot}="1";
(0,0)*{\cdot}="2";
(19,0)*{\cdot}="3";
(0,-15)*{\cdot}="4";
{\ar^{\left[ \begin{array}{c} \lambda s \\ \mu t \end{array} \right]}"1";"2"};
{\ar@/_/_{\left[ \begin{array}{c} 0 \\ 1 \end{array} \right]}"3";"2"};
{\ar@/_/_{\left[ \begin{array}{cc} 1 & 0 \end{array} \right]}"2";"3"};
{\ar_{\left[ \begin{array}{cc} 0 & 1 \end{array} \right]}"2";"4"};
\endxy
\\
\Rightarrow \ \lambda = \mu
\end{array}
&
\mathbb{P}^1
\\
\hline
\begin{array}{l}
(iv) \\
\xy
(0,-5)*{\cdot}="1";(-10,5)*{\cdot}="2";(0,15)*{\cdot}="3";(10,5)*{\cdot}="4";
(0,-15)*{\cdot}="5";
{\ar^{s}"2";"3"};
{\ar^1"3";"4"};
{\ar_{t}"2";"1"};
{\ar^1"4";"1"};
{\ar@{..}"1";"3"};
{\ar_{1}"5";"1"};
\endxy
\end{array}
&
\begin{array}{c}
\xy
(-19,0)*{\cdot}="1";
(0,0)*{\cdot}="2";
(19,0)*{\cdot}="3";
(0,-15)*{\cdot}="4";
{\ar^{\left[ \begin{array}{c} s \\ t \end{array} \right]}"1";"2"};
{\ar@/_/_{\left[ \begin{array}{c} 0 \\ 1 \end{array} \right]}"3";"2"};
{\ar@/_/_{\left[ \begin{array}{cc} 1 & 0 \end{array} \right]}"2";"3"};
{\ar^{\left[ \begin{array}{cc} 0 & 1 \end{array} \right]}"4";"2"};
\endxy
\cong
\xy
(-19,0)*{\cdot}="1";
(0,0)*{\cdot}="2";
(19,0)*{\cdot}="3";
(0,-15)*{\cdot}="4";
{\ar^{\left[ \begin{array}{c} \lambda s \\ \mu t \end{array} \right]}"1";"2"};
{\ar@/_/_{\left[ \begin{array}{c} 0 \\ 1 \end{array} \right]}"3";"2"};
{\ar@/_/_{\left[ \begin{array}{cc} 1 & 0 \end{array} \right]}"2";"3"};
{\ar^{\left[ \begin{array}{cc} 0 & 1 \end{array} \right]}"4";"2"};
\endxy
\\
\Rightarrow \ \lambda \in \mathbb{C}^* \text{ and } \mu \in \mathbb{C} \text{ whenever } s \not = 0
\end{array}
&
\begin{array}{c}
\text{ two points: }\\
s=0, s \not = 0
\end{array}
\\
\hline
\end{array}$$
\caption{Examples of modules over path algebras, with $s,t \in \mathbb{C}$ not both zero, are given in the middle column, and their corresponding pulled-apart quivers (with respect to the standard basis) are given in the left column.  Vertices in the pulled-apart quiver connected by a dotted edge correspond to the same vertex in the quiver itself.  In (i) - (iii), the coordinates $(s:t)$ parameterize $\mathbb{P}^1$-families of almost large modules that will appear in section \ref{D_n example}, and (iv) is cautionary.}
\label{s,t}
\end{figure}

\subsection{Coordinates on resolved singularities via impressions} \label{Coordinates on P^n families}

In this section we recall the definition of an impression, a notion the author introduced in \cite[section 2.1]{B}.  An impression may be thought of as a way of placing (commutative) coordinates within an algebra that is module-finite over its center.

\begin{Definition/Lemma} \label{impression} \cite[Definition 2.1]{B}
Let $k$ be an algebraically closed field, and let $A$ be a f.g.\ $k$-algebra, module-finite over its center $Z$.  Suppose that there exists a commutative noetherian reduced $k$-algebra $B$, an open dense subset $U \subseteq \operatorname{Max}B$, and an algebra momomorphism $\tau: A \rightarrow \operatorname{End}_B \left(B^d \right)$ such that the composition
$$\tau_m: A \stackrel{\tau}{\longrightarrow} \operatorname{End}_B\left( B^d \right) \stackrel{ \epsilon_m}{\longrightarrow} \operatorname{End}_B\left( (B/m)^d \right) \cong \operatorname{End}_k \left( k^d \right)$$
is a large representation of $A$ for each $m \in U$. Then
\begin{equation} \label{U}
Z \cong \left\{f \in B \ | \ f1_d \in \operatorname{im}\tau \right\} \subset B.
\end{equation}
If the induced morphism of varieties
\begin{equation} \label{U'} 
\operatorname{Max}B \stackrel{\phi}{\rightarrow} \operatorname{Max}Z
\end{equation}
is surjective, then we call $(\tau,B)$ an \rm{impression} \textit{of $A$.}
\end{Definition/Lemma}

The following demonstrates the utility of an impression.

\begin{Proposition} \label{impression large} \cite[Proposition 2.5]{B} Let $(\tau, B)$ be an impression of a prime algebra $A$.  If $V$ is a large $A$-module, then there is some $r \in \operatorname{Max}B$ such that $V \cong (B/r)^{d}$.
\end{Proposition}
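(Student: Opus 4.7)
The strategy is to locate a maximal ideal $r \in \operatorname{Max}B$ lying over the $Z$-annihilator of $V$ and then argue, using the Azumaya property at that point, that the induced representation $\tau_r$ must yield a module isomorphic to $V$. In particular, I will not need $r$ to lie in the open dense subset $U$ of Definition/Lemma \ref{impression}; the Azumaya-locus structure does all the work.

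First, set $n := \operatorname{ann}_Z V \in \operatorname{Max}Z$. Since $V$ is large and $A$ is prime, \cite[Theorem 4.2.7]{Smith} places $n$ in the Azumaya locus, so $A/An \cong \operatorname{Mat}_d(k)$ and $V$ is (up to isomorphism) the unique simple $A/An$-module, of $k$-dimension $d$. Next, the isomorphism (\ref{U}) realizes $Z$ as a subring of $B$ via $\tau$, with $\tau(z) = z \cdot 1_d$ for every $z \in Z$. Surjectivity of the induced morphism $\phi: \operatorname{Max}B \to \operatorname{Max}Z$ then produces some $r \in \operatorname{Max}B$ with $r \cap Z = n$.

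Setting $W := (B/r)^d$ with the $A$-action given by $\tau_r$, I observe that for each $z \in n$ one has $\tau_r(z) = 0$ in $\operatorname{End}_k(k^d)$, since $z \in r$ and $\tau(z) = z \cdot 1_d$. Hence $An \subseteq \operatorname{ann}_A W$, so $W$ descends to a module over the quotient $A/An \cong \operatorname{Mat}_d(k)$. The module $W$ is nonzero because $\tau_r(1_A) = 1_{\operatorname{Mat}_d(k)}$, and $\dim_k W = d$. Any nonzero $\operatorname{Mat}_d(k)$-module of $k$-dimension $d$ is isomorphic to the unique simple $\operatorname{Mat}_d(k)$-module, namely the one to which $V$ is also isomorphic; therefore $V \cong W = (B/r)^d$.

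The main obstacle, such as it is, is a conceptual one: to see that the argument does not require $r$ to lie in $U$. Once this is noticed, the proof is essentially a two-line application of the Azumaya-locus characterization combined with the surjectivity of $\phi$ and the scalar-on-$Z$ identification (\ref{U}). A secondary point to verify carefully is that the chosen $n$ really does lie in the Azumaya locus, which is automatic from the definition of largeness and \cite[Theorem 4.2.7]{Smith}, so that $A/An$ really is a full matrix algebra.
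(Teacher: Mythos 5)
Your proof is correct. The present paper does not actually prove this proposition---it is imported verbatim from \cite[Proposition 2.5]{B}---but your argument is the natural one and uses precisely the ingredients the paper makes available: the Azumaya-locus characterization of large modules from \cite[Theorem 4.2.7]{Smith} (which puts $n=\operatorname{ann}_ZV$ in the Azumaya locus, so $A/An\cong\operatorname{Mat}_d(k)$ has a unique simple module of dimension $d$), the scalar identification of $Z$ inside $B$ from (\ref{U}), and the surjectivity of $\phi$ in (\ref{U'}) to produce $r$ over $n$. Your remark that $r$ need not lie in the open set $U$ is also right: simplicity of $(B/r)^{d}$ is forced by the matrix-algebra quotient at $n$, not by membership in $U$.
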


Now let $A = kQ/I$ be a quiver algebra.  For $i \in Q_0$, set $d_i := \operatorname{rank}\tau(e_i)$.  If $a \in e_jAe_i$ for some $i,j \in Q_0$, then we denote by $\bar{\tau}(a)$ the restriction of $\tau(a)$ to 
\begin{equation} \label{bar}
B^{d_i} \cong \tau(e_i)B^{d} \rightarrow B^{d_j} \cong \tau(e_j)B^{d}.
\end{equation}
For example, if the large $A$-modules have dimension vector $(1, \ldots, 1)$, then $\bar{\tau}(a) \in B$ whenever $a \in e_jAe_i$.  In sections \ref{The conifold}, \ref{Cyclic quotient surface singularities}, and \ref{A non-isolated quotient singularity}, we will consider quiver algebras that admit impressions $(\tau,B)$ satisfying\footnote{Let $A$ be a quiver algebra and suppose that $B$, $\tau$, and $U$ are as in Definition \ref{impression} with $d< \infty$, but without requiring $A$ be module-finite over its center or that $\phi$ exists.  It was shown \cite[Theorem 2.7]{B} that if (\ref{B}) holds, then $A$ and its center $Z$ are both noetherian rings, $A$ is a finitely generated $Z$-module, and 
$$Z = k \left[ \sum_{i \in Q_0} \gamma_i \in \bigoplus_{i \in Q_0}e_iAe_i \ | \ \bar{\tau}(\gamma_i)= \bar{\tau}(\gamma_j) \text{ for each } i,j \in Q_0 \right].$$
Moreover, if we only assume that there is an algebra monomorphism $\tau: A \rightarrow \operatorname{End}_B(B^d)$ such that (\ref{B}) holds, then the dimension vector $d$ of any large $A$-module is bounded by $d \leq (1,\ldots,1)$ \cite[Proposition 2.9]{B}.}
\begin{equation} \label{B}
\bar{\tau}(e_iAe_i) = \bar{\tau}(e_jAe_j) \subset B \ \text{ for each } \ i,j \in Q_0.
\end{equation}
In each of these examples, $(\tau,B)$ determines a structure sheaf $\mathcal{O}_X$ on the parameterizing space $X$ of isoclasses of large modules and almost large modules with fixed vertex simple socle, that coincides precisely with the structure sheaf obtained by blowing up the singularity.  The construction of $\mathcal{O}_X$ from $(\tau,B)$ is as follows.

For each $x \in X$, let $Q(x)$ denote the supporting subquiver of $x$, and for each Zariski-open affine subset $U \subset X$, set 
$$Q(U) := \bigcap_{x \in U} Q(x) \subseteq Q.$$
Define the new quiver
$$Q'(U) := \left\{ \begin{array}{ccl} Q_0'(U) & = & Q_0, \\ Q'_1(U) & = & Q_1 \cup \left\{ \operatorname{h}(a) \stackrel{a^*}{\longrightarrow} \operatorname{t}(a) \ | \ a \in Q_1(U) \right\}, \end{array} \right.$$
which contains $Q$ as a subquiver, and set 
$$A(U) := kQ'(U)/\left\langle I, \ aa^* - e_{\operatorname{h}(a)}, \ a^*a - e_{\operatorname{t}(a)} \ | \ a \in Q_1(U) \right\rangle,$$
which contains $A$ as a subalgebra.  Extend $\tau: A \rightarrow \operatorname{Mat}_d(B)$ to an algebra monomorphism 
$$\tau : A(U) \longrightarrow \operatorname{Mat}_d\left( \operatorname{Frac}(B) \right)$$
defined by
\begin{equation} \label{a^*}
\begin{array}{rcccll} 
\tau(a) & := & \tau(a) & = & \bar{\tau}(a) E_{\operatorname{h}(a),\operatorname{t}(a)} & \text{ for } a \in Q_1,\\
\tau\left( a^* \right) & := & & & \bar{\tau}(a)^{-1} E_{\operatorname{t}(a), \operatorname{h}(a)} & \text{ for } a \in Q_1(U),
\end{array}
\end{equation}
where $E_{ij}$ denotes the matrix whose $ij$th entry is $1$, and zeros elsewhere.  We may then define the structure sheaf $\mathcal{O}_X$ induced by the impression $(\tau,B)$ to be
\begin{equation} \label{O}
\mathcal{O}_X(U):= \bar{\tau}\left( e_i A(U) e_i \right).
\end{equation}

\begin{Remark} \rm{If the dimension vector of the large modules over a quiver algebra is not $(1, \ldots, 1)$ then it is not immediately clear how to generalize this construction, specifically (\ref{a^*}), since in general $\bar{\tau}(a)$ may not be invertible.
}\end{Remark}

\begin{figure}
$$\begin{array}{ccc}
\xy
(6.235,7.818)*{\cdot}="2";(-2.225,9.750)*{\circ}="1";(-9.010,4.339)*{\cdot}="7";(-9.010,-4.339)*{\bullet}="6";(-2.225,-9.750)*{\cdot}="5";(6.235,-7.818)*{\cdot}="4";(10,0)*{\cdot}="3";
{\ar@{<-}^x"1";"2"};{\ar@{<-}^x"2";"3"};{\ar@{<-}^x"3";"4"};{\ar@{<-}^x"4";"5"};{\ar@{<-}^x"5";"6"};{\ar@{}"6";"7"};{\ar@{}"7";"1"};
{\ar@{<-}_y"1";"7"};{\ar@{<-}_y"7";"6"};{\ar@{}"6";"5"};{\ar@{}"5";"4"};{\ar@{}"4";"3"};{\ar@{}"3";"2"};{\ar@{}"2";"1"};
\endxy
& \ \ \ \cong \ \ \ &
\xy
(6.235,7.818)*{\cdot}="2";(-2.225,9.750)*{\circ}="1";(-9.010,4.339)*{\cdot}="7";(-9.010,-4.339)*{\bullet}="6";(-2.225,-9.750)*{\cdot}="5";(6.235,-7.818)*{\cdot}="4";(10,0)*{\cdot}="3";
{\ar@{<-}^1"1";"2"};{\ar@{<-}^1"2";"3"};{\ar@{<-}^1"3";"4"};{\ar@{<-}^1"4";"5"};{\ar@{<-}^{s=x^5}"5";"6"};{\ar@{}"6";"7"};{\ar@{}"7";"1"};
{\ar@{<-}_1"1";"7"};{\ar@{<-}_{t=y^2}"7";"6"};{\ar@{}"6";"5"};{\ar@{}"5";"4"};{\ar@{}"4";"3"};{\ar@{}"3";"2"};{\ar@{}"2";"1"};
\endxy
\\
\\
(i) \ \ \ \text{coordinates } (x^5:y^2) & & (ii) \ \ \ \text{coordinates } (s:t)\\
\ \ \ \ \ \ \ \mathcal{O}_X(U):= \bar{\tau}\left( e_i A(U) e_i \right) & &
\ \ \ \ \ \ \ \ \ \ \ \ \ \ \ \ \sigma: A \rightarrow \operatorname{End}_{\mathbb{C}[s,t]}\left( \mathbb{C}[s,t]^{\oplus 7} \right)
\end{array}$$
\caption{Isomorphic labeling of arrows for the supporting subquiver $\widetilde{Q}^{5}$ of the $\mathbb{P}^1$-family of modules over the $A_7$ preprojective algebra given in figure \ref{cyclic example} below.  (i) determines coordinates of the $\mathbb{P}^1$-family from an impression of the $A_7$ preprojective algebra, and hence coordinates related to the singularity $\mathbb{C}^2/\rho(\mu_7)$, while (ii) specifies the $\mathbb{P}^1$-family (Definition \ref{P^n family}) and is necessary for the intersections of the different $\mathbb{P}^1$-families to be parameterized by the intersections of the corresponding $\mathbb{P}^1$'s in the minimal resolution of $\mathbb{C}^2/\rho(\mu_7)$.
}
\label{}
\end{figure}

\section{Resolving singularities} \label{Res sing}

In this section we verify Conjecture \ref{conjecture} in a number of examples.  In these examples the noncommutative algebra is the path algebra of a McKay quiver, modulo relations.  The McKay quiver $Q$ of a group $G$ and representation $\rho: G \rightarrow \operatorname{GL}_n(\mathbb{C})$ is defined to have a vertex for each irreducible representation $\phi_0, \phi_1, \ldots, \phi_m$ of $G$, and an arrow from $j$ to $i$ for each direct summand of $\phi_j$ in $\rho \otimes_{\mathbb{C}} \phi_i$.  In the special cases $\rho: G \rightarrow \operatorname{SL}_2(\mathbb{C})$, $Q$ is the double of any quiver whose underlying graph is the extended Dynkin graph of $G$, and McKay observed that this is the dual graph of the exceptional locus of the minimal resolution of $\mathbb{C}^2/\rho(G)$.  Our program extends this correspondence by realizing the vertex simples at the vertices of the McKay quiver as the respective irreducible components of the exceptional locus shrunk to (smooth) point-like spheres. 

\subsection{The conifold} \label{The conifold}

The well-known quiver algebra for the conifold (quadric cone) $R:=\mathbb{C}\left[ xz, xw, yz, yw \right] \cong \mathbb{C}\left[ s,t,u,v \right]/(sv-tu)$ is
$$A:=\mathbb{C}Q/\left\langle a_ib_ja_k-a_kb_ja_i, b_ia_jb_k-b_ka_jb_i \ | \ i,j,k = 1,2 \right\rangle$$
with quiver given in figure \ref{conifold}.i.  Since $A$ is a square superpotential algebra, by \cite[Theorem 3.7]{B} $A$ admits an impression $\left(\tau, \mathbb{C}\left[x,y,z,w \right] \right)$, where $\tau$ is defined by the labeling of arrows in figure \ref{conifold}.ii, namely,
\begin{equation} \label{conifold impression}
\begin{array}{c}
\tau(a_1) = \left[ \begin{array}{cc} 0 & 0 \\ x & 0 \end{array} \right], \ \ \
\tau(a_2) = \left[ \begin{array}{cc} 0 & 0 \\ y & 0 \end{array} \right], \ \ \
\tau(b_1) = \left[ \begin{array}{cc} 0 & z \\ 0 & 0 \end{array} \right], \ \ \
\tau(b_2) = \left[ \begin{array}{cc} 0 & w \\ 0 & 0 \end{array} \right], \\ \\
\tau(e_1) = \left[ \begin{array}{cc} 1 & 0 \\ 0 & 0 \end{array} \right], \ \ \ \
\tau(e_2) = \left[ \begin{array}{cc} 0 & 0 \\ 0 & 1 \end{array} \right].
\end{array}
\end{equation}
The center $Z$ of $A$ is isomorphic to $R$, and the non-Azumaya locus of $A$ is the unique singular point $0 \in \operatorname{Max}R$ \cite[Theorem 6.5]{B}.  $\operatorname{Max}R$ admits two crepant resolutions $\pi_{\pm}: Y^{\pm} \rightarrow \operatorname{Max}R$ given by the two birational transforms (with $s' = 1$),
$$sv- tu = s(v-t'u), \ \ \ \ \ \ \ sv-tu = s(v-tu').$$
The exceptional locus $\pi^{-1}(0)$ is given by $v-t'u=0$ (resp.\ $v-tu'=0$) with $s=t=u=v=0$, so since 
$s' (xw)= s't = st' = (xz) t'$ (resp.\ $s' (yz) = s'u = su'= (xz) u'$), the ratios $t'/s' = w/z$ (resp.\ $u'/s'=y/x$) are free to vary.  Thus in terms of the original coordinates $x,y,z,w$, $\pi_+^{-1}(0)= \mathbb{P}^1$ has coordinates $(z:w)$, while $\pi_-^{-1}(0) = \mathbb{P}^1$ has coordinates $(x:y)$.  We now show that these coordinates agree with those obtained from the almost large $A$-modules.
\begin{figure}
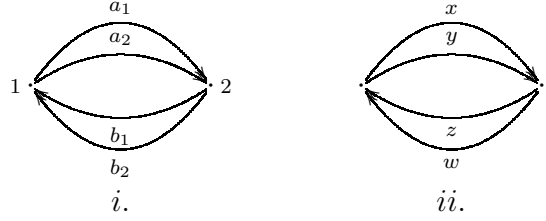

$$\begin{array}{ccc}
\xy
(-12,0)*{\cdot}="1";(12,0)*{\cdot}="2";
{\ar@/^2pc/^{a_1}"1";"2"};{\ar@/^1pc/^{a_2}"1";"2"};
{\ar@/^1pc/^{b_1}"2";"1"};{\ar@/^2pc/^{b_2}"2";"1"};
(-14,0)*{}="1'";(14,0)*{}="2'";
{\ar@{}|-1"1'";"1'"};{\ar@{}|-2"2'";"2'"};
\endxy
& \ \ \ \ \ \ \ & 
\xy
(-12,0)*{\cdot}="1";(12,0)*{\cdot}="2";
{\ar@/^2pc/^x"1";"2"};{\ar@/^1pc/^y"1";"2"};
{\ar@/^1pc/^z"2";"1"};{\ar@/^2pc/^w"2";"1"};
\endxy
\\
i. & & ii.
\end{array}$$
\caption{The conifold quiver and its impression.} \label{conifold}
\end{figure}

\begin{Proposition}
Let $A$ be the conifold quiver algebra.  Then the large $A$-module isoclasses are parameterized by the smooth locus of $\operatorname{Max}R$, while the almost large module isoclasses with socle $S_2$ (resp.\ $S_1$) are parameterized by the exceptional locus $\pi_-^{-1}(0) = \mathbb{P}^1$ (resp.\ $\pi_+^{-1}(0)$), having coordinates $(x:y)$ (resp.\ $(z:w)$).  Moreover, the coordinates on $Y^{\pm}$ obtained from the impression $(\tau,\mathbb{C}[x,y,z,w])$, namely (\ref{O}), agree with those obtained by blowing up.
\end{Proposition}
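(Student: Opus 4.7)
The plan is to follow the template of Proposition \ref{first proposition} together with the impression machinery of Section \ref{Coordinates on P^n families}, adapted to the conifold's two-sided quiver.

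First I would use the impression $(\tau,\mathbb{C}[x,y,z,w])$ to identify $\operatorname{Rep}_{(1,1)}A$ with $\mathbb{C}^4$: a module of dimension vector $(1,1)$ is determined by four scalars $x_i := \rho(a_i)$ and $y_j := \rho(b_j)$, and both families of conifold relations hold automatically since the scalars commute. By \cite[Proposition 2.9]{B}, cited in the footnote attached to (\ref{B}), the large modules have dimension vector exactly $(1,1)$. Two such modules are isomorphic via $(\alpha,\beta) \in \operatorname{GL}_1 \times \operatorname{GL}_1$ exactly when $(x_i', y_j') = (\lambda x_i, \lambda^{-1} y_j)$ for $\lambda := \beta/\alpha \in \mathbb{C}^*$, and a direct submodule argument shows $V$ is simple iff some $x_i$ and some $y_j$ are nonzero. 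The $\mathbb{C}^*$-invariants $x_iy_j$ match the generators $xz, xw, yz, yw$ of $R$ subject to $(x_1y_1)(x_2y_2) = (x_1y_2)(x_2y_1)$, so the map $[V] \mapsto \operatorname{ann}_ZV$ identifies large-module isoclasses with the smooth locus $\operatorname{Max}R \setminus \{0\}$ of the quadric cone.

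Next, I would observe that modules in $\operatorname{Rep}_{(1,1)}A$ with socle $S_2$ are precisely those with $y_1 = y_2 = 0$ and $(x_1,x_2) \neq (0,0)$, and symmetrically for $S_1$. For the path-like set $\mathcal{P} = Q_{\geq 0} \cup \{0\}$ I would exhibit maximal chains
$$0 \subsetneq P_1(y_1=0) \subsetneq P_2(y_1=y_2=0) \subsetneq P_3(y_1=y_2=x_k=0)$$
of length up to $\dim Z = 3$, verifying parts (1) and (2) of Conjecture \ref{conjecture}: the component $\pi_-^{-1}(0)$ corresponds to $P_2$, whose predecessor $P_1$ is the annihilator of a large module, and this is the only $\mathcal{P}$-annihilator meeting those requirements with socle $S_2$. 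For part (3), applying the procedure of Section \ref{Determining P^n families} to the supporting subquiver (only $a_1,a_2$ survive) and solving the isomorphism parameters yields a $\mathbb{P}^1$-family with coordinates $(x_1{:}x_2) = (x{:}y)$; the deeper annihilator $P_3$ cuts out the single point $(1{:}0)$ or $(0{:}1)$, confirming the codimension count. The socle-$S_1$ case is mirror-symmetric and produces coordinates $(y_1{:}y_2) = (z{:}w)$ on $\pi_+^{-1}(0)$.

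Finally, for the coordinate-ring agreement I would invoke the construction (\ref{O}). Let $X$ be the parameterizing space of large modules together with socle-$S_2$ almost large modules, and cover $X$ by two opens $U_1, U_2$ on which $a_1, a_2$ respectively are everywhere nonzero. On $U_1$, only $a_1$ lies in $Q_1(U_1)$, since $a_2$ vanishes at $(1{:}0) \in \pi_-^{-1}(0)$ and $b_1, b_2$ vanish throughout the exceptional $\mathbb{P}^1$; the adjoined arrow $a_1^*$ satisfies $\bar\tau(a_1^*) = 1/x$, and $e_1A(U_1)e_1$ is generated by $\bar\tau(b_ja_i) = x_iy_j$ together with $\bar\tau(a_1^*a_2) = y/x$, yielding $\mathbb{C}[xz, xw, y/x]$. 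This is exactly the coordinate ring of the standard affine chart of the small resolution $Y^-$, obtained either as the GIT quotient of $\mathbb{C}^4$ by the $\mathbb{C}^*$-action of weights $(1,1,-1,-1)$ or as the birational transform described in the introductory paragraph of this subsection. Chart $U_2$ is analogous, and the $Y^+$ case is symmetric with the roles of the $a$'s and $b$'s swapped. I expect the main technical obstacle to be pinning down that the supporting subquivers $Q(U_i)$ dictated by the almost-large locus agree precisely with the two standard affine charts of $Y^-$; the remaining algebraic identities are direct computations analogous to those in the proof of Proposition \ref{first proposition}.
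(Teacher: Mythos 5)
Your proposal is correct and follows essentially the same route as the paper: dimension vector $(1,1)$ from the impression, the maximal chains $0\subsetneq P_1(z{=}0)\subsetneq P_2(z{=}w{=}0)\subsetneq P_3$ for the path-like set $Q_{\geq 0}\cup\{0\}$, the resulting $\mathbb{P}^1$-families with coordinates $(x{:}y)$ and $(z{:}w)$, and the chart comparison via (\ref{O}). The only differences are that you argue the smooth-locus parameterization directly rather than citing \cite[Theorem 6.5]{B}, and you carry out the chart computation $\mathcal{O}_X(U_1)=\mathbb{C}[xz,xw,y/x]$ explicitly where the paper leaves it implicit; both elaborations are sound.
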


\begin{proof} 
The fact that the large modules are parameterized by the smooth locus follows from \cite[Theorem 6.5]{B}.  By \cite[Theorem 3.7]{B} the large modules have dimension vector $(1,1)$.  Denote by $\mathcal{P}$ the path-like set $Q_{\geq 0} \cup \{0\}$.  As in the proof of Proposition \ref{first proposition}, for $w_1, \ldots, w_j \in \left\{ y, z_1, \ldots, z_n \right\}$ let $P(w_1 = \cdots = w_j = 0)$ denote the $\mathcal{P}$-annihilator of a module in $\operatorname{Rep}_{(1,1)}A$ with $w_1 = \cdots = w_j = 0$ and all other arrows represented by nonzero scalars.  Noting that $\operatorname{dim}Z = 3$, there is then a maximal chain as in Definition \ref{almost large},
$$0 \subsetneq P_1(z = 0 ) \subsetneq P_2(z= w= 0) \subsetneq P_3(z = w = x = 0),$$
so any module with $\mathcal{P}$-annihilator $P(z= w=0)$, $P(z= w=x=0)$, or $P(z=w=y=0)$ is almost large with socle $S_2$, and similarly any module with $\mathcal{P}$-annihilator $P(x= y=0)$, $P(x= y=z=0)$, or $P(x=y=w=0)$ is almost large with socle $S_1$.  These two families of almost large modules form $\mathbb{P}^1$-families (recall the module isomorphism (ii) in figure \ref{impression iso}), with respective coordinates $(x:y)$ and $(z:w)$ determined from (\ref{O}) and the impression of $A$ given by (\ref{conifold impression}); see figure \ref{conifold resolution}.  The path-like set $\mathcal{P} = Q_{\geq 0} \cup \{0\}$ is sufficient for determining all almost large modules since the almost large modules with socle $S_1$ or $S_2$ obtained from $Q_{\geq 0} \cup \{0\}$ exhaust the set of all modules in $\operatorname{Rep}_{(1,1)}A$ with socle $S_1$ or $S_2$.
\begin{figure}
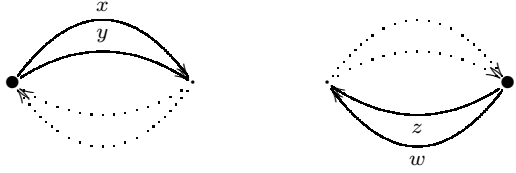

$$\begin{array}{ccc}
\xy
(-12,0)*{\bullet}="1";(12,0)*{\cdot}="2";
{\ar@/^2pc/^x"1";"2"};{\ar@/^1pc/^y"1";"2"};
{\ar@{..>}@/^2pc/"2";"1"};{\ar@{..>}@/^1pc/"2";"1"};
\endxy
& \ \ \ \ \ \ \ & 
\xy
(-12,0)*{\cdot}="1";(12,0)*{\bullet}="2";
{\ar@{..>}@/^2pc/"1";"2"};{\ar@{..>}@/^1pc/"1";"2"};
{\ar@/^1pc/^z"2";"1"};{\ar@/^2pc/^w"2";"1"};
\endxy
\end{array}$$
\caption{The exceptional loci of the two crepant resolutions of the conifold.  Each $\mathbb{P}^1$ shrinks to the vertex simple at the bold vertex.} \label{conifold resolution}
\end{figure}
\end{proof}

\subsection{Cyclic quotient surface singularities} \label{Cyclic quotient surface singularities}

Consider the linear action of the finite abelian group $\mu_r = \left\langle g \right\rangle$ of order $r$ on $\mathbb{C}[x,y]$ by the representation
$$\rho(g) = \left[ \begin{array}{cc} e^{2 \pi i/r} & 0 \\ 0 & e^{2 \pi i b/r} \end{array} \right],$$
that is, $g \cdot (x,y) = \left( e^{2 \pi i/r}x, e^{2 \pi i b/r}y \right)$.  The ring of invariants $R := \mathbb{C}[x,y]^{\rho(\mu_r)}$ is the coordinate ring for the cyclic quotient surface singularity $\mathbb{C}^2/\rho(\mu_r) := \operatorname{Max}R$ of type $\frac 1r (1,b)$.  We suppose $\mu_r$ acts freely on $\mathbb{C}^2 \setminus 0$, and so we take $\operatorname{gcd}(r,b)=1$, thus neglecting quasi-reflections.  We will find that the minimal resolution $Y \rightarrow \mathbb{C}^2/\rho(\mu_r)$ of such a singularity (the total number of irreducible components of the exceptional locus and the coordinates on each component) can be read off directly from the associated McKay quiver: this information is simply hidden within the quiver, and is extracted by determining the supporting subquivers of the almost large modules over the McKay quiver algebra. 

\begin{Lemma} \label{cyclic impression} 
Let $Q$ be the McKay quiver of $(\mu_r, \rho)$, so for each $i \in Q_0 = \{1, \ldots, r\}$ there are arrows
$$e_i \stackrel{a_i}{\longrightarrow} e_{i+1}, \ \ \ \ e_i \stackrel{b_i}{\longrightarrow}e_{i+b}.$$
Then the associated McKay quiver algebra
$$A:=\mathbb{C}Q/\left\langle b_{i+1}a_i- a_{i+r}b_i \ | \ i \in Q_0 \right\rangle$$
admits an impression $\left(\tau, \mathbb{C} \left[ x,y \right] \right)$, where $\tau$ is defined by the labeling 
\begin{equation} \label{cyclic label}
\bar{\tau}(a_i)=x, \ \ \ \ \bar{\tau}(b_i)=y,
\end{equation}
for each $i \in Q_0$.
\end{Lemma}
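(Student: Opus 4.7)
The plan is to verify the conditions of Definition/Lemma \ref{impression} for $B = \mathbb{C}[x,y]$ and the matrix representation $\tau : A \to \operatorname{Mat}_r(B)$ defined by
\[
\tau(e_i) = E_{i,i},\qquad \tau(a_i) = xE_{i+1,\,i},\qquad \tau(b_i) = yE_{i+b,\,i},
\]
with all vertex indices read modulo $r$. First I would check that $\tau$ respects the McKay relation: using the paper's right-to-left composition convention, $\tau(b_{i+1}a_i) = yE_{i+b+1,\,i+1}\cdot xE_{i+1,\,i} = xy\,E_{i+b+1,\,i}$ and $\tau(a_{i+b}b_i) = xE_{i+b+1,\,i+b}\cdot yE_{i+b,\,i} = xy\,E_{i+b+1,\,i}$, which match by commutativity of $x$ and $y$. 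For injectivity I would use the relation to bring any path into a normal form $b^{t}a^{s}e_{i}$ (all $a$-arrows innermost, all $b$-arrows outermost); this gives a spanning set of $A$ indexed by triples $(i,s,t) \in Q_0 \times \mathbb{Z}_{\geq 0}^2$, whose images $x^s y^t E_{i+s+tb,\,i}$ are distinguished either by matrix position or by monomial, hence linearly independent.

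Next I would choose an open dense $U \subset \operatorname{Max}B = \mathbb{C}^2$ on which $\tau_m$ is large. Take $U := \{(x_0,y_0) : x_0 y_0 \neq 0\}$. For such $m$, the matrices $\tau_m(a_i)$ and $\tau_m(b_i)$ are nonzero scalar multiples of $E_{i+1,\,i}$ and $E_{i+b,\,i}$, so on the standard basis of $\mathbb{C}^r$ they act as invertible cyclic shifts by $1$ and by $b$; because $\gcd(r,b)=1$, these shifts generate the full cyclic action, so $\mathbb{C}^r$ admits no nontrivial $A$-submodule and $\tau_m$ is simple of $\mathbb{C}$-dimension $r$. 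To conclude $r$ is the \emph{maximum}, I would verify the hypothesis (\ref{B}) of the footnoted Proposition~2.9 of \cite{B}: the corner $e_iAe_i$ is spanned by cycles at $i$, which map under $\bar\tau$ to the monomials $\{x^sy^t : s + tb \equiv 0 \pmod r\}$, a set independent of $i$ and equal to $R = \mathbb{C}[x,y]^{\rho(\mu_r)}$. Thus (\ref{B}) holds, and the cited proposition bounds the dimension vector of every large module above by $(1,\ldots,1)$, giving $d = r$.

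Finally I would confirm (\ref{U}) and the surjectivity of $\phi$. An element $f \in B$ satisfies $f\cdot 1_r \in \operatorname{im}\tau$ if and only if $f$ lies in $\bar\tau(e_iAe_i)$ for every $i$; by the previous paragraph this is exactly $R$, and conversely each invariant monomial $x^sy^t$ is realized as $\sum_{i \in Q_0} \tau(c_i)$ where $c_i$ is any cycle at $i$ composed of $s$ arrows of type $a$ and $t$ of type $b$. This gives $Z \cong R$ in the form required by (\ref{U}), and the induced morphism $\phi : \mathbb{C}^2 \to \operatorname{Max}R$ is the quotient map $\mathbb{C}^2 \to \mathbb{C}^2/\rho(\mu_r)$, which is surjective by construction of the invariant ring. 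The main obstacle I anticipate is the largeness assertion: exhibiting a simple representation of dimension $r$ is immediate from the cyclic-shift argument, but controlling the dimension from above genuinely requires (\ref{B}) plus the earlier result, so verifying (\ref{B}) carefully is the one technical step on which the rest hinges.
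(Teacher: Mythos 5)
Your proposal is correct and follows essentially the same route as the paper's (much terser) proof: define $\tau$ by the labeling, establish injectivity, invoke \cite[Proposition 2.9]{B} via condition (\ref{B}) to cap the dimension vector at $(1,\ldots,1)$, exhibit simplicity on an open dense $U$ using coprimality of $r$ and $b$, and identify $Z$ with the invariant ring to get surjectivity of $\phi$. The only differences are cosmetic — you take $U=\{xy\neq 0\}$ where the paper takes $\mathbb{C}^2\setminus 0$ (both work), and your explicit normal-form argument for injectivity fills in a step the paper leaves to the commutativity of the corner rings.
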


\begin{proof}
Since the corner rings $e_iAe_i$ are commutative, the algebra homomorphism $\tau: A \rightarrow \operatorname{End}_{\mathbb{C}[x,y]}(\mathbb{C}[x,y])$ defined by (\ref{cyclic label}) is a monomorphism.  Thus the large $A$-modules have dimension vector $(1, \ldots, 1)$ by \cite[Proposition 2.9]{B}.  Take $U = \mathbb{C}^2 \setminus 0$.  Since $r,b$ are coprime, $V_{\tau_m}$ will be a large module for each $m \in U$.  Since $Z \cong \mathbb{C}[x,y]^{\mu_r}$, the canonical morphism $\phi: \operatorname{Max}B \rightarrow \operatorname{Max}Z$ is a surjection.
\end{proof}

The following theorem extends the fact that the large $A$-modules are parameterized by the smooth locus of $\mathbb{C}^2/\rho(\mu_r)$.

\begin{Theorem} \label{cyclic theorem} Let $\mathbb{C}^2/\rho(\mu_r)$ be a cyclic quotient surface singularity, $Y \rightarrow \mathbb{C}^2/\rho(\mu_r)$ its minimal (Hirzebruch-Jung) resolution, and $A$ the associated McKay quiver algebra.  Then for each $i \in Q_0$, the set of almost large modules with socle $S_i$ are parameterized by the exceptional locus of $Y$.  Moreover, the coordinates on $Y$ obtained from the impression $(\tau,\mathbb{C}[x,y])$, namely (\ref{O}), agree with those obtained from the Hirzebruch-Jung resolution.
\end{Theorem}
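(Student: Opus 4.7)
The plan is to adapt the strategy of Proposition \ref{first proposition} and the conifold argument, now indexed by the combinatorics of the Hirzebruch--Jung resolution of $\frac{1}{r}(1,b)$. By Lemma \ref{cyclic impression} and \cite[Proposition 2.9]{B}, every large $A$-module has dimension vector $(1,\ldots,1)$, hence so does every almost large module. Fix $i \in Q_0$. My first step would be to classify the pulled-apart supporting subquivers $\widetilde{Q}$ that carry a non-simple dimension-$(1,\ldots,1)$ module with socle $S_i$: since the dimension vector is $(1,\ldots,1)$, each arrow is represented by a scalar, and such a $\widetilde{Q}$ is determined by its set of zero arrows. The socle condition forces $i$ to be the unique sink of $\widetilde{Q}$, and every other vertex must admit a nonzero path in $\widetilde{Q}$ to $i$.

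Next I would identify the irreducible components of the exceptional locus combinatorially. Under the impression (\ref{cyclic label}), a path of $k$ $a$-arrows and $m$ $b$-arrows is labelled by $x^k y^m$ and goes from vertex $j$ to $j+k+mb \pmod{r}$. Thus the paths from a given source $j$ to the sink $i$ are in bijection with lattice points $(k,m) \in \mathbb{Z}_{\geq 0}^2$ satisfying $k + mb \equiv i - j \pmod{r}$. The set of such lattice points spans a Newton region whose boundary lattice points (between $(r,0)$ and $(0,r)$) are precisely the data indexing the chain of exceptional $\mathbb{P}^1$'s in the Hirzebruch--Jung resolution, as encoded by the negative continued fraction expansion of $r/(r-b)$. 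I would then exhibit, for each adjacent pair of boundary lattice points, a minimal subquiver $\widetilde{Q}_\ell$ supporting a $\mathbb{P}^1$-family with socle $S_i$: namely, choose a source $j_\ell$ together with the two paths realizing these two adjacent boundary vectors $(k_\ell, m_\ell)$ and $(k_\ell', m_\ell')$, and take $\widetilde{Q}_\ell$ to be the minimal subquiver containing them.

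Step three is to run the algorithm of Section \ref{Determining P^n families} on each $\widetilde{Q}_\ell$. Solving the isomorphism parameters yields projective coordinates $\left( s_\ell : t_\ell \right) = \left( x^{k_\ell} y^{m_\ell} : x^{k_\ell'} y^{m_\ell'} \right)$ on the resulting $\mathbb{P}^1$-family, and via (\ref{O}) these are precisely the affine coordinates of the $\ell$-th Hirzebruch--Jung blowup chart. The chain structure then follows because adjacent $\widetilde{Q}_\ell$ and $\widetilde{Q}_{\ell+1}$ share a source-sink configuration with a strictly larger $\mathcal{P}$-annihilator, corresponding to a codimension-$2$ chain $P_1 \subsetneq P_2$ as in Definition \ref{almost large}, realized geometrically as the intersection point of the two $\mathbb{P}^1$'s. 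Exhaustiveness, i.e.\ that the $\widetilde{Q}_\ell$ account for every almost large module with socle $S_i$, follows since any other supporting subquiver either factors through one of the $\widetilde{Q}_\ell$ (its $\mathcal{P}$-annihilator being strictly larger) or is incompatible with the cycle relation $b_{i+1}a_i = a_{i+r}b_i$.

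The main obstacle will be the combinatorial bookkeeping of step two: verifying that the minimal subquivers $\widetilde{Q}_\ell$ are in bijection with the interior boundary lattice points of the Newton region, with no duplication or omission. A particular subtlety is that any closed cycle appearing in $\widetilde{Q}$ must, by the relation $b_{i+1}a_i = a_{i+r}b_i$, be represented by the same scalar regardless of how the $a$'s and $b$'s are interleaved, which severely constrains the allowable shapes of $\widetilde{Q}$. Once this classification and its matching with the Hirzebruch--Jung data is established, the coordinate identification is immediate from the impression.
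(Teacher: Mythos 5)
There is a genuine gap, and it sits exactly where you predicted: the combinatorial dictionary of your step two is the wrong one. You form the Newton region of exponent vectors $(k,m)$ of paths into the sink and claim its boundary lattice points, governed by the continued fraction of $r/(r-b)$, index the exceptional curves. But that polygon is the $M$-lattice (monomial) picture: its boundary lattice points count the generators of the invariant ring, not the components of the resolution. The exceptional curves correspond to the \emph{interior boundary lattice points of the dual polygon}, namely $\operatorname{conv}\bigl(L \cap \mathbb{R}_{\geq 0}^{2} \setminus \{0\}\bigr)$ for $L = \mathbb{Z}^2 + \mathbb{Z}\cdot\frac{1}{r}(1,b)$, governed by $r/b$ --- and this is the polygon the paper uses. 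The counts genuinely differ: for $\frac{1}{7}(1,2)$ your region (with $j=i$) has boundary lattice points $(7,0),(5,1),(3,2),(1,3),(0,7)$, hence four adjacent pairs, while the resolution has only two exceptional curves ($7/2=[4,2]$); for $\frac{1}{7}(1,6)$ you get two adjacent pairs against six curves. Worse, the two paths realizing adjacent boundary vectors of that region satisfy $k+mb\equiv 0 \pmod r$, so they are \emph{cycles at $i$}, which is incompatible with $i$ being the sink of the supporting subquiver; and if you instead range over sources $j\neq i$, the coset regions for different $j$ produce duplicated and spurious families (e.g.\ for $\frac{1}{7}(1,2)$ and $i-j=3$ the boundary points $(3,0),(1,1),(0,5)$ give two pairs reducing to the ratios $x^2{:}y$ and $x{:}y^4$ already obtained elsewhere). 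The correct minimal datum, as in the paper, is one component per admissible source $j$, taking the pure-$a$ path and the pure-$b$ path from $j$ to $i$; admissibility is precisely that $\frac{1}{r}(n,m)$ is a boundary lattice point of $L$, and the coordinates come out as pure powers $(x^m : y^n)$ (the character orthogonal to that ray), not mixed monomials.

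Two further points you underestimate. First, the ``minimal subquiver containing the two paths'' cannot be the supporting subquiver of an almost large module: with dimension vector $(1,\ldots,1)$, every vertex must carry a nonzero path to the sink or else its vertex simple enters the socle, so the paper must enlarge $Q^m$ to $\widetilde{Q}^m$ by adding both arrows at every vertex off the two chosen paths, and then verify this still satisfies the relations. Second, exhaustiveness and the chain structure are the technical bulk of the paper's argument (its steps (i)--(xiii)): showing that every $\ell_{\mathcal{P}}=1$ almost large module with socle $S_0$ is supported on some $\widetilde{Q}^m$, that the $\ell_{\mathcal{P}}=2$ modules are exactly the two degenerate points of each $\mathbb{P}^1$-family, that adjacent families glue via $\widetilde{Q}^{m_i,b}=\widetilde{Q}^{m_{i+1},a}$, and that every non-simple $(1,\ldots,1)$-module with socle $S_0$ is in fact almost large. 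Your closing sentence gestures at all of this but supplies none of it.
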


\begin{proof} As noted above, the large modules have dimension vector $(1, \ldots, 1)$, so we may fix any vertex $0 \in Q_0 = \{0, \ldots, r-1 \}$ and consider the isoclasses of almost large modules with socle isomorphic to the vertex simple $S_0$.

Let $L$ denote the lattice $\mathbb{Z}^2 + \mathbb{Z}\cdot \frac 1r (1,b) \subset \mathbb{R}^2$.  For $m \in \{1, \ldots, r-1\}$, let $p \in e_0Q_{\geq 1}$ the unique path satisfying $\bar{\tau}(p)=x^m$, that is, $p = a_1a_2\cdots a_m$, and let $q \in e_0Q_{\geq 1}e_{\operatorname{t}(p)}$ be the unique path satisfying $\bar{\tau}(q)=y^n$ for some $n \in \{ 1, \ldots r-1 \}$.  Then $m = \overline{nb}$, so $\frac 1r (n,m) = \frac 1r \left(n, \overline{nb} \right) \in L$ is in the unit square of $\mathbb{R}^2$.

Let $Q^m \subset Q$ be the subquiver defined by
$$Q_i^m  = \left\{ a \in Q_i \ | \ a \text{ is a subpath of } p \text{ or } q \right\}, \ \ \ i = 0,1.$$
Note that $0 \in Q^m_0$ is a sink for $Q^m$ and 
$$j:= \operatorname{t}(p) = \operatorname{t}(q) \in Q^m_0$$
is a source (denoted by the bold vertices in figure \ref{cyclic example}).

Consider two subquivers $Q^{m}$ and $Q^{m'}$ of $Q$ where $m = \overline{nb}$ and $m'= \overline{n'b}$ with $1 \leq n,n' \leq r-1$.  
If $n < n'$ and $m' < m$ then clearly $Q^m \not \subset Q^{m'}$ and $Q^{m'} \not \subset Q^m$.
Now the boundary lattice points of the convex hull of $L \subset \mathbb{R}^2$ in the positive quadrant, excluding the origin, are precisely the points $\frac 1r (n',m')$ for which $n < n'$ implies $m' < m$ (and these points are in 1-1 correspondence with the irreducible components of the exceptional locus in $Y$).  There is thus a 1-1 correspondence between the maximal chains of subquivers
\begin{equation} \label{maximal chain} 
Q^{m_1} \subsetneq Q^{m_2} \subsetneq \cdots \subsetneq Q^{m_{\ell}}
\end{equation}
and the boundary lattice points.

Now let $Q^m$ be the minimal term in a maximal chain (\ref{maximal chain}), and construct the subquiver $\widetilde{Q}^m \supset Q^m$ of $Q$ by adding the arrows $a_i$ and $b_i$ to $Q^m$ for each $i \not \in Q^m$ (these are denoted by the dotted arrows in figure \ref{cyclic example}).

Since $\operatorname{dim}Z = 2$, we must determine all maximal chains $0 \subsetneq P_1 \subsetneq P_2$ as in Definition \ref{almost large}.  Denote by $\mathcal{P}$ the path-like set $Q_{\geq 0} \cup \{ 0 \}$.\\
\\
(i) \textit{If $Q^m$ is the minimal term in a maximal chain (\ref{maximal chain}), then $p$ and $q$ cannot have a common vertex subpath $e_k$ different from the sink and source of $Q^{m}$, namely $e_0$ and $e_j$.}  Suppose otherwise; let $p_1$ and $q_1$ be the (unique) subpaths of $p$ and $q$ respectively, satisfying $p_1,q_1 \in e_kQ_{\geq 1}e_j$.  Then there are subpaths $p_2$ and $q_2$ of $p$ and $q$ such that $p_2,q_2 \in e_1Q_{\geq 1}e_{\operatorname{t}(p_2)}$.  The subquiver corresponding to $p_2$ and $q_2$ is then a subquiver of $Q^{m}$, contracting the minimality of $Q^{m}$ in a maximal chain.  It follows that $p$ and $q$ have no common vertex subpaths other than the source and sink of $Q^{m}$.  $\Box$\\
\\
(ii) \textit{$\widetilde{Q}^m$ supports an $A$-module with dimension vector $(1,\ldots,1)$.}  It suffices to show that if $a_i, b_{\operatorname{h}(a_i)} \in \widetilde{Q}^m_1$ then $b_i, a_{\operatorname{h}(b_i)} \in \widetilde{Q}^m_1$ as well, since the relation $b_{\operatorname{h}(a_i)}a_i = a_{\operatorname{h}(b_i)}b_i$ must hold.  

If $i \in Q^m_0$ and $a_i \in \widetilde{Q}^m_1$ then $b_{\operatorname{h}(a_i)} \not \in \widetilde{Q}^m_1$ by (i), so it must be that $i \not \in Q^m_0$.  But then $b_i \in \widetilde{Q}^m_1$ by construction of $\widetilde{Q}^m$, so we just need to show that $a_{\operatorname{h}(b_i)} \in \widetilde{Q}^m_1$ as well.  If $\operatorname{h}(b_i) \not \in Q^m_0$ then $a_{\operatorname{h}(b_i)} \in \widetilde{Q}^m_1$, again by construction.  Otherwise suppose $\operatorname{h}(b_i) \in Q^m_0$.  Since $i \not \in Q^m_0$, $b_i \not \in Q^m_1$, so $e_{\operatorname{h}(b_i)}$ cannot be a subpath of $q$ different from $e_j$ since there is only one $b$ arrow whose head is at any given vertex, and thus $e_{\operatorname{h}(b_i)}$ must be a subpath of $p$.  Moreover, $\operatorname{h}(b_i) \not = 0$ since $q$ contains the $b$ arrow whose head is at $0$.  But then $a_{\operatorname{h}(b_i)} \in Q^m_1 \subseteq \widetilde{Q}^m_1$, proving our claim.  $\Box$\\
\\
(iii) \textit{Any module $V \in \operatorname{Rep}_{(1,\ldots,1)}A$ supported on $\widetilde{Q}^m$ has socle $S_0$ and therefore is not simple.}  Since $0 \in \widetilde{Q}^m_0$, $a_0$ and $b_0$ will not be added to $Q^m$ to form $\widetilde{Q}^m_0$, and so $0$ is a sink in $\widetilde{Q}^m$.  It therefore suffices to show that for each $i \in \widetilde{Q}_0^m = Q_0$ there is a path $s$ in $Q$ from $i$ to $0$ that is contained in $\widetilde{Q}^m$ (that is, $s$ does not annihilate $V$) since the dimension vector of $V$ is $(1, \ldots, 1)$.   We claim there exists a path $s = r a_{k_t} \cdots a_{k_2}a_{k_1}$, where $r$ is a subpath of $p$ or $q$ with head at $0$.  

For $1 \leq u \leq t$, if $\operatorname{h}(a_{k_u}) \in Q^m_0$ then $u = t$; otherwise $\operatorname{h}(a_{k_u}) \not \in Q^m_0$, in which case there exists arrows $a_{\operatorname{h}(a_{k_u})}$ and $b_{\operatorname{h}(a_{k_u})}$ in $\widetilde{Q}^m_1$ by construction, so $a_{k_{u+1}}a_{k_u}$ is a path in $\widetilde{Q}^m$.  Now $a_{k_{u+1}}$ cannot be a subpath of $a_{k_{u}} \cdots a_{k_1}$ since $r$ and $b$ are coprime and $0$ is a sink, and it follows that $t$ must exist since the number of vertices is finite.  $\Box$\\
\\
(iv) \textit{If $Q'$ supports an $A$-module with dimension vector $(1,\ldots,1)$ and $\widetilde{Q}^m \subsetneq Q' \subseteq Q$ then $Q' = Q$.}  Suppose $b_i \in Q'_1 \setminus \widetilde{Q}^m_1$.  Then $i \in \widetilde{Q}^m_0 \setminus \{ j  \}$, specifically $i$ is a vertex subpath of $p = a_1a_2 \cdots a_m$.

Now if $\operatorname{h}(b_i) \not \in Q^m_0$ then $a_{\operatorname{h}(b_i)} \in \widetilde{Q}^m_1 \subset Q'_1$, while if $\operatorname{h}(b_i) \in Q^m_0$ then $a_{\operatorname{h}(b_i)} \in Q^m_1 \subseteq Q'_1$ (otherwise the head of $b_i$ would coincide with the head of a $b$ arrow in $Q^m_1$, and since there is precisely one $b$ arrow with head at any given vertex then $b_i$ would be in $Q^m_1$, contrary to our original assumption).  Therefore in either case $a_{\operatorname{h}(b_i)} \in Q'_1$.  Since $Q'$ supports an $A$-module and $a_{\operatorname{h}(b_i)}$ and $b_i$ are both in $Q'_1$, the relation
$$a_{\operatorname{h}(b_i)}b_i = b_{\operatorname{h}(a_i)}a_i$$
implies $b_{\operatorname{h}(a_i)}$ is also in $Q'_1$.  We can apply this argument iteratively (next with $b_{\operatorname{h}(a_i)}$ in place of $b_i$) to deduce that
$$b_{\operatorname{h}(a_1 a_2 \cdots a_{i-1}a_i)} = b_0$$
is in $Q_1'$.  A similar argument with the $a$ arrows then implies $Q'_1 = Q_1$, and hence $Q' = Q$. $\Box$\\
\\
(v) \textit{Any module $V \in \operatorname{Rep}_{(1,\ldots,1)}A$ supported on $\widetilde{Q}^m$ is an $\ell_{\mathcal{P}} = 1$ almost large module.}  By (ii) $\widetilde{Q}^m$ supports an $A$-module; by (iii) $V$ is not simple; and by (iv) the chain $0 \subsetneq P_1$, where $P_1$ is the $\mathcal{P}$-annihilator of $V$, is maximal. $\Box$\\
\\
(vi) \textit{$\widetilde{Q}^m$ supports a $\mathbb{P}^1$-family, minus the two points where one of the coordinates is zero.}  By (i) $p$ and $q$ have no common vertex subpaths and so clearly $Q^m$ supports a $\mathbb{P}^1$-family (minus two points); see the upper diagram in figure \ref{P^1 family}.  By (iii) any module $V$ supported on $\widetilde{Q}^m$ will have socle $S_0$, and so together with the ``commutation'' relations from $I$ this implies that $V$ is isomorphic to a module in which all the $a$ arrows in $\widetilde{Q}^m_1$ are represented by the same scalar, and all the $b$ arrows are represented by the same scalar.  The claim then follows since the subquiver $Q^m$ of $\widetilde{Q}^m$ supports a $\mathbb{P}^1$-family. $\Box$\\
\\
(vii) \textit{If $Q'$ supports an $\ell_{\mathcal{P}} = 1$ almost large module with socle $S_0$ then $Q' = \widetilde{Q}^m$ for some $m$.}  By our assumptions on $Q'$, $Q'$ must contain as a subquiver a minimal term $Q^{m}$ in some maximal chain (\ref{maximal chain}), and by (iv) we may assume that $Q'$ does not properly contain $\widetilde{Q}^m$, for otherwise it would equal $Q$.  In addition, by assuming $\ell_{\mathcal{P}} = 1$, $Q'$ cannot be properly contained in $\widetilde{Q}^m$.  Suppose that $a_i \in Q'_1 \setminus Q_1^m$, where $i \not = j$ is a vertex subpath of $q$.  By the argument in (iv), $a_0$ must then also be in $Q_1'$, and so the socle of any module supported on $Q'_1$ would not be $S_0$.  Similarly $b_i \not \in Q_1'$ if $i \not = j$ is a vertex subpath of $p$.  Thus if $a_i$ or $b_i$ is in $Q'_1 \setminus Q_1^m$ then $i$ must be not be in $Q^m_0$, so by the construction of $\widetilde{Q}^m$ we have $Q'_1 \subseteq \widetilde{Q}^m_1$ and hence $Q' = \widetilde{Q}^m$. $\Box$\\
\\
\indent We have now characterized the $\ell_{\mathcal{P}} = 1$ almost large module isoclasses with socle $S_0$, and now we characterize the $\ell_{\mathcal{P}} = 2$ almost large modules.

Set
$$\begin{array}{rcl}
\alpha & := & \left\{ a_i \in \widetilde{Q}^m_1 \ | \ i = j \ \text{ or } \ b_k \cdots b_{\operatorname{h}(b_i)} b_i \in e_j\widetilde{Q}^m_{\geq 1} \right\},
\\ \\
\beta & := & \left\{ b_i \in \widetilde{Q}^m_1 \ | \ i = j \ \text{ or } \ a_k \cdots a_{\operatorname{h}(a_i)} a_i \in e_j \widetilde{Q}^m_{\geq 1} \right\}.
\end{array}$$
Consider the subquivers $\widetilde{Q}^{m,a}$ and $\widetilde{Q}^{m,b}$ of $\widetilde{Q}^m$ with vertex sets $Q_0$ and arrow sets $\widetilde{Q}^m_1 \setminus \alpha$ and $\widetilde{Q}^m_1 \setminus \beta$, respectively. \\
\\
(viii) \textit{The subquivers $\widetilde{Q}^{m,a}$ and $\widetilde{Q}^{m,b}$ support $A$-modules with dimension vector $(1,\ldots,1)$.}  Let $\rho \in \operatorname{Rep}_{(1,\ldots,1)}A$ be a representation supported on $\widetilde{Q}^{m,a}$, and suppose $a_i \in \alpha$, so $\rho(a_i) = 0$.  It suffices to show that the relations 
\begin{equation} \label{star}
\rho(a_{\operatorname{h}(b_i)}b_i ) = \rho( b_{\operatorname{h}(a_i)}a_i) = 0
\end{equation}
and
\begin{equation} \label{starstar}
\rho(b_{\operatorname{h}(a_t)}a_t ) = \rho(a_ib_t) = 0
\end{equation}
hold, where $\operatorname{h}(b_t) = i$.

In the first case, if $i = j$ then by (i) $a_{\operatorname{h}(b_i)} \not \in Q^m_1$, hence $a_{\operatorname{h}(b_i)} \not \in \widetilde{Q}^m_1$ since $\operatorname{h}(b_i) \in Q^m_0$, so (\ref{star}) holds.  

If $i \not = j$ then $a_i \in \alpha$ implies $b_k \cdots b_{\operatorname{h}(b_i)}b_i \in e_j\widetilde{Q}^m_{\geq 1}$.  If the length of the path $b_k \cdots b_i$ is 1 (so the path is really just $b_i$), then $\operatorname{h}(b_i) = j$, so $a_{\operatorname{h}(b_i)} = a_j \in \alpha$, so (\ref{star}) holds.  Otherwise if the length of the path $b_k \cdots b_i$ is at least 2 then $b_k \cdots b_{\operatorname{h}(b_i)} \in e_j \widetilde{Q}^m_{\geq 1}$ as well, in which case $a_{\operatorname{h}(b_i)} \in \alpha$, hence (\ref{star}) holds.

Now in the second case, first suppose $b_t \in \widetilde{Q}^m_1$.  If $i = j$ then $b_t \in e_j \widetilde{Q}^m_1$, so $a_t \in \alpha$, hence (\ref{starstar}) holds.  If $i \not = j$ then $a_i \in \alpha$ implies $b_k \cdots b_i \in e_j \widetilde{Q}^m_{\geq 1}$, hence $b_k \cdots b_i b_t \in e_j \widetilde{Q}^m_{\geq 1}$ since $\operatorname{h}(b_t) = i$, and so $a_t \in \alpha$, hence (\ref{starstar}) holds.  

Otherwise suppose $b_t \not \in \widetilde{Q}^m_1$.  Then $t \not \in Q^m_0$, so it must be that $a_t \in Q^m_1$, and by (i), $b_{\operatorname{h}(a_t)} \not \in Q^m_1$, hence $b_{\operatorname{h}(a_t)} \not \in \widetilde{Q}^m_1$ since $\operatorname{h}(a_t) \in Q^m_0$, and so (\ref{starstar}) holds.

We have shown that in all cases the relations (\ref{star}) and (\ref{starstar}) are satisfied, so $Q^{m,a}$ supports an $A$-module, and similarly for $Q^{m,b}$.  $\Box$\\
\\
(ix) \textit{Any module in $\operatorname{Rep}_{(1,\ldots,1)}A$ supported on $Q^{m,a}$ or $Q^{m,b}$ has socle $S_0$.}  Since $0$ is a sink in $\widetilde{Q}^m$ by (iii), it is also a sink in $\widetilde{Q}^{m,a}$, and so it suffices to show that for any vertex $k \in \widetilde{Q}^{m,a}_0 = Q_0$ there exists a path $s$ from $k$ to $0$ in $\widetilde{Q}^{m,a}$.  No $b$ arrows are removed from $\widetilde{Q}^m$ to form $\widetilde{Q}^{m,a}$, and so by (iii) we may take $s$ to be $rb_{j_t} \cdots b_{j_2}b_{j_1}$, where $r$ is a subpath of $p$ or $q$ with head at $0$ ($q$ if $\operatorname{h}(b_{j_t}) = j$).  We therefore only need to show that if $a_i \in \alpha$ and $i \not = j$, then $a_i$ is not a subpath of $p$.  Suppose otherwise; since $a_i \in \alpha$ and $i \not = j$, $b_k \cdots b_i$ is a path in $\widetilde{Q}^m$, so $b_i$ is a path in $\widetilde{Q}^m$, and hence a path in $Q^m$ since $i \in Q^m_0$, which is a contradiction by (i).  $\Box$\\
\\
(x) \textit{If $Q'$ supports an $\ell_{\mathcal{P}} = 2$ almost large module with socle $S_0$, then $Q' = \widetilde{Q}^{m,a}$ or $Q' = \widetilde{Q}^{m,b}$.}  Suppose $b_i \not \in Q'_1$; then we claim that $b_j$ is also not in $Q'_1$, hence $Q' \subseteq \widetilde{Q}^{m,b}$ since $Q'$ supports an $A$-module, so $Q' = \widetilde{Q}^{m,b}$ since $Q'$ supports an $\ell_{\mathcal{P}} = 2$ almost large module.  First suppose $i \in Q^m_0 \setminus \{ j \}$.  Since $Q' \subset \widetilde{Q}^m$ and (i) holds, the vertex $i$ would be a sink of $Q'$, and so $S_i$ would be a direct summand of the socle of any module supported on $Q'$, contrary to our assumption.  So suppose $i \not \in Q^m_0$.  Since $Q'$ supports a module with socle $S_0$, there exists a path $r$ from $\operatorname{h}(b_i)$ to $0$.  Since $Q' \subset \widetilde{Q}^m$ and (i) holds, $r = r'p$ or $r = r'q$ for some path $r'$ in $Q'$ from $\operatorname{h}(b_i)$ to $j$.  Thus for any $\rho \in \operatorname{Rep}_{(1,\ldots,1)}A$ supported on $Q'$, $\rho(b_jr') = \rho(r''b_i) = 0$ for some path $r''$ in $Q$.  But $\rho(r') \not = 0$ since $r'$ is a path in $Q'$, so it must be that $\rho(b_j) = 0$, hence $b_j \not \in Q'_1$, proving our claim.  $\Box$\\
\\
(xi) \textit{There is only one module in $\operatorname{Rep}_{(1,\ldots,1)}A$ supported respectively on $Q^{m,a}$ and $Q^{m,b}$, up to isomorphism.}  Suppose to the contrary that the underlying graph of $\widetilde{Q}^{m,a}$ contains a cycle.  Since $a_0, b_0 \not \in \widetilde{Q}^m_1$, $\widetilde{Q}^{m,a}$ contains no oriented cycles, so there must be a vertex $i$ for which both $a_i$ and $b_i$ are in $\widetilde{Q}^{m,a}_1$.  Since $\widetilde{Q}^{m,a}$ supports a representation $\rho \in \operatorname{Rep}_{(1,\ldots,1)}A$ with socle $S_0$ by (ix), there exists a path $r$ from $\operatorname{h}(b_i)$ to $j$ in $\widetilde{Q}^{m,a}$ (recall the proof of (x)).  But $a_j \not \in \widetilde{Q}^{m,a}$ implies $\rho(b_jra_i) = \rho(a_jrb_i) = 0$, so $\rho(a_i)=0$ since $\rho(b_jr) \not = 0$, hence $a_i \not \in \widetilde{Q}^{m,a}$, a contradiction.  Similarly the underlying graph of $Q^{m,b}$ contains no cycles.  The claim then follows since we are considering modules with dimension vector $(1,\ldots,1)$.  $\Box$\\
\\
(xii) \textit{There is an equality of subquivers $Q^{m_i,b} = Q^{m_{i+1},a}$.}  Denote by $j_i$, $p_i$, and $q_i$, the source $j$ and respective paths $p$ and $q$ of $Q^{m_i}$.  Suppose to the contrary that $b_{j_i} \in \widetilde{Q}^{m_{i+1}}_1$.  Since $p_i$ is a subpath of $p_{i+1}$, it follows that $a_{j_i} \in Q^{m_{i+1}}_1$, hence $a_{j_i} \in \widetilde{Q}^{m_{i+1}}_1$ as well, so it must be that $j_i \not \in Q^{m_{i+1}}_0$ by (i) since then both $a_{j_i}$ and $b_{j_i}$ are in $Q^{m_{i+1}}_1$ and $j_i \not = j_{i+1}$.  But $a_{j_i} \in Q^{m_{i+1}}_1$ implies $j_i \in Q^{m_{i+1}}_0$, a contradiction.

Similarly suppose to the contrary that $a_{j_{i+1}} \in \widetilde{Q}^{m_i}_1$.  Since $p_i$ is a subpath of $p_{i+1}$, by (i) it must be that $q_{i+1}$ is a subpath of $q_i$.  Therefore $b_{j_{i+1}} \in Q^{m_i}_1$, hence $b_{j_{i+1}} \in \widetilde{Q}^{m_i}_1$ as well, so it must be that $j_{i+1} \not \in Q^{m_i}_0$ by (i) and $j_{i+1} \not = j_i$.  But $b_{j_{i+1}} \in Q^{m_i}_1$ implies $j_{i+1} \in Q^{m_i}_0$, a contradiction.

Since $b_{j_i} \not \in \widetilde{Q}^{m_{i+1}}_1$ and $\widetilde{Q}^{m_{i+1}}$ supports an $A$-module, we have $\widetilde{Q}^{m_{i+1}} \subseteq \widetilde{Q}^{m_i}$.  Similarly since $a_{j_{i+1}} \not \in \widetilde{Q}^{m_i}$ we have $\widetilde{Q}^{m_i} \subseteq \widetilde{Q}^{m_{i+1}}$, proving our claim.  $\Box$\\
\\
(xiii) \textit{If $V \in \operatorname{Rep}_{(1,\ldots,1)}A$ has socle $S_0$ then $V$ is an almost large module.}  Suppose $Q'$ supports $V$.  Since $Q'$ supports a module with dimension vector $(1,\ldots,1)$ and socle $S_0$, for each $i \in Q_0$ there must be a path in $Q'$ from $i$ to $0$.  Therefore there is some $m$ such that $Q^m_1 \setminus \{a_j\}$ or $Q^m_1 \setminus \{b_j\}$ is a subset of $Q'_1$.

First suppose $Q^m$ is a subquiver of $Q'$, and suppose $a_i \in Q'_1 \setminus Q^m_1$.  Then since $Q'$ supports an $A$-module with socle $S_0$, there is a path $p$ from $i$ to $j$, say $p = a_i p'$ with $p'$ a path.  By the relations of $A$, $ap'b_j = b_ip''$ for some path $p''$, so $b_i \in Q'_1$.  Since $i \not \in Q^m_0$ was arbitrary, $Q' = \widetilde{Q}^m$ by the construction of $\widetilde{Q}^m$.

Now suppose $Q^m_1 \setminus \{a_j\}$ is a subset of $Q'_1$, but $Q^m_1$ is not.  By the proof of (x), $Q' \subseteq \widetilde{Q}^{m,a}$.  Suppose to the contrary that this containment is proper, that is, there is some $a_i$ or $b_i$ in $\widetilde{Q}^{m,a}_1 \setminus Q'_1$ with $i \not \in Q^m_0$.  If $a_i \not \in Q'_1$ then there must exist a path consisting entirely of $b$ arrows from $i$ to $0$, for if $p$ is a path from $i$ to $0$ containing an $a$ arrow then by the relations of $A$, $p = a_ip'$ for some path $p'$.  But by the construction of $\widetilde{Q}^{m,a}$, there is no path consisting entirely of $b$ arrows from $i$ to $0$ since $a_i \in \widetilde{Q}^{m,a}_1$.  Similarly, if $b_i \not \in Q'_1$ then there must exist a path consisting entirely of $a$ arrows from $i$ to $0$ in $Q'$.  But there is no such path in $\widetilde{Q}^{m,a}$ since such a path would necessarily contain $a_j$, and $a_j \not \in \widetilde{Q}^{m,a}_1$, so $a_j \not \in Q'_1$ as well.  Thus the containment cannot be proper and so $Q' = \widetilde{Q}^{m,a}$.  The case where $Q^m_1 \setminus \{b_j\}$ is a subset of $Q'_1$ is similar.  $\Box$\\
\\
\indent The path-like set $\mathcal{P} = Q_{\geq 0} \cup \{0\}$ is sufficient for determining all almost large modules since the almost large modules with socle $S_0$ obtained from $Q_{\geq 0} \cup \{0\}$ exhaust the set of all modules in $\operatorname{Rep}_{(1,\ldots,1)}A$ with socle $S_0$ by (xiii).  By (x) and (xi), the $\ell_{\mathcal{P}} = 2$ almost large module isoclasses with socle $S_0$ are parameterized by the points in the $\mathbb{P}^1$-families where one coordinate is zero, namely $(0:1)$ or $(1:0)$.  Together with (vi) and (vii), it follows that there is a 1-1 correspondence between the supporting subquivers of almost large modules with socle $S_0$ (each of which supports a $\mathbb{P}^1$-family) and the boundary lattice points of $L$, and hence the irreducible components of the exceptional locus, each of which is a $\mathbb{P}^1$ \cite[Proposition 2.2, Theorem 3.2]{Reid}.  Furthermore, by (xii) the intersections of the irreducible components parameterize the intersections of the $\mathbb{P}^1$-families of almost large modules.

Finally, if $Q^{m,n}$ is the minimal term in the chain (\ref{maximal chain}), then $Q^{m,n}$, and hence $\widetilde{Q}^{m,n}$, supports a $\mathbb{P}^1$-family with homogeneous coordinates $(x^n:y^m)$, obtained from (\ref{O}) and the impression $(\tau, \mathbb{C}[x,y])$ of $A$ given in lemma \ref{cyclic impression}.  But these are precisely the coordinates obtained from the Hirzebruch-Jung resolution; see for example \cite[Theorem 3.2]{Reid} and references therein.
\end{proof}

\begin{figure}
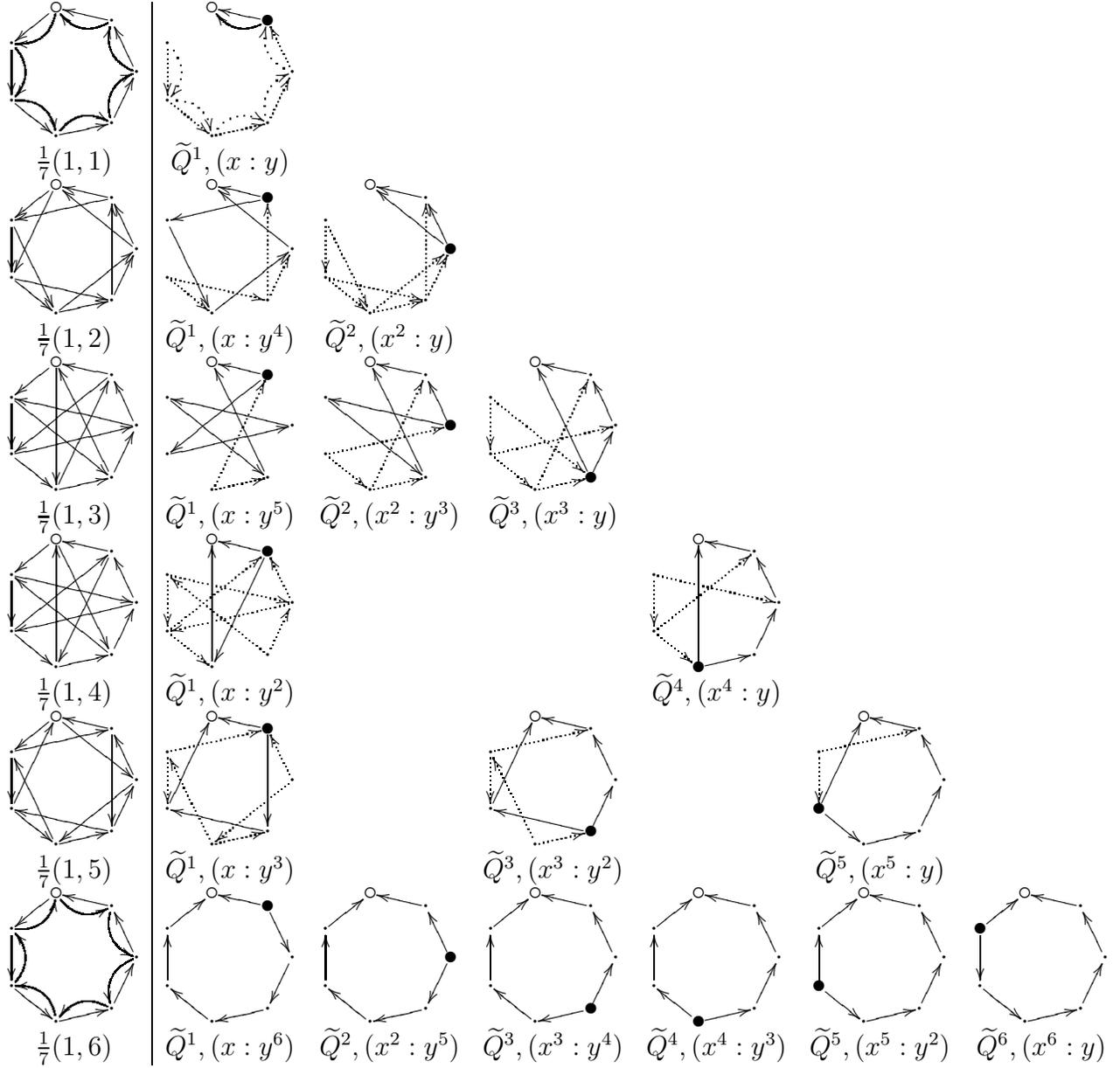

$$\begin{array}{c|cccccc}
\xy
(6.235,7.818)*{\cdot}="2";(-2.225,9.750)*{\circ}="1";(-9.010,4.339)*{\cdot}="7";(-9.010,-4.339)*{\cdot}="6";(-2.225,-9.750)*{\cdot}="5";(6.235,-7.818)*{\cdot}="4";(10,0)*{\cdot}="3";
{\ar@{<-}"1";"2"};{\ar@{<-}"2";"3"};{\ar@{<-}"3";"4"};{\ar@{<-}"4";"5"};{\ar@{<-}"5";"6"};{\ar@{<-}"6";"7"};{\ar@{<-}"7";"1"};
{\ar@{<-}@/_/"1";"2"};{\ar@{<-}@/_/"2";"3"};{\ar@{<-}@/_/"3";"4"};{\ar@{<-}@/_/"4";"5"};{\ar@{<-}@/_/"5";"6"};{\ar@{<-}@/_/"6";"7"};{\ar@{<-}@/_/"7";"1"};
\endxy
&
\xy
(6.235,7.818)*{\bullet}="2";(-2.225,9.750)*{\circ}="1";(-9.010,4.339)*{\cdot}="7";(-9.010,-4.339)*{\cdot}="6";(-2.225,-9.750)*{\cdot}="5";(6.235,-7.818)*{\cdot}="4";(10,0)*{\cdot}="3";
{\ar@{<-}"1";"2"};{\ar@{<..}"2";"3"};{\ar@{<..}"3";"4"};{\ar@{<..}"4";"5"};{\ar@{<..}"5";"6"};{\ar@{<..}"6";"7"};{\ar@{}"7";"1"};
{\ar@{<-}@/_/"1";"2"};{\ar@{<..}@/_/"2";"3"};{\ar@{<..}@/_/"3";"4"};{\ar@{<..}@/_/"4";"5"};{\ar@{<..}@/_/"5";"6"};{\ar@{<..}@/_/"6";"7"};{\ar@{}"7";"1"};\endxy
&&&&&
\\
\frac 17 (1,1) & \widetilde{Q}^{1},(x:y) &&&&&
\\
\xy
(6.235,7.818)*{\cdot}="2";(-2.225,9.750)*{\circ}="1";(-9.010,4.339)*{\cdot}="7";(-9.010,-4.339)*{\cdot}="6";(-2.225,-9.750)*{\cdot}="5";(6.235,-7.818)*{\cdot}="4";(10,0)*{\cdot}="3";
{\ar@{<-}"1";"2"};{\ar@{<-}"2";"3"};{\ar@{<-}"3";"4"};{\ar@{<-}"4";"5"};{\ar@{<-}"5";"6"};{\ar@{<-}"6";"7"};{\ar@{<-}"7";"1"};
{\ar@{<-}"1";"3"};{\ar@{<-}"3";"5"};{\ar@{<-}"5";"7"};{\ar@{<-}"7";"2"};{\ar@{<-}"2";"4"};{\ar@{<-}"4";"6"};{\ar@{<-}"6";"1"};
\endxy
&
\xy
(6.235,7.818)*{\bullet}="2";(-2.225,9.750)*{\circ}="1";(-9.010,4.339)*{\cdot}="7";(-9.010,-4.339)*{\cdot}="6";(-2.225,-9.750)*{\cdot}="5";(6.235,-7.818)*{\cdot}="4";(10,0)*{\cdot}="3";
{\ar@{<-}"1";"2"};{\ar@{}"2";"3"};{\ar@{}"3";"4"};{\ar@{}"4";"5"};{\ar@{}"5";"6"};{\ar@{}"6";"7"};{\ar@{}"7";"1"};
{\ar@{<-}"1";"3"};{\ar@{<-}"3";"5"};{\ar@{<-}"5";"7"};{\ar@{<-}"7";"2"};{\ar@{}"2";"4"};{\ar@{}"4";"6"};{\ar@{}"6";"1"};
{\ar@{..>}"6";"5"};{\ar@{..>}"6";"4"};{\ar@{..>}"4";"2"};{\ar@{..>}"4";"3"};
\endxy
&
\xy
(6.235,7.818)*{\cdot}="2";(-2.225,9.750)*{\circ}="1";(-9.010,4.339)*{\cdot}="7";(-9.010,-4.339)*{\cdot}="6";(-2.225,-9.750)*{\cdot}="5";(6.235,-7.818)*{\cdot}="4";(10,0)*{\bullet}="3";
{\ar@{<-}"1";"2"};{\ar@{<-}"2";"3"};{\ar@{<..}"3";"4"};{\ar@{<..}"4";"5"};{\ar@{<..}"5";"6"};{\ar@{<..}"6";"7"};{\ar@{}"7";"1"};
{\ar@{<-}"1";"3"};{\ar@{<..}"3";"5"};{\ar@{<..}"5";"7"};{\ar@{}"7";"2"};{\ar@{<..}"2";"4"};{\ar@{<..}"4";"6"};{\ar@{}"6";"1"};
\endxy 
&&&& 
\\ 
\frac 17 (1,2) & \widetilde{Q}^{1}, (x:y^4)  & \widetilde{Q}^{2}, (x^2:y) &&&&
\\
\xy
(6.235,7.818)*{\cdot}="2";(-2.225,9.750)*{\circ}="1";(-9.010,4.339)*{\cdot}="7";(-9.010,-4.339)*{\cdot}="6";(-2.225,-9.750)*{\cdot}="5";(6.235,-7.818)*{\cdot}="4";(10,0)*{\cdot}="3";
{\ar@{<-}"1";"2"};{\ar@{<-}"2";"3"};{\ar@{<-}"3";"4"};{\ar@{<-}"4";"5"};{\ar@{<-}"5";"6"};{\ar@{<-}"6";"7"};{\ar@{<-}"7";"1"};
{\ar@{<-}"1";"4"};{\ar@{<-}"4";"7"};{\ar@{<-}"7";"3"};{\ar@{<-}"3";"6"};{\ar@{<-}"6";"2"};{\ar@{<-}"2";"5"};{\ar@{<-}"5";"1"};
\endxy
&
\xy
(6.235,7.818)*{\bullet}="2";(-2.225,9.750)*{\circ}="1";(-9.010,4.339)*{\cdot}="7";(-9.010,-4.339)*{\cdot}="6";(-2.225,-9.750)*{\cdot}="5";(6.235,-7.818)*{\cdot}="4";(10,0)*{\cdot}="3";
{\ar@{<-}"1";"2"};{\ar@{}"2";"3"};{\ar@{}"3";"4"};{\ar@{}"4";"5"};{\ar@{}"5";"6"};{\ar@{}"6";"7"};{\ar@{}"7";"1"};
{\ar@{<-}"1";"4"};{\ar@{<-}"4";"7"};{\ar@{<-}"7";"3"};{\ar@{<-}"3";"6"};{\ar@{<-}"6";"2"};{\ar@{}"2";"5"};{\ar@{}"5";"1"};
{\ar@{..>}"5";"4"};{\ar@{..>}"5";"2"};
\endxy
&
\xy
(6.235,7.818)*{\cdot}="2";(-2.225,9.750)*{\circ}="1";(-9.010,4.339)*{\cdot}="7";(-9.010,-4.339)*{\cdot}="6";(-2.225,-9.750)*{\cdot}="5";(6.235,-7.818)*{\cdot}="4";(10,0)*{\bullet}="3";
{\ar@{<-}"1";"2"};{\ar@{<-}"2";"3"};{\ar@{}"3";"4"};{\ar@{}"4";"5"};{\ar@{}"5";"6"};{\ar@{}"6";"7"};{\ar@{}"7";"1"};
{\ar@{<-}"1";"4"};{\ar@{<-}"4";"7"};{\ar@{<-}"7";"3"};{\ar@{}"3";"6"};{\ar@{}"6";"2"};{\ar@{}"2";"5"};{\ar@{}"5";"1"};
{\ar@{..>}"5";"4"};{\ar@{..>}"5";"2"};{\ar@{..>}"6";"5"};{\ar@{..>}"6";"3"};
\endxy
&
\xy
(6.235,7.818)*{\cdot}="2";(-2.225,9.750)*{\circ}="1";(-9.010,4.339)*{\cdot}="7";(-9.010,-4.339)*{\cdot}="6";(-2.225,-9.750)*{\cdot}="5";(6.235,-7.818)*{\bullet}="4";(10,0)*{\cdot}="3";
{\ar@{<-}"1";"2"};{\ar@{<-}"2";"3"};{\ar@{<-}"3";"4"};{\ar@{<..}"4";"5"};{\ar@{<..}"5";"6"};{\ar@{<..}"6";"7"};{\ar@{}"7";"1"};
{\ar@{<-}"1";"4"};{\ar@{<..}"4";"7"};{\ar@{}"7";"3"};{\ar@{<..}"3";"6"};{\ar@{}"6";"2"};{\ar@{<..}"2";"5"};{\ar@{}"5";"1"};
\endxy &&&
\\
\frac 17 (1,3) & \widetilde{Q}^{1}, (x:y^5) & \widetilde{Q}^{2}, (x^2:y^3) & \widetilde{Q}^{3}, (x^3:y) &&&
\\
\xy
(6.235,7.818)*{\cdot}="2";(-2.225,9.750)*{\circ}="1";(-9.010,4.339)*{\cdot}="7";(-9.010,-4.339)*{\cdot}="6";(-2.225,-9.750)*{\cdot}="5";(6.235,-7.818)*{\cdot}="4";(10,0)*{\cdot}="3";
{\ar@{<-}"1";"2"};{\ar@{<-}"2";"3"};{\ar@{<-}"3";"4"};{\ar@{<-}"4";"5"};{\ar@{<-}"5";"6"};{\ar@{<-}"6";"7"};{\ar@{<-}"7";"1"};
{\ar@{<-}"1";"5"};{\ar@{<-}"5";"2"};{\ar@{<-}"2";"6"};{\ar@{<-}"6";"3"};{\ar@{<-}"3";"7"};{\ar@{<-}"7";"4"};{\ar@{<-}"4";"1"};
\endxy
&
\xy
(6.235,7.818)*{\bullet}="2";(-2.225,9.750)*{\circ}="1";(-9.010,4.339)*{\cdot}="7";(-9.010,-4.339)*{\cdot}="6";(-2.225,-9.750)*{\cdot}="5";(6.235,-7.818)*{\cdot}="4";(10,0)*{\cdot}="3";
{\ar@{<-}"1";"2"};{\ar@{<..}"2";"3"};{\ar@{<..}"3";"4"};{\ar@{}"4";"5"};{\ar@{<..}"5";"6"};{\ar@{<..}"6";"7"};{\ar@{}"7";"1"};
{\ar@{<-}"1";"5"};{\ar@{<-}"5";"2"};{\ar@{<..}"2";"6"};{\ar@{<..}"6";"3"};{\ar@{<..}"3";"7"};{\ar@{<..}"7";"4"};{\ar@{}"4";"1"};
\endxy
& &&
\xy
(6.235,7.818)*{\cdot}="2";(-2.225,9.750)*{\circ}="1";(-9.010,4.339)*{\cdot}="7";(-9.010,-4.339)*{\cdot}="6";(-2.225,-9.750)*{\bullet}="5";(6.235,-7.818)*{\cdot}="4";(10,0)*{\cdot}="3";
{\ar@{<-}"1";"2"};{\ar@{<-}"2";"3"};{\ar@{<-}"3";"4"};{\ar@{<-}"4";"5"};{\ar@{}"5";"6"};{\ar@{}"6";"7"};{\ar@{}"7";"1"};
{\ar@{<-}"1";"5"};{\ar@{}"5";"2"};{\ar@{}"2";"6"};{\ar@{}"6";"3"};{\ar@{}"3";"7"};{\ar@{}"7";"4"};{\ar@{}"4";"1"};
{\ar@{..>}"7";"6"};{\ar@{..>}"7";"3"};{\ar@{..>}"6";"5"};{\ar@{..>}"6";"2"};
\endxy
&&
\\
\frac 17 (1,4) & \widetilde{Q}^{1}, (x:y^2) & &&\widetilde{Q}^{4}, (x^4:y) &&
\\
\xy
(6.235,7.818)*{\cdot}="2";(-2.225,9.750)*{\circ}="1";(-9.010,4.339)*{\cdot}="7";(-9.010,-4.339)*{\cdot}="6";(-2.225,-9.750)*{\cdot}="5";(6.235,-7.818)*{\cdot}="4";(10,0)*{\cdot}="3";
{\ar@{<-}"1";"2"};{\ar@{<-}"2";"3"};{\ar@{<-}"3";"4"};{\ar@{<-}"4";"5"};{\ar@{<-}"5";"6"};{\ar@{<-}"6";"7"};{\ar@{<-}"7";"1"};
{\ar@{<-}"1";"6"};{\ar@{<-}"6";"4"};{\ar@{<-}"4";"2"};{\ar@{<-}"2";"7"};{\ar@{<-}"7";"5"};{\ar@{<-}"5";"3"};{\ar@{<-}"3";"1"};
\endxy
&
\xy
(6.235,7.818)*{\bullet}="2";(-2.225,9.750)*{\circ}="1";(-9.010,4.339)*{\cdot}="7";(-9.010,-4.339)*{\cdot}="6";(-2.225,-9.750)*{\cdot}="5";(6.235,-7.818)*{\cdot}="4";(10,0)*{\cdot}="3";
{\ar@{<-}"1";"2"};{\ar@{<..}"2";"3"};{\ar@{}"3";"4"};{\ar@{<..}"4";"5"};{\ar@{}"5";"6"};{\ar@{<..}"6";"7"};{\ar@{}"7";"1"};
{\ar@{<-}"1";"6"};{\ar@{<-}"6";"4"};{\ar@{<-}"4";"2"};{\ar@{<..}"2";"7"};{\ar@{<..}"7";"5"};{\ar@{<..}"5";"3"};{\ar@{}"3";"1"};
\endxy
& &
\xy
(6.235,7.818)*{\cdot}="2";(-2.225,9.750)*{\circ}="1";(-9.010,4.339)*{\cdot}="7";(-9.010,-4.339)*{\cdot}="6";(-2.225,-9.750)*{\cdot}="5";(6.235,-7.818)*{\bullet}="4";(10,0)*{\cdot}="3";
{\ar@{<-}"1";"2"};{\ar@{<-}"2";"3"};{\ar@{<-}"3";"4"};{\ar@{}"4";"5"};{\ar@{}"5";"6"};{\ar@{}"6";"7"};{\ar@{}"7";"1"};
{\ar@{<-}"1";"6"};{\ar@{<-}"6";"4"};{\ar@{}"4";"2"};{\ar@{}"2";"7"};{\ar@{}"7";"5"};{\ar@{}"5";"3"};{\ar@{}"3";"1"};
{\ar@{..>}"7";"6"};{\ar@{..>}"7";"2"};{\ar@{..>}"5";"7"};{\ar@{..>}"5";"4"};
\endxy
& &
\xy
(6.235,7.818)*{\cdot}="2";(-2.225,9.750)*{\circ}="1";(-9.010,4.339)*{\cdot}="7";(-9.010,-4.339)*{\bullet}="6";(-2.225,-9.750)*{\cdot}="5";(6.235,-7.818)*{\cdot}="4";(10,0)*{\cdot}="3";
{\ar@{<-}"1";"2"};{\ar@{<-}"2";"3"};{\ar@{<-}"3";"4"};{\ar@{<-}"4";"5"};{\ar@{<-}"5";"6"};{\ar@{}"6";"7"};{\ar@{}"7";"1"};
{\ar@{<-}"1";"6"};{\ar@{}"6";"4"};{\ar@{}"4";"2"};{\ar@{}"2";"7"};{\ar@{}"7";"5"};{\ar@{}"5";"3"};{\ar@{}"3";"1"};
{\ar@{..>}"7";"6"};{\ar@{..>}"7";"2"};
\endxy &
\\
\frac 17 (1,5) & \widetilde{Q}^{1}, (x:y^3) & & \widetilde{Q}^{3}, (x^3:y^2) & & \widetilde{Q}^{5}, (x^5:y) &
\\
\xy
(6.235,7.818)*{\cdot}="2";(-2.225,9.750)*{\circ}="1";(-9.010,4.339)*{\cdot}="7";(-9.010,-4.339)*{\cdot}="6";(-2.225,-9.750)*{\cdot}="5";(6.235,-7.818)*{\cdot}="4";(10,0)*{\cdot}="3";
{\ar@{<-}"1";"2"};{\ar@{<-}"2";"3"};{\ar@{<-}"3";"4"};{\ar@{<-}"4";"5"};{\ar@{<-}"5";"6"};{\ar@{<-}"6";"7"};{\ar@{<-}"7";"1"};
{\ar@{<-}@/^/"1";"7"};{\ar@{<-}@/^/"7";"6"};{\ar@{<-}@/^/"6";"5"};{\ar@{<-}@/^/"5";"4"};{\ar@{<-}@/^/"4";"3"};{\ar@{<-}@/^/"3";"2"};{\ar@{<-}@/^/"2";"1"};
\endxy
&
\xy
(6.235,7.818)*{\bullet}="2";(-2.225,9.750)*{\circ}="1";(-9.010,4.339)*{\cdot}="7";(-9.010,-4.339)*{\cdot}="6";(-2.225,-9.750)*{\cdot}="5";(6.235,-7.818)*{\cdot}="4";(10,0)*{\cdot}="3";
{\ar@{<-}"1";"2"};{\ar@{}"2";"3"};{\ar@{}"3";"4"};{\ar@{}"4";"5"};{\ar@{}"5";"6"};{\ar@{}"6";"7"};{\ar@{}"7";"1"};
{\ar@{<-}"1";"7"};{\ar@{<-}"7";"6"};{\ar@{<-}"6";"5"};{\ar@{<-}"5";"4"};{\ar@{<-}"4";"3"};{\ar@{<-}"3";"2"};{\ar@{}"2";"1"};
\endxy
&
\xy
(6.235,7.818)*{\cdot}="2";(-2.225,9.750)*{\circ}="1";(-9.010,4.339)*{\cdot}="7";(-9.010,-4.339)*{\cdot}="6";(-2.225,-9.750)*{\cdot}="5";(6.235,-7.818)*{\cdot}="4";(10,0)*{\bullet}="3";
{\ar@{<-}"1";"2"};{\ar@{<-}"2";"3"};{\ar@{}"3";"4"};{\ar@{}"4";"5"};{\ar@{}"5";"6"};{\ar@{}"6";"7"};{\ar@{}"7";"1"};
{\ar@{<-}"1";"7"};{\ar@{<-}"7";"6"};{\ar@{<-}"6";"5"};{\ar@{<-}"5";"4"};{\ar@{<-}"4";"3"};{\ar@{}"3";"2"};{\ar@{}"2";"1"};
\endxy
&
\xy
(6.235,7.818)*{\cdot}="2";(-2.225,9.750)*{\circ}="1";(-9.010,4.339)*{\cdot}="7";(-9.010,-4.339)*{\cdot}="6";(-2.225,-9.750)*{\cdot}="5";(6.235,-7.818)*{\bullet}="4";(10,0)*{\cdot}="3";
{\ar@{<-}"1";"2"};{\ar@{<-}"2";"3"};{\ar@{<-}"3";"4"};{\ar@{}"4";"5"};{\ar@{}"5";"6"};{\ar@{}"6";"7"};{\ar@{}"7";"1"};
{\ar@{<-}"1";"7"};{\ar@{<-}"7";"6"};{\ar@{<-}"6";"5"};{\ar@{<-}"5";"4"};{\ar@{}"4";"3"};{\ar@{}"3";"2"};{\ar@{}"2";"1"};
\endxy
&
\xy
(6.235,7.818)*{\cdot}="2";(-2.225,9.750)*{\circ}="1";(-9.010,4.339)*{\cdot}="7";(-9.010,-4.339)*{\cdot}="6";(-2.225,-9.750)*{\bullet}="5";(6.235,-7.818)*{\cdot}="4";(10,0)*{\cdot}="3";
{\ar@{<-}"1";"2"};{\ar@{<-}"2";"3"};{\ar@{<-}"3";"4"};{\ar@{<-}"4";"5"};{\ar@{}"5";"6"};{\ar@{}"6";"7"};{\ar@{}"7";"1"};
{\ar@{<-}"1";"7"};{\ar@{<-}"7";"6"};{\ar@{<-}"6";"5"};{\ar@{}"5";"4"};{\ar@{}"4";"3"};{\ar@{}"3";"2"};{\ar@{}"2";"1"};
\endxy
&
\xy
(6.235,7.818)*{\cdot}="2";(-2.225,9.750)*{\circ}="1";(-9.010,4.339)*{\cdot}="7";(-9.010,-4.339)*{\bullet}="6";(-2.225,-9.750)*{\cdot}="5";(6.235,-7.818)*{\cdot}="4";(10,0)*{\cdot}="3";
{\ar@{<-}"1";"2"};{\ar@{<-}"2";"3"};{\ar@{<-}"3";"4"};{\ar@{<-}"4";"5"};{\ar@{<-}"5";"6"};{\ar@{}"6";"7"};{\ar@{}"7";"1"};
{\ar@{<-}"1";"7"};{\ar@{<-}"7";"6"};{\ar@{}"6";"5"};{\ar@{}"5";"4"};{\ar@{}"4";"3"};{\ar@{}"3";"2"};{\ar@{}"2";"1"};
\endxy
&
\xy
(6.235,7.818)*{\cdot}="2";(-2.225,9.750)*{\circ}="1";(-9.010,4.339)*{\bullet}="7";(-9.010,-4.339)*{\cdot}="6";(-2.225,-9.750)*{\cdot}="5";(6.235,-7.818)*{\cdot}="4";(10,0)*{\cdot}="3";
{\ar@{<-}"1";"2"};{\ar@{<-}"2";"3"};{\ar@{<-}"3";"4"};{\ar@{<-}"4";"5"};{\ar@{<-}"5";"6"};{\ar@{<-}"6";"7"};{\ar@{}"7";"1"};
{\ar@{<-}"1";"7"};{\ar@{}"7";"6"};{\ar@{}"6";"5"};{\ar@{}"5";"4"};{\ar@{}"4";"3"};{\ar@{}"3";"2"};{\ar@{}"2";"1"};
\endxy
\\
\frac 17 (1,6) & \widetilde{Q}^{1}, (x:y^6)  & \widetilde{Q}^{2}, (x^2:y^5) & \widetilde{Q}^{3}, (x^3:y^4) & \widetilde{Q}^{4}, (x^4:y^3) & \widetilde{Q}^{5}, (x^5:y^2) & \widetilde{Q}^{6}, (x^6:y)
\end{array}$$
\caption{The supporting subquivers of the $\ell_{\mathcal{P}} = 1$ almost large modules over the McKay quiver algebras of type $\frac 17 (1,b)$.}
\label{cyclic example}
\end{figure}
%
%
\begin{Example} \label{cyclic} \rm{The supporting subquivers $\widetilde{Q}^m$ of the $\ell_{\mathcal{P}} = 1$ almost large modules over the $\frac 17 (1,b)$ McKay quiver algebra $A$ with $1 \leq b \leq 6$ are shown in figure \ref{cyclic example}.
} \end{Example}  

\begin{Remark} \rm{ 
Ishii showed that for small finite subgroups $G \subset \operatorname{GL}_2(\mathbb{C})$, the $G$-Hilbert scheme $\operatorname{Hilb}^G(\mathbb{C}^2)$ coincides with the minimal resolution of $\mathbb{C}^2/G$ using Wunram's special representations of $G$ \cite[Theorem 3.1]{I}.  It would therefore be interesting to understand how special representations are related to almost large modules.
}\end{Remark}

A cyclic quotient surface singularity is Gorenstein if and only if it is of type $\frac 1n (1,-1)$, in which case the McKay quiver algebra coincides with the $A_n$ preprojective algebra.

\begin{Corollary} Let $A$ be the $A_n$ preprojective algebra, and let $\pi: Y \rightarrow \mathbb{C}^2/\mu_n$ be the minimal resolution of the $A_n$ surface singularity.  The irreducible component $E_i$ of $\pi^{-1}(0)$, associated to the vertex $i \in Q_0$ by the McKay correspondence, shrinks to the vertex simple $A$-module $S_i$.
\end{Corollary}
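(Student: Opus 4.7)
The plan is to deduce the corollary from Theorem \ref{cyclic theorem} together with an explicit computation of the limiting module $V_0$ for each $\mathbb{P}^1$-family, following the framework of section \ref{Shrinking families of almost large modules}. The $A_n$ preprojective algebra $A$ is the McKay quiver algebra for $\mu_n$ acting as $\frac{1}{n}(1,n-1)$, so Theorem \ref{cyclic theorem} applies. Fixing the socle vertex $0 \in Q_0$, I would associate to each $i \in Q_0 \setminus \{0\}$ the supporting subquiver $\widetilde{Q}^{m(i)} = Q^{m(i)}$ consisting of the $a$-path $i \to i+1 \to \cdots \to 0$ together with the $b$-path $i \to i-1 \to \cdots \to 0$; the extra ``dotted'' arrows are absent here because these two paths cover every vertex of $Q$. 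Under the McKay labeling the source vertex of $Q^{m(i)}$ is $i$, and by Theorem \ref{cyclic theorem} this subquiver supports the $\mathbb{P}^1$-family parameterizing the irreducible component $E_i$.

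Next I would parameterize that $\mathbb{P}^1$-family via the procedure of section \ref{Determining P^n families} by freezing all arrow scalars on the two paths to $1$ except the two emanating from $i$, which I take to be $z_1 := \bar{\tau}(a_i)$ and $z_2 := \bar{\tau}(b_i)$. This yields an algebra monomorphism $\sigma : A \to \operatorname{End}_{\mathbb{C}[z_1,z_2]}(\mathbb{C}[z_1,z_2]^{\oplus n})$. Solving for the isomorphism $\phi_\lambda : V_{(z_1,z_2)} \to V_{(\lambda z_1, \lambda z_2)}$ is a routine diagonal computation: each frozen arrow forces the scalar values of $\phi_\lambda$ at its endpoints to agree, while each free arrow scales $\phi_\lambda$ by a factor of $\lambda$ from its tail to its head. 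Normalizing $\phi_{\lambda,i} = 1$, I obtain $\phi_{\lambda,k} = \lambda$ for every $k \neq i$; in particular, the two path computations of $\phi_{\lambda,0}$ agree because exactly one free arrow lies on each of the two paths from $i$ to $0$.

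Taking the limit $\lambda \to 0$ then yields $\phi_0$ with identity action at vertex $i$ and zero action at every other vertex, so $V_0 = V_z/\ker\phi_0 \cong S_i$. Since each member of the $\mathbb{P}^1$-family has the $1$-dimensional socle $S_0$, Lemma \ref{1-dim socle} guarantees that $V_0$ is independent of the choice of $\phi_\lambda$. Because $V_0 = S_i$ is itself simple, Definition \ref{shrinktozerosize} then gives that the $\mathbb{P}^1$ parameterizing $E_i$ shrinks to the vertex simple $S_i$, which is precisely the statement of the corollary. The main technical point to verify is the consistency of the two path computations of $\phi_{\lambda,0}$, for which the proposed normalization (exactly one free arrow on each path) is essential; everything else reduces to bookkeeping along the two linear paths of the subquiver $Q^{m(i)}$.
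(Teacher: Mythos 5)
Your proposal is correct and is essentially the argument the paper intends: the corollary is stated without a separate proof because it is exactly the combination of Theorem \ref{cyclic theorem} (which, for type $\frac 1n(1,-1)$, identifies $E_i$ with the $\mathbb{P}^1$-family supported on the two-armed subquiver with source $i$ and sink $0$, with no extra dotted arrows) and the shrinking computation of section \ref{Shrinking families of almost large modules}, which the paper carries out for precisely this cyclic quiver in figure \ref{P^1 family} ($\phi_\lambda$ equal to $1$ at the source and $\lambda$ elsewhere, degenerating to the vertex simple at the source as $\lambda \to 0$). Your normalization and the consistency check along the two arms match the paper's figure, and the appeal to Lemma \ref{1-dim socle} and Definition \ref{shrinktozerosize} is the right way to make the conclusion precise.
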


\subsection{$D_n$ and $E_6$ surface singularities} \label{D_n example}

Consider the linear action of the binary dihedral group of order $4n$, 
$$\operatorname{BD}_{4n}:= \left\langle g,j \ | \ g^{2n} = e, g^n = j^2, gjg = j \right\rangle,$$
on $\mathbb{C}[x,y]$ by the representation 
$$\rho(g) = \left[ \begin{array}{cc} e^{\pi i/n} & 0 \\ 0 & e^{-\pi i/n} \end{array} \right], \ \ \ \ \ \rho(j) = \left[ \begin{array}{cc} 0 & 1 \\ -1 & 0 \end{array} \right],$$
that is, $g \cdot (x,y) = \left( e^{\pi i/n}x, e^{-\pi i/n}y \right)$ and $j \cdot (x,y) = (y,-x)$.  Similarly, consider the linear action on $\mathbb{C}[x,y]$ of the binary tetrahedral group 
$$\operatorname{BT} := \left\{ \pm 1, \pm i, \pm j, \pm k, \frac 12 \left( 1\pm i \pm j \pm k \right) \right\} \subset \mathbb{H},$$
where all possible sign combinations occur, by the representation
$$\rho(i) =  \left[ \begin{array}{cc} i & 0 \\ 0 &-i \end{array} \right], \ \rho(j) = \left[ \begin{array}{cc} 0 & 1 \\ -1 & 0 \end{array} \right], \ \rho(k) = \left[ \begin{array}{cc} 0 & i \\ i & 0 \end{array} \right].$$
The ring of invariants $R = \mathbb{C}[x,y]^{\rho(\operatorname{BD}_{4n})}$ and $R = \mathbb{C}[x,y]^{\rho(\operatorname{BT})}$ are the respective coordinate rings for the $D_{n+2}$ and $E_6$ Kleinian singularities $\operatorname{Max}R := \mathbb{C}^2/\rho(\operatorname{BD}_{4n})$ and $\mathbb{C}^2/\rho(\operatorname{BT})$ .

Denote by $Q$ the McKay quiver of $(\operatorname{BD_{4n}}, \rho)$ (resp.\ $(\operatorname{E_{6}}, \rho)$), shown in figure \ref{D_n singularity} (resp.\ figure \ref{E_6 singularity}), and let $A$ be the preprojective algebra $A = \mathbb{C}Q/\left\langle \sum_i [a_i, \bar{a}_i ] \right\rangle$.  $A$ is module-finite over its center $Z$, and $Z \cong R$ (this follows since $A$ is Morita equivalent to the corresponding skew group ring $\mathbb{C}[x,y] * \operatorname{BD}_{4n}$ or $\mathbb{C}[x,y] * \operatorname{BT}$ \cite[proof of Proposition 2.13]{RV} which has center $R$, and Morita equivalent rings have isomorphic centers).  
Moreover, the smooth locus of $\operatorname{Max}R$ parameterizes the large $A$-modules,\footnote{This follows, for example, since the moduli space of $\theta$-stable modules with $\theta =0$ and dimension vector $d$ coincides with the smooth locus of $\operatorname{Max}R$, and the only nonzero stable modules with $\theta =0$ are simple.} and this fact is extended in Theorem \ref{D_n theorem}, where we give strong evidence that Conjecture \ref{conjecture} holds for the $D_{n+2}$ and $E_6$ surface singularities and their respective noncommutative coordinate rings $A$.

The following lemma is known, but we give a proof for completeness.

\begin{Lemma} \label{large lemma}
Let $A = \mathbb{C}Q/\left\langle \sum_i [a_i, \bar{a}_i ] \right\rangle$ be the preprojective algebra of a quiver $Q'$ whose underlying graph is extended Dynkin.  Let $d_i$ be the dimension of the irreducible representation of $G$ corresponding to vertex $i$.  Then the dimension vector of any large $A$-module is $d = ( d_i )_{i \in Q_0}$.
\end{Lemma}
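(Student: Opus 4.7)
The plan is to invoke the classical Morita equivalence $A \sim S := \mathbb{C}[x,y]*G$ referenced earlier via \cite{RV}, and then translate a classification of simple $S$-modules into a statement about dimension vectors of simple $A$-modules.

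First I would classify the simple $S$-modules. Since $G \subset \operatorname{SL}_2(\mathbb{C})$ acts freely on $\mathbb{C}^2 \setminus \{0\}$, a standard orbit analysis of skew group algebras shows that the simple $S$-modules fall into two families. The first consists of modules supported on a single free $G$-orbit in $\mathbb{C}^2 \setminus \{0\}$; such a simple is induced from the residue field at a single orbit point, and so its underlying $G$-structure is the regular representation $\mathbb{C}G \cong \bigoplus_{i \in Q_0} V_i^{\oplus d_i}$. The second consists of those annihilated by the maximal ideal $(x,y) \subset \mathbb{C}[x,y]$, namely the irreducible $G$-representations $V_j$ pulled back along the surjection $S \twoheadrightarrow \mathbb{C}G$.

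Next I would transfer this through Morita equivalence. In the standard realization one writes $A \cong \operatorname{End}_S(P)^{\mathrm{op}}$, where $P = \bigoplus_i S f_i$ is built from a decomposition $1 = \sum_i f_i \in \mathbb{C}G \subset S$ of the identity into primitive idempotents indexed by the irreducibles $V_i$ of $G$. Under this equivalence the vertex idempotent $e_i \in A$ corresponds to projection onto the $V_i$-isotypic component, so for an $A$-module $N$ matched with an $S$-module $M$ one has $\dim_{\mathbb{C}}(e_i N) = \dim_{\mathbb{C}}\operatorname{Hom}_G(V_i,M)$, the multiplicity of $V_i$ in $M$ viewed as a $G$-representation. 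Applying this dictionary, simples of the first family yield $A$-modules with dimension vector $(d_i)_{i \in Q_0}$ and total $\mathbb{C}$-dimension $\sum_i d_i$, while simples of the second family yield the vertex simples $S_j$, each of total dimension $1$. Since $\sum_i d_i > 1$ in every extended Dynkin case, the simple $A$-modules of maximum $\mathbb{C}$-dimension are precisely those in the first family, with dimension vector $d = (d_i)$, as claimed.

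The main obstacle is pinning down the identification $\dim(e_i N) = \dim\operatorname{Hom}_G(V_i,M)$ carefully, which depends on knowing the explicit form of the progenerator $P$ and how its indecomposable summands are labeled by vertices of the McKay quiver; this can be extracted from \cite{RV} or verified by matching idempotent decompositions on both sides. Once this labeling is in hand, the classification on the $S$-side is routine and the conclusion is immediate.
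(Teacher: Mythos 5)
Your argument is correct in outline, but it takes a genuinely different route from the paper. The paper's proof is root-theoretic: it invokes Crawley-Boevey's classification of the dimension vectors of simple modules over (deformed) preprojective algebras \cite{CB1}, noting that for an extended Dynkin quiver the real roots of the Tits form are the coordinate vectors $\epsilon_i$ and the imaginary roots are the multiples $md$ of $\delta = d$, and then checking that $p(md) = 1 < m\, p(d)$ for $m \geq 2$ rules out all imaginary roots except $d$ itself; this immediately yields that the simples have dimension vector $\epsilon_i$ or $d$, and the latter are the large ones. Your approach instead passes through the Morita equivalence $A \sim \mathbb{C}[x,y]*G$ of Reiten--Van den Bergh (which the paper does cite, but only to identify the center), classifies the simple modules of the skew group algebra by the $G$-orbit supporting them (regular representation over a free orbit, irreducibles of $G$ at the origin), and translates multiplicities of $V_i$ into the dimensions $\dim_{\mathbb{C}}(e_iN)$ via the progenerator $\bigoplus_i Sf_i$. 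Both are complete arguments; yours is more geometric and makes the McKay-correspondence content of the statement transparent (and also recovers, for free, that the non-large simples are exactly the vertex simples), while the paper's is shorter once Crawley-Boevey's theorem is taken as a black box and generalizes immediately to deformed preprojective algebras where no skew-group-algebra model is available. The one point you rightly flag --- that the indecomposable summands $Sf_i$ of the progenerator are labeled by vertices compatibly with the McKay quiver, so that $e_iN \cong f_iM \cong \operatorname{Hom}_G(V_i, M)$ --- is exactly the content of the Reiten--Van den Bergh construction and can be cited rather than reproved, so there is no gap.
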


\begin{proof}
By \cite[p.\ 18, (1) and (3)]{CB2} when $Q'$ is extended Dynkin the real roots of the corresponding (positive semi-definite) Tits form $q: \mathbb{Z}^{|Q_0|} \rightarrow \mathbb{Z}$ are the coordinate vectors $\epsilon_i$, while the imaginary roots are the nonzero integer multiples of $d$.  Thus for $m \in \mathbb{Z}$, $p(m d) := 1+ q(m d) = 1$, and so for $m \geq 2$ we have $p(m d) = 1 < m = m p(d)$.  Apply \cite[Theorem 1.2]{CB1}.
\end{proof}

\begin{Lemma} \label{socle S_k}
Let $A = \mathbb{C}Q/I$ be a quiver algebra and let $V$ be an $A$-module with pulled-apart supporting subquiver $\widetilde{Q}$ (with respect to some basis).  Suppose that (i) $k \in \widetilde{Q}_0$ is a sink in $\widetilde{Q}$, (ii) there is a path in $\widetilde{Q}$ from each vertex $i \in \widetilde{Q}_0$ to $k$, and (iii) if $\operatorname{h}(a) = \operatorname{h}(b)$ for distinct $a, b \in \widetilde{Q}_1$ then $a$ and $b$ correspond to distinct arrows in $Q_1$.  Then the socle of $V$ is isomorphic to the vertex simple $S_k$. 
\end{Lemma}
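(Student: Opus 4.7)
The plan is to prove the two inclusions $\mathbb{C} v_k \subseteq \operatorname{Soc}V$ and $\operatorname{Soc}V \subseteq \mathbb{C} v_k$ separately, where $v_k \in V = \mathbb{C}^d$ denotes the standard basis vector indexed by $k \in \widetilde{Q}_0$ and $\pi : \widetilde{Q}_0 \to Q_0$ is the natural projection (so $v_i \in e_{\pi(i)} V$). The first inclusion follows immediately from (i): a sink $k$ in $\widetilde{Q}$ has no outgoing arrow in $\widetilde{Q}$, which by the definition of the pulled-apart subquiver means $(\rho(a))_{j,k} = 0$ for every $a \in Q_1$ and every $j \in \widetilde{Q}_0$. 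Hence every arrow annihilates $v_k$, and $\mathbb{C} v_k$ is a $1$-dimensional submodule isomorphic to the vertex simple $S_{\pi(k)}$.

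For the reverse inclusion, I would take any simple submodule $W \subseteq V$. Since $A$ is a basic quiver algebra, $W$ is a vertex simple $S_{j_0}$ for some $j_0 \in Q_0$, so $W = \mathbb{C} w$ with $w \in e_{j_0}V$ and $a \cdot w = 0$ for every $a \in Q_1$. Expanding $w = \sum_{i \in \pi^{-1}(j_0)} c_i v_i$ in the pulled-apart basis and reading off the $\ell$-th coordinate of $\rho(a) w = 0$ gives
\[
\sum_{i \in \pi^{-1}(j_0)} c_i \, (\rho(a))_{\ell, i} \;=\; 0 \qquad \text{for every } \ell \in \pi^{-1}(\operatorname{h}(a)).
\]
The key step is the translation of condition (iii): the arrows of $\widetilde{Q}_1$ lying over $a$ and having head $\ell$ are, by (iii), all distinct as arrows of $Q_1$, so there is at most one such arrow, equivalently at most one $i$ with $(\rho(a))_{\ell, i} \neq 0$. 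Each of the displayed sums thus has at most one nonzero summand, so vanishing forces $c_i = 0$ whenever there is some $\tilde a \in \widetilde{Q}_1$ with $\operatorname{t}(\tilde a) = i$.

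Consequently the support of $w$ is contained in the set of sinks of $\widetilde{Q}$ that lie in $\pi^{-1}(j_0)$. But by (ii) every vertex of $\widetilde{Q}$ admits a path to $k$, and a sink admits only the trivial outgoing path, so $k$ is the unique sink of $\widetilde{Q}$. Hence $\operatorname{supp}(w) \subseteq \{k\}$, which gives $w \in \mathbb{C} v_k$ and $j_0 = \pi(k)$; every simple submodule therefore equals $\mathbb{C} v_k$, and $\operatorname{Soc}V = \mathbb{C} v_k \cong S_k$. I expect the only step needing real care to be the translation of (iii) into the assertion that each coordinate equation has at most one contributing index; once that is in hand, (i) and (ii) slot in with essentially no calculation.
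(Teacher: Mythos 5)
Your first inclusion and your translation of (iii) are both correct: (i) does give that $\mathbb{C}v_k$ is a submodule isomorphic to $S_{\pi(k)}$, and (iii) is exactly the statement that for each $a\in Q_1$ and each row index $\ell$ the row $(\rho(a))_{\ell,\bullet}$ has at most one nonzero entry, so your coordinate computation correctly shows that any vector killed by every arrow is supported on the sinks of $\widetilde{Q}$, hence lies in $\mathbb{C}v_k$ by (ii). The gap is the sentence ``Since $A$ is a basic quiver algebra, $W$ is a vertex simple.'' That is false for quiver algebras with non-admissible relations, which is precisely the setting of this paper: $A=\mathbb{C}Q/I$ is infinite-dimensional and module-finite over its center, and its large simple modules (dimension vector $(1,1,2,\ldots,2,1,1)$ for $D_{n+2}$, etc.) are simple modules that are not vertex simples; even one loop acting by a nonzero scalar on a one-dimensional module produces a non-vertex simple. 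What your computation actually identifies is $\operatorname{ann}_V\langle Q_1\rangle=\mathbb{C}v_k$, i.e.\ the sum of the vertex-simple submodules of $V$. To conclude this is the whole socle you must still exclude a simple submodule $W$ on which some arrow acts nontrivially (equivalently with $JW=W$ for $J$ the arrow ideal), and conditions (i)--(iii) do not forbid oriented cycles in $\widetilde{Q}$ away from $k$, so this case is not vacuous.

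The missing step is the one the paper's proof supplies: any nonzero $v\in V$ can be pushed by arrows to a nonzero vector in the line at $k$. Concretely, let $h(i)$ be the length of a shortest path from $i$ to $k$ in $\widetilde{Q}$ (finite by (ii)); if $v=\sum c_iv_i\neq 0$ is not in $\mathbb{C}v_k$, pick $i_0$ in its support with $h(i_0)>0$ minimal and an arrow $\widetilde{a}\colon i_0\to\ell$ of $\widetilde{Q}$ with $h(\ell)=h(i_0)-1$, lying over $a\in Q_1$; by (iii) the $\ell$-coordinate of $\rho(a)v$ is $c_{i_0}(\rho(a))_{\ell,i_0}\neq 0$, so $av\neq 0$ and the minimal height strictly drops. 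Iterating, every nonzero submodule of $V$ contains a vector with nonzero $v_k$-coordinate and, continuing, contains $\mathbb{C}v_k$ itself; hence every simple submodule equals $\mathbb{C}v_k$. Your (iii)-computation is the degenerate case of this descent (the case where no arrow moves $v$ at all), but the descent itself is what rules out non-vertex-simple submodules, and it is absent from your argument.
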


\begin{proof}
Let $j \in Q_0$ and consider a nonzero vector $v \in e_jV$ that is not in the 1-dimensional vector space at $k$.  If $a \in Q_1e_j$ satisfies $0 \not = av =: w$ then consider $w$ in place of $v$; otherwise if $av = 0$ then by (ii) and (iii) there exists a $b \in Q_1e_j$ such that $0 \not = bv =: w$.  Since $Q$ is finite, by (ii) we may iterate this process a finite number of times until $w \not = 0$ is in the 1-dimensional vector space at $k$.  The isomorphism $\operatorname{Soc}V \cong S_k$ then follows by (i).
(Note that if the dimension vector of $V$ is not $(1,\ldots, 1)$ then (i) and (ii) alone are not sufficient to imply $\operatorname{Soc}V \cong S_k$.)
\end{proof}

\begin{Lemma} \label{cycle lemma}
Let $A = \mathbb{C}Q/I$ be the preprojective algebra of an extended Dynkin quiver, and let $d = (d_i)_{i \in Q_0}$ be the dimension vector of a large $A$-module.  Suppose $V \in \operatorname{Rep}_dA$ and $d_k = 1$.  If there is a cycle $c \in e_{\operatorname{t}(c)}A$ such that $c^n \not \in \operatorname{ann}_AV$ for all $n \geq 1$ then $\operatorname{Soc}V$ cannot be isomorphic to the vertex simple $S_k$.
\end{Lemma}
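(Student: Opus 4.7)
The plan is to argue by contradiction: assume $\operatorname{Soc} V \cong S_k$ and show that $c$ acts nilpotently on $V$, contradicting the hypothesis that $c^n \not\in \operatorname{ann}_A V$ for all $n$. The strategy is to identify every composition factor of $V$ as a vertex simple, on which every positive-length path vanishes.

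By Lemma \ref{large lemma}, $d$ equals the minimal imaginary root of the underlying extended Dynkin quiver; in particular $d \neq \epsilon_k$, so $\dim_{\mathbb{C}} V > 1 = \dim_{\mathbb{C}} S_k$ and $V$ properly contains $S_k$. Consequently $V$ is not simple. Any composition series $0 = V_0 \subsetneq V_1 \subsetneq \cdots \subsetneq V_r = V$ therefore has $r \geq 2$, and the simple subquotients $V_i/V_{i-1}$ have dimension vectors $\alpha_i \neq d$ summing to $d$ (so each $\alpha_i$ is bounded strictly by $d$ componentwise).

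The main step is to show that each $\alpha_i$ is a coordinate vector, i.e.\ each $V_i/V_{i-1}$ is a vertex simple. For this I would invoke the Morita equivalence between $A$ and the skew group algebra $\mathbb{C}[x,y] * G$ (\cite[proof of Proposition 2.13]{RV}), where $G \subset \operatorname{SL}_2(\mathbb{C})$ is the binary polyhedral group corresponding to the Dynkin type. Simple $\mathbb{C}[x,y] * G$-modules are parameterized by $G$-orbits on $\mathbb{C}^2$ together with an irreducible representation of the stabilizer; and since $G$ acts freely on $\mathbb{C}^2 \setminus \{0\}$, every such simple is either attached to a free orbit (corresponding on the preprojective side to a simple of dimension vector $d$) or attached to the origin, where it is an irreducible representation of $G$. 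Under the McKay correspondence, the latter are precisely the vertex simples $S_j$ of $A$. Hence every simple $A$-module of dimension vector different from $d$ is a vertex simple, and each $V_i/V_{i-1} \cong S_{j_i}$ for some $j_i \in Q_0$.

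With every subquotient a vertex simple, every arrow of $Q$, and in particular every positive-length path such as $c$, acts as zero on each $V_i/V_{i-1}$. Equivalently, the operator by which $c$ acts on $V$ is strictly block upper triangular with respect to the filtration $\{V_i\}$, so this operator raised to the $r$-th power is zero; therefore $c^r \in \operatorname{ann}_A V$, contradicting the hypothesis. The principal obstacle is the second step, namely the classification of simples of $A$ with dimension vector strictly below $d$; an alternative to the Morita-equivalence route is to appeal directly to Crawley-Boevey's root-theoretic description of dimension vectors of simples of preprojective algebras (\cite{CB1}, \cite{CB2}), specialized to the extended Dynkin case, to rule out any non-coordinate positive root as the dimension vector of a simple. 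The hypothesis $d_k = 1$ plays no direct role in the argument above; it is included so that the lemma interfaces with condition (\ref{e in L}) of Conjecture \ref{conjecture}.
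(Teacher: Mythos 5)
Your proof is correct, but it takes a genuinely different route from the paper's. The paper argues directly on $V$: since the action of $c$ on $e_{\operatorname{t}(c)}V$ is not nilpotent, there is an eigenvector $v$ with $c^m v = \gamma v$ for some $\gamma \in \mathbb{C}^*$, and this is combined with a combinatorial claim about the preprojective relations --- that for sufficiently large $r$ the element $(c^m)^r$ has, modulo $I$, a term passing through $e_k$ --- and the fact that $e_kV$ would be the one-dimensional socle, to reach a contradiction from $c^{mr}v = \gamma^r v \neq 0$. You instead route the argument through the classification of simple modules of an extended Dynkin preprojective algebra: by \cite[Theorem 1.2]{CB1} (already invoked in Lemma \ref{large lemma}), the dimension vectors of simples are exactly the coordinate vectors and $d$ itself, so a non-simple $V \in \operatorname{Rep}_dA$ has all composition factors isomorphic to vertex simples, whence the arrow ideal --- and in particular every cycle --- acts nilpotently on $V$. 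This is cleaner and proves strictly more: the hypotheses $d_k = 1$ and $\operatorname{Soc}V \cong S_k$ enter only to guarantee that $V$ is not simple, and you conclude that \emph{every} cycle annihilates $V$ after finitely many iterations. The price is importing the classification of simples (via Crawley-Boevey, or via the Morita equivalence with $\mathbb{C}[x,y]*G$ as in \cite{RV}; both sources are already cited in the paper, so this is legitimate), whereas the paper's argument stays at the level of explicit path manipulations in $A$. One small imprecision: the dimension vectors $\alpha_i$ of the composition factors need not be bounded strictly by $d$ in every component (e.g.\ $\alpha_i = \epsilon_j$ with $d_j = 1$); what you actually need, and what holds, is that $\alpha_i \leq d$ and $\alpha_i \neq d$, which already rules out the imaginary roots $m d$ and forces each $\alpha_i$ to be a coordinate vector.
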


\begin{proof}
If $c^nV \not = 0$ for all $n \geq 1$ then there is an eigenvector $v \in e_{\operatorname{t}(c)}V \subset V$ such that $c^mv = \gamma v$ for some $m \geq 1$ and $\gamma \in \mathbb{C}^*$.  But then for sufficiently large $r$, $e_k$ is a subpath of a term of $(c^m)^r$ modulo $I$ (using the preprojective relations $I$), and the lemma follows since $c^{mr}v = \gamma^r v \not = 0$.
\end{proof}

\begin{figure}
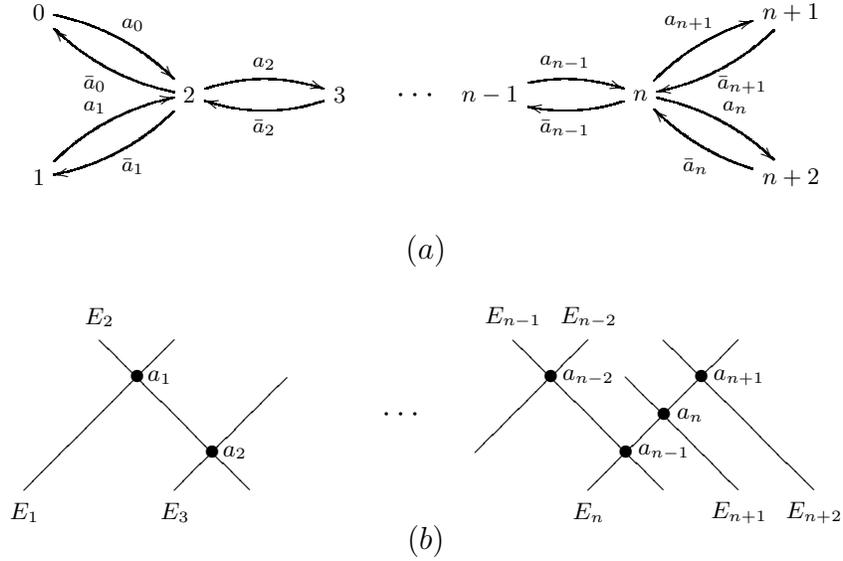

$$\begin{array}{c}
\xy
(-50,11)*+{\text{\scriptsize{$0$}}}="0";
(-50,-11)*+{\text{\scriptsize{$1$}}}="1";
(-30,0)*+{\text{\scriptsize{$2$}}}="2";
(-10,0)*+{\text{\scriptsize{$3$}}}="3";
(0,0)*{\cdots}="";
(10,0)*+{\text{\scriptsize{$n-1$}}}="n-1";
(30,0)*+{\text{\scriptsize{$n$}}}="n";
(50,-11)*+{\text{\scriptsize{$n+2$}}}="n+1";
(50,11)*+{\text{\scriptsize{$n+1$}}}="n+2";
{\ar@/^/^{a_0}"0";"2"};
{\ar@/^/^{a_1}"1";"2"};
{\ar@/^/^{a_2}"2";"3"};
{\ar@/^/^{a_{n-1}}"n-1";"n"};
{\ar@/^/^{a_n}"n";"n+1"};
{\ar@/^/^{a_{n+1}}"n";"n+2"};
{\ar@/^/^{\bar{a}_0}"2";"0"};
{\ar@/^/^{\bar{a}_1}"2";"1"};
{\ar@/^/^{\bar{a}_2}"3";"2"};
{\ar@/^/^{\bar{a}_{n-1}}"n";"n-1"};
{\ar@/^/^{\bar{a}_n}"n+1";"n"};
{\ar@/^/^{\bar{a}_{n+1}}"n+2";"n"};
\endxy\\
\\
(a)\\
\\
\xy 
(-50,-10)*{}="1";
(-40,10)*{}="2";
(-35,5)*{\bullet}="3";
(-32,5)*{\text{\scriptsize{$a_1$}}}="";
(-30,10)*{}="4";
(-30,-10)*{}="5";
(-25,-5)*{\bullet}="6";
(-22,-5)*{\text{\scriptsize{$a_2$}}}="";
(-15,5)*{}="7";
(10,-5)*{}="8";
(15,10)*{}="9";
(20,5)*{\bullet}="10";
(25,5)*{\text{\scriptsize{$a_{n-2}$}}}="";
(25,10)*{}="11";
(25,-10)*{}="12";
(30,-5)*{\bullet}="13";
(35,-5)*{\text{\scriptsize{$a_{n-1}$}}}="";
(30,5)*{}="14";
(35,0)*{\bullet}="15";
(38.5,0)*{\text{\scriptsize{$a_n$}}}="";
(35,10)*{}="16";
(40,5)*{\bullet}="17";
(45,5)*{\text{\scriptsize{$a_{n+1}$}}}="";
(45,10)*{}="18";
(45,-10)*{}="19";
(55,-10)*{}="20";
(-20,-10)*{}="21";
(35,-10)*{}="22";
(-50,-13)*{\text{\scriptsize{$E_1$}}}="";
(-40,13)*{\text{\scriptsize{$E_2$}}}="";
(-30,-13)*{\text{\scriptsize{$E_3$}}}="";
(15,13)*{\text{\scriptsize{$E_{n-1}$}}}="";
(25,13)*{\text{\scriptsize{$E_{n-2}$}}}="";
(25,-13)*{\text{\scriptsize{$E_n$}}}="";
(45,-13)*{\text{\scriptsize{$E_{n+1}$}}}="";
(55,-13)*{\text{\scriptsize{$E_{n+2}$}}}="";
{\ar@{-}"1";"4"};
{\ar@{-}"2";"21"};
{\ar@{-}"5";"7"};
{\ar@{-}"8";"11"};
{\ar@{-}"9";"22"};
{\ar@{-}"12";"18"};
{\ar@{-}"14";"19"};
{\ar@{-}"16";"20"};
(0,0)*{\cdots}="";
\endxy
\\
(b)
\end{array}$$
\caption{(a) The $D_{n+2}$ McKay quiver $Q$.  (b) The exceptional locus of the minimal resolution of the $D_{n+2}$ singularity (each edge is a $\mathbb{P}^1$).  By the McKay correspondence, there is an arrow $i \rightarrow j$ in $Q$ 
iff the intersection $E_i \cap E_j$ is nonempty.}
\label{D_n singularity}
\end{figure}

\begin{figure}
$$\begin{array}{c}
\xy
(0,-21.5)*+{0}="0";
(0,-4)*+{1}="4";
(0,13.5)*+{2}="1";
(-21,13.5)*+{3}="2";(21,13.5)*+{4}="3";
(-42,13.5)*+{5}="2'";(42,13.5)*+{6}="3'";
{\ar@/^/^{a_0}"0";"4"};
{\ar@/^/^{\bar{a}_0}"4";"0"};
{\ar@/^/^{a_1}"4";"1"};
{\ar@/^/^{\bar{a}_1}"1";"4"};
{\ar@/^/^{a_{5}}"2'";"2"};
{\ar@/^/^{\bar{a}_{5}}"2";"2'"};
{\ar@/^/^{a_3}"2";"1"};
{\ar@/^/^{\bar{a}_{3}}"1";"2"};
{\ar@/_/_{a_4}"3";"1"};
{\ar@/_/_{\bar{a}_4}"1";"3"};
{\ar@/_/_{a_{6}}"3'";"3"};
{\ar@/_/_{\bar{a}_{6}}"3";"3'"};
\endxy\\
\\
(a)\\
\\
\xy
(0,20)*{}="1";
(0,5)*{\bullet}="2";
(0,-8)*{}="3";
(-20,5)*{\bullet}="5";
(20,5)*{\bullet}="16";
(-30,-10)*{\bullet}="8";
(30,-10)*{\bullet}="13";
(-15,-25)*{}="10";
(15,-25)*{}="11";
(-29,5)*{}="6";
(29,5)*{}="15";
(-34,-16)*{}="9";
(34,-16)*{}="12";
(-16,11)*{}="4";
(16,11)*{}="17";
(-35,-5)*{}="7";
(35,-5)*{}="14";
{\ar@{-}"1";"3"};
{\ar@{-}"6";"15"};
{\ar@{-}"4";"9"};
{\ar@{-}"17";"12"};
{\ar@{-}"7";"10"};
{\ar@{-}"14";"11"};
(4,20)*{E_1}="";
(32,5)*{E_2}="";
(-13,13)*{E_3}="";
(13,13)*{E_4}="";
(-12,-28)*{E_{5}}="";
(12,-28)*{E_{6}}="";
(2.5,3)*{a_1}="";
(18.5,3)*{a_4}="";
(-18.5,3)*{a_3}="";
(-26.5,-10)*{a_{5}}="";
(26.5,-10)*{a_{6}}="";
\endxy
\\
(b)
\end{array}$$
\caption{(a) The $E_6$ McKay quiver $Q$.  (b) The exceptional locus of the minimal resolution of the $E_6$ singularity (each edge is a $\mathbb{P}^1$).} \label{E_6 singularity}
\end{figure}

Denote by $\mathcal{P}$ the path-like set $Q_{\geq 0} \cup \{0\}$.  In the following theorem, let $P_1$ denote the $\mathcal{P}$-annihilator of an $A$-module with pulled-apart supporting subquiver given in figure \ref{D_n pulled-apart sub} for the $D_{n+2}$ case and in figure \ref{E_6 pulled-apart sub} for the $E_6$ case.  We will assume that the chain
\begin{equation} \label{D_n chain}
0 \subsetneq P_1
\end{equation}
is maximal in the sense of Definition \ref{almost large}, which is expected by Lemma \ref{cycle lemma}.

\begin{Theorem} \label{D_n theorem} 
Let $A=\mathbb{C}Q/I$ be the $D_{n+2}$ (resp.\ $E_6$) preprojective algebra, let 
$$\pi: Y \rightarrow \mathbb{C}^2/\rho(\operatorname{BD}_{4n}) \ \ \ \ \ \left(\text{resp.\ \ } \pi: Y \rightarrow \mathbb{C}^2/\rho(\operatorname{BT})\right)$$ 
be the minimal resolution of the Gorenstein $D_{n+2}$ (resp.\ $E_6$) surface singularity, and fix a vertex $k \in \left\{ 0,1,n+1,n+2 \right\}$ (resp.\ $k \in \left\{ 0,5,6 \right\}$).  If the chain (\ref{D_n chain}) is maximal (which is expected), then the exceptional locus $\pi^{-1}(0)$ parameterizes the almost large $A$-modules with socles isomorphic to the vertex simple $S_k$.  Furthermore, the irreducible component $E_i$ of $\pi^{-1}(0)$, associated to the vertex $i \in Q_0$ by the McKay correspondence, shrinks to the vertex simple $A$-module $S_i$.
\end{Theorem}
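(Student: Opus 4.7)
The plan is to imitate the strategy of Theorem~\ref{cyclic theorem}, with the path combinatorics on the cyclic McKay quiver replaced by combinatorics on the Dynkin diagram. First, by Lemma~\ref{large lemma} the dimension vector of every large $A$-module is the vector $d=(d_i)_{i\in Q_0}$ of dimensions of the irreducible representations of $\operatorname{BD}_{4n}$ (resp.\ $\operatorname{BT}$); in particular each chosen leaf vertex $k$ has $d_k=1$. Since $d_k=1$, Lemma~\ref{cycle lemma} implies that any module with socle $S_k$ must send every cycle in $Q$ to a nilpotent operator, which is what makes the assumed maximality of the chain~(\ref{D_n chain}) plausible and is the key combinatorial input for the arguments that follow.

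Next, for each irreducible component $E_i$ of the exceptional locus I would construct an explicit pulled-apart supporting subquiver $\widetilde{Q}^i$ by running the algorithm of section~\ref{Determining P^n families} with sink at $k$. In the $D_{n+2}$ case, starting from $k$ one unfolds the two possible ``arms'' of the Dynkin diagram, distributing the rank-$2$ spaces at the interior vertices so that the preprojective relation $\sum_j [a_j,\bar a_j]=0$ is automatically satisfied at every vertex; the choice of how far along to unfold is in bijection with the $n+2$ components $E_1,\dots,E_{n+2}$. For $E_6$ the construction is analogous, with the additional subtlety of the trivalent central vertex of dimension $3$, but again each component $E_i$ yields a unique $\widetilde{Q}^i$.

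The main verification is then an analogue of steps (ii)--(xi) in the proof of Theorem~\ref{cyclic theorem}: each $\widetilde{Q}^i$ supports an $A$-module in $\operatorname{Rep}_d A$; the preprojective relations force the scalar factors along each ``rail'' of $\widetilde{Q}^i$ to be equal, so that solving the isomorphism parameters as in section~\ref{Determining P^n families} yields a $\mathbb{P}^1$-family of module isoclasses; the socle is $S_k$ by Lemma~\ref{socle S_k} applied with the sink $k$; and the boundary points of this $\mathbb{P}^1$ (where one homogeneous coordinate vanishes) are exactly the almost large modules whose supporting subquiver is shared with a neighboring $\widetilde{Q}^j$, reproducing the intersection pattern predicted by the McKay dual graph in figures~\ref{D_n singularity}(b) and~\ref{E_6 singularity}(b). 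Exhaustion (the analogue of step (xiii)) follows because any $V\in\operatorname{Rep}_d A$ with socle $S_k$ has every cycle acting nilpotently by Lemma~\ref{cycle lemma}, forcing the pulled-apart support of $V$ to embed into one of the $\widetilde{Q}^i$ (or into one of their common-boundary degenerations).

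For the shrinking statement, I would apply Definition~\ref{shrinktozerosize} directly: the isomorphism $\phi_\lambda:V_t\to V_{\lambda t}$ associated to the $\mathbb{P}^1$-family on $\widetilde{Q}^i$ has the property that, after the normalization fixing the vertex $k$, the limit $\phi_0$ has kernel equal to the direct sum of the vector spaces at every vertex except $i$, so $V_0\cong S_i$. The main obstacle I anticipate is managing the rank-$2$ (and rank-$3$ for the central $E_6$ vertex) vector spaces in the pulled-apart quivers: unlike the $(1,\ldots,1)$ case in Theorem~\ref{cyclic theorem}, one must choose bases compatible with the preprojective relations at each branch vertex, and must rule out any ``spurious'' pulled-apart quiver outside the list $\{\widetilde{Q}^i\}$ that could also support an almost large module with socle $S_k$. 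Once this bookkeeping is done, the structural argument proceeds exactly as in the cyclic case, and the correspondence $E_i\leftrightarrow S_i$ follows.
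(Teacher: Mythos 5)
Your Claim-I-type analysis (constructing the pulled-apart subquivers $\widetilde{Q}^i$, verifying the $\mathbb{P}^1$-family structure via the trivialization algorithm of section \ref{Determining P^n families}, identifying the socle with Lemma \ref{socle S_k}, using Lemma \ref{large lemma} for the dimension vector and Lemma \ref{cycle lemma} to justify maximality of the chain (\ref{D_n chain}), and matching intersection points of the $E_i$ with shared degenerate supports) is essentially the paper's argument for the inclusion $Y \subseteq \widetilde{Y}$, and your treatment of the shrinking statement via Definition \ref{shrinktozerosize} is consistent with how the paper encodes it in the figures.

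The gap is in the exhaustion step, i.e.\ the reverse inclusion $\widetilde{Y} \subseteq Y$. You assert that because every cycle acts nilpotently on a module $V \in \operatorname{Rep}_dA$ with socle $S_k$ (by Lemma \ref{cycle lemma}), the pulled-apart support of $V$ must embed into one of the $\widetilde{Q}^i$, and you defer the rest to ``bookkeeping.'' This does not follow: with dimension-vector entries equal to $2$ (and $3$ at the central $E_6$ vertex), the pulled-apart support of $V$ is only defined relative to a choice of basis at each vertex, and nilpotency of cycles alone does not single out a basis in which the support lands in your finite list --- classifying all basis-independent isomorphism classes of such $V$ is precisely the hard content of the claim, and it is not an analogue of step (xiii) of Theorem \ref{cyclic theorem}, which relied crucially on the dimension vector being $(1,\ldots,1)$. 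The paper avoids this entirely: it observes that the condition ``socle isomorphic to $S_k$'' on modules in $\operatorname{Rep}_dA$ is equivalent to stability for the parameter $\theta = \left(-1+\sum_i d_i, -1,\ldots,-1\right)$ (with first component $\theta_k$), so every almost large module with socle $S_k$ lies in the moduli space $\mathcal{M}_d^{\theta}(A)$, and then invokes Kronheimer's identification of $\mathcal{M}_d^{\theta}(A)$ with the minimal resolution $Y$ to conclude $\widetilde{Y} \subseteq Y$ (this also shows the path-like set $Q_{\geq 0}\cup\{0\}$ suffices). To complete your proof you would either need to carry out the basis-dependent classification you postponed, or import an external input of this kind.
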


\begin{proof} 
Denote by $\widetilde{Y}$ the space that parameterizes the isoclasses of almost large modules whose socles are isomorphic to $S_k$.\\
\\
\textit{Claim I:  $Y \subseteq \widetilde{Y}$.}\\
(i) \textit{$\mathbb{P}^1$-families.}  Each $\widetilde{Q}^i$ in figure \ref{D_n pulled-apart sub} (resp.\ figure \ref{E_6 pulled-apart sub}) is the support of a $\mathbb{P}^1$-family, minus the two points $(1:0)$ and $(0:1)$:\ apply the method ``Trivialize $J_0$'' in section \ref{Determining P^n families} to determine the monomorphism
$$\sigma^i: A \rightarrow \operatorname{Mat}_{2n}(\mathbb{C}[s_i,t_i]),$$
$$\left( \text{resp.\ \ \ } \sigma^i: A \rightarrow \operatorname{Mat}_{12}(\mathbb{C}[s_i,t_i]) \ \right)$$   
which is given by the labeling of $\widetilde{Q}^i$ in figure \ref{D_n pulled-apart sub} (resp.\ figure \ref{E_6 pulled-apart sub}).  Here the unlabeled arrows are represented by $\pm 1$, the sign being chosen so that the preprojective relations hold.  By lemma \ref{rho cong}, given any representation $\rho$ supported on $\widetilde{Q}^i$ there is some $z \in \mathbb{C}^2$ such that $\rho$ is isomorphic to $\epsilon_{z} \cdot \sigma$.  In the $D_{n+2}$ case: it is straightforward to check that the parameters $(s,t)$ in the example given in figures \ref{s,t}.i and \ref{s,t}.iii coincide schematically with the respective parameters $(s_i,t_i)$ for $2 \leq i \leq n$ and $i = 1,n+1,n+2$.  In the $E_6$ case: one may check that the parameters $(s,t)$ in the example given in figure \ref{s,t}.iii coincides schematically with the parameters $(s_i,t_i)$ for $i = 1, \ldots, 6$; specifically, the two dimensional vector space in figure \ref{s,t}.iii sits inside the vector space at vertex $2,3,2,3,4 \in Q_0$ respectively. $\Box$\\
\\
(ii) \textit{$\ell_{\mathcal{P}} = 1$ almost large modules.}  By lemma \ref{large lemma}, the almost large modules have dimension vector $d = (1,1,2, \ldots, 2, 1,1)$ (resp.\ $(1,2,3,2,2,1,1)$).  By lemma \ref{socle S_k}, any module supported on a pulled-apart subquiver given in figure \ref{D_n pulled-apart sub} (resp.\ figure \ref{E_6 pulled-apart sub}) has socle $S_k$.  Here we assume the chain (\ref{D_n chain}) is maximal by Lemma \ref{cycle lemma}.  $\Box$\\
\\
(iii) \textit{$\ell_{\mathcal{P}} = 2$ almost large modules.} Each intersection point $E_i \cap E_j$ in the minimal resolution corresponds to a (unique) almost large module isoclass $V$ that belongs to two $\mathbb{P}^1$-families.  Although these two families have different pulled-apart supporting subquivers, namely $\widetilde{Q}^i$ and $\widetilde{Q}^j$, $V$ is parameterized by the vanishing of a coordinate in each $\mathbb{P}^1$-family, and so the support of $V$ is properly contained in both $\widetilde{Q}^i$ and $\widetilde{Q}^j$, as shown below.\\
In the $D_{n+2}$ case:
$$\begin{array}{rll}
a_1 = E_1 \cap E_2: & \widetilde{Q}_1^1 \setminus \{ \stackrel{t_1}{\longrightarrow} \} \ = \ \widetilde{Q}_1^2 \setminus \{ \stackrel{s_2}{\longleftarrow}, \stackrel{s_2}{\longrightarrow} \}, & t_1=s_2=0\\
& \vdots & \vdots \\
a_{j} = E_j \cap E_{j+1}: & \widetilde{Q}_1^j \setminus \{ \stackrel{t_j}{\longrightarrow} \} \ = \ \widetilde{Q}_1^{j+1} \setminus \{ \stackrel{s_{j+1}}{\longleftarrow}, \stackrel{s_{j+1}}{\longrightarrow} \}, & t_j=s_{j+1}=0\\
& \vdots & \vdots \\
a_{n-1} = E_{n-1} \cap E_n: & \widetilde{Q}_1^{n-1} \setminus \{ \stackrel{t_{n-1}}{\longrightarrow} \} \ = \ \widetilde{Q}_1^{n} \setminus \{ \stackrel{s_{n}}{\longleftarrow} \}, & t_{n-1}=s_{n}=0 \\
a_{n} = E_n \cap E_{n+1}: & \widetilde{Q}_1^{n} \setminus \{ \stackrel{t_{n}}{\longrightarrow} \} \ = \ \widetilde{Q}_1^{n+1} \setminus \{ \stackrel{s_{n+1}}{\longleftarrow} \}, & t_{n} = s_{n+1}=0 \\
a_
{n+1} = E_n \cap E_{n+2}: & \widetilde{Q}_1^{n} \setminus \{ \stackrel{-s_{n}-t_{n}}{\longrightarrow} \} \ = \ \widetilde{Q}_1^{n+2} \setminus \{ \stackrel{s_{n+2}}{\longleftarrow} \}, & s_{n}+t_{n} =s_{n+2}=0
\end{array}$$
In the $E_6$ case:
$$\begin{array}{rll}
a_1 = E_1 \cap E_2: & \widetilde{Q}_1^1 \setminus \{ \stackrel{s_1}{\longrightarrow} \} \ = \ \widetilde{Q}_1^2 \setminus \{ \stackrel{s_2 + t_2}{\longrightarrow} \}, & s_1 = s_2+t_2 =0 \\
a_3 = E_3 \cap E_2: & \widetilde{Q}_1^3 \setminus \{ \stackrel{s_3}{\longrightarrow} \} \ = \ \widetilde{Q}_1^2 \setminus \{ \stackrel{s_2}{\longrightarrow} \}, & s_3= s_2=0 \\
a_4 = E_4 \cap E_2: & \widetilde{Q}_1^4 \setminus \{ \stackrel{t_4}{\longrightarrow} \} \ = \ \widetilde{Q}_1^2 \setminus \{ \stackrel{t_2}{\longrightarrow} \}, & t_4 =t_2=0 \\
a_5 = E_3 \cap E_5: & \widetilde{Q}^3_1 \setminus \{ \stackrel{t_3}{\longleftarrow}, \ \stackrel{t_3}{\longrightarrow} \} \ = \ \widetilde{Q}_1^5 \setminus \{ \stackrel{s_5}{\longrightarrow} \}, & t_3 =s_5 = 0 \\
a_6 = E_4 \cap E_6: & \widetilde{Q}^4_1 \setminus \{ \stackrel{s_4}{\longleftarrow}, \ \stackrel{s_4}{\longrightarrow} \} \ = \ \widetilde{Q}_1^6 \setminus \{ \stackrel{t_6}{\longrightarrow} \}, & s_4 = t_6 = 0.
\end{array}$$
\textit{Claim II: $Y \supseteq \widetilde{Y}$.}\\
Consider the moduli space $\mathcal{M}_d^{\theta}(A)$ of stable $A$-modules with generic stability parameter $\theta = \left( -1 + \sum_{i \in Q_0}d_i, -1, \ldots, -1 \right) \in \mathbb{Z}^{|Q_0|}$, where the first component is $\theta_k$.  This choice of $\theta$ is equivalent to restricting to modules in $\operatorname{Rep}_d(A)$ whose socles are isomorphic to $S_k$, and so any almost large module with socle $S_k$ will be $\theta$-stable.  But $\mathcal{M}_d^{\theta}(A)$ is precisely $Y$ by \cite[Corollary 3.12]{K}, proving our claim.  This is also implies that the path-like set $\mathcal{P} = Q_{\geq 0} \cup \{0\}$ is sufficient for determining all almost large modules, since the almost large modules with socle $S_k$ obtained from $Q_{\geq 0} \cup \{0\}$ exhaust the set of all modules in $\operatorname{Rep}_dA$ with socle $S_k$.
\begin{figure}
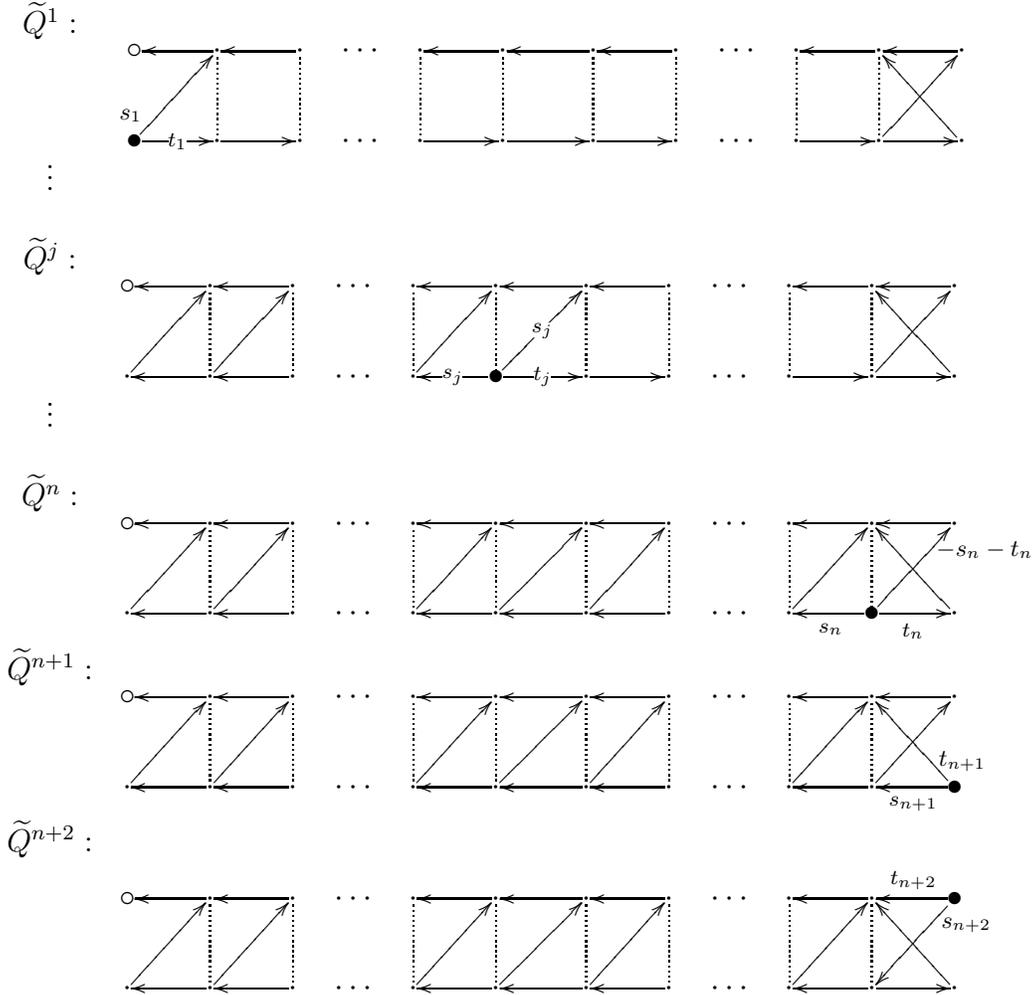

$$\begin{array}{cl}
\widetilde{Q}^1: & \\
& \xy
(-55,6)*{\circ}="1";(-44,6)*{\cdot}="2";(-33,6)*{\cdot}="3";(-17,6)*{\cdot}="4";(-6,6)*{\cdot}="5";
(6,6)*{\cdot}="6";(17,6)*{\cdot}="7";(33,6)*{\cdot}="8";(44,6)*{\cdot}="9";(55,6)*{\cdot}="10";
(-55,-6)*{\bullet}="11";(-44,-6)*{\cdot}="12";(-33,-6)*{\cdot}="13";(-17,-6)*{\cdot}="14";(-6,-6)*{\cdot}="15";
(6,-6)*{\cdot}="16";(17,-6)*{\cdot}="17";(33,-6)*{\cdot}="18";(44,-6)*{\cdot}="19";(55,-6)*{\cdot}="20";
(25,6)*{\cdots}="";(-25,6)*{\cdots}="";(25,-6)*{\cdots}="";(-25,-6)*{\cdots}="";
{\ar"2";"1"};{\ar"3";"2"};{\ar"5";"4"};{\ar"6";"5"};{\ar"7";"6"};{\ar"9";"8"};{\ar"10";"9"};
{\ar|-{t_1}"11";"12"};{\ar"12";"13"};{\ar"14";"15"};{\ar"15";"16"};{\ar"16";"17"};{\ar"18";"19"};{\ar"19";"20"};
{\ar"11";"2"};
{\ar"19";"10"};{\ar"20";"9"};
{\ar@{..}"2";"12"};{\ar@{..}"3";"13"};{\ar@{..}"4";"14"};{\ar@{..}"5";"15"};{\ar@{..}"6";"16"};{\ar@{..}"7";"17"};{\ar@{..}"8";"18"};{\ar@{..}"9";"19"};
(-55.5,-2.5)*{\text{\scriptsize{$s_1$}}}="";
\endxy
\\
\vdots & \\
\\
\widetilde{Q}^j: & \\
& \xy
(-55,6)*{\circ}="1";(-44,6)*{\cdot}="2";(-33,6)*{\cdot}="3";(-17,6)*{\cdot}="4";(-6,6)*{\cdot}="5";
(6,6)*{\cdot}="6";(17,6)*{\cdot}="7";(33,6)*{\cdot}="8";(44,6)*{\cdot}="9";(55,6)*{\cdot}="10";
(-55,-6)*{\cdot}="11";(-44,-6)*{\cdot}="12";(-33,-6)*{\cdot}="13";(-17,-6)*{\cdot}="14";(-6,-6)*{\bullet}="15";
(6,-6)*{\cdot}="16";(17,-6)*{\cdot}="17";(33,-6)*{\cdot}="18";(44,-6)*{\cdot}="19";(55,-6)*{\cdot}="20";
(25,6)*{\cdots}="";(-25,6)*{\cdots}="";(25,-6)*{\cdots}="";(-25,-6)*{\cdots}="";
{\ar"2";"1"};{\ar"3";"2"};{\ar"5";"4"};{\ar"6";"5"};{\ar"7";"6"};{\ar"9";"8"};{\ar"10";"9"};
{\ar"12";"11"};{\ar"13";"12"};{\ar|-{s_j}"15";"14"};{\ar|-{t_j}"15";"16"};{\ar"16";"17"};{\ar"18";"19"};{\ar"19";"20"};
{\ar"11";"2"};{\ar"12";"3"};{\ar"14";"5"};{\ar|-{s_j}"15";"6"};
{\ar"19";"10"};{\ar"20";"9"};
{\ar@{..}"2";"12"};{\ar@{..}"3";"13"};{\ar@{..}"4";"14"};{\ar@{..}"5";"15"};{\ar@{..}"6";"16"};{\ar@{..}"7";"17"};{\ar@{..}"8";"18"};{\ar@{..}"9";"19"};
\endxy
\\
\vdots & \\
\\
\widetilde{Q}^{n}: & \\
& \xy
(-55,6)*{\circ}="1";(-44,6)*{\cdot}="2";(-33,6)*{\cdot}="3";(-17,6)*{\cdot}="4";(-6,6)*{\cdot}="5";
(6,6)*{\cdot}="6";(17,6)*{\cdot}="7";(33,6)*{\cdot}="8";(44,6)*{\cdot}="9";(55,6)*{\cdot}="10";
(-55,-6)*{\cdot}="11";(-44,-6)*{\cdot}="12";(-33,-6)*{\cdot}="13";(-17,-6)*{\cdot}="14";(-6,-6)*{\cdot}="15";
(6,-6)*{\cdot}="16";(17,-6)*{\cdot}="17";(33,-6)*{\cdot}="18";(44,-6)*{\bullet}="19";(55,-6)*{\cdot}="20";
(25,6)*{\cdots}="";(-25,6)*{\cdots}="";(25,-6)*{\cdots}="";(-25,-6)*{\cdots}="";
{\ar"2";"1"};{\ar"3";"2"};{\ar"5";"4"};{\ar"6";"5"};{\ar"7";"6"};{\ar"9";"8"};{\ar"10";"9"};
{\ar"12";"11"};{\ar"13";"12"};{\ar"15";"14"};{\ar"16";"15"};{\ar"17";"16"};{\ar^{s_{n}}"19";"18"};{\ar_{t_{n}}"19";"20"};
{\ar"11";"2"};{\ar"12";"3"};{\ar"14";"5"};{\ar"15";"6"};{\ar"16";"7"};{\ar"18";"9"};
{\ar"19";"10"};{\ar"20";"9"};
{\ar@{..}"2";"12"};{\ar@{..}"3";"13"};{\ar@{..}"4";"14"};{\ar@{..}"5";"15"};{\ar@{..}"6";"16"};{\ar@{..}"7";"17"};{\ar@{..}"8";"18"};{\ar@{..}"9";"19"};
(59,2.5)*{\text{\scriptsize{$-s_{n}-t_{n}$}}}="";
\endxy
\\
\widetilde{Q}^{n+1}: & \\
& \xy
(-55,6)*{\circ}="1";(-44,6)*{\cdot}="2";(-33,6)*{\cdot}="3";(-17,6)*{\cdot}="4";(-6,6)*{\cdot}="5";
(6,6)*{\cdot}="6";(17,6)*{\cdot}="7";(33,6)*{\cdot}="8";(44,6)*{\cdot}="9";(55,6)*{\cdot}="10";
(-55,-6)*{\cdot}="11";(-44,-6)*{\cdot}="12";(-33,-6)*{\cdot}="13";(-17,-6)*{\cdot}="14";(-6,-6)*{\cdot}="15";
(6,-6)*{\cdot}="16";(17,-6)*{\cdot}="17";(33,-6)*{\cdot}="18";(44,-6)*{\cdot}="19";(55,-6)*{\bullet}="20";
(25,6)*{\cdots}="";(-25,6)*{\cdots}="";(25,-6)*{\cdots}="";(-25,-6)*{\cdots}="";
{\ar"2";"1"};{\ar"3";"2"};{\ar"5";"4"};{\ar"6";"5"};{\ar"7";"6"};{\ar"9";"8"};{\ar"10";"9"};
{\ar"12";"11"};{\ar"13";"12"};{\ar"15";"14"};{\ar"16";"15"};{\ar"17";"16"};{\ar"19";"18"};{\ar^{s_{n+1}}"20";"19"};
{\ar"11";"2"};{\ar"12";"3"};{\ar"14";"5"};{\ar"15";"6"};{\ar"16";"7"};{\ar"18";"9"};
{\ar"19";"10"};{\ar"20";"9"};
(56,-2.5)*{\text{\scriptsize{$t_{n+1}$}}}="";
{\ar@{..}"2";"12"};{\ar@{..}"3";"13"};{\ar@{..}"4";"14"};{\ar@{..}"5";"15"};{\ar@{..}"6";"16"};{\ar@{..}"7";"17"};{\ar@{..}"8";"18"};{\ar@{..}"9";"19"};
\endxy
\\
\widetilde{Q}^{n+2}: & \\
& \xy
(-55,6)*{\circ}="1";(-44,6)*{\cdot}="2";(-33,6)*{\cdot}="3";(-17,6)*{\cdot}="4";(-6,6)*{\cdot}="5";
(6,6)*{\cdot}="6";(17,6)*{\cdot}="7";(33,6)*{\cdot}="8";(44,6)*{\cdot}="9";(55,6)*{\bullet}="10";
(-55,-6)*{\cdot}="11";(-44,-6)*{\cdot}="12";(-33,-6)*{\cdot}="13";(-17,-6)*{\cdot}="14";(-6,-6)*{\cdot}="15";
(6,-6)*{\cdot}="16";(17,-6)*{\cdot}="17";(33,-6)*{\cdot}="18";(44,-6)*{\cdot}="19";(55,-6)*{\cdot}="20";
(25,6)*{\cdots}="";(-25,6)*{\cdots}="";(25,-6)*{\cdots}="";(-25,-6)*{\cdots}="";
{\ar"2";"1"};{\ar"3";"2"};{\ar"5";"4"};{\ar"6";"5"};{\ar"7";"6"};{\ar"9";"8"};{\ar_{t_{n+2}}"10";"9"};
{\ar"12";"11"};{\ar"13";"12"};{\ar"15";"14"};{\ar"16";"15"};{\ar"17";"16"};{\ar"19";"18"};{\ar"19";"20"};
{\ar"11";"2"};{\ar"12";"3"};{\ar"14";"5"};{\ar"15";"6"};{\ar"16";"7"};{\ar"18";"9"};
{\ar"10";"19"};{\ar"20";"9"};
(56.5,2.5)*{\text{\scriptsize{$s_{n+2}$}}}="";
{\ar@{..}"2";"12"};{\ar@{..}"3";"13"};{\ar@{..}"4";"14"};{\ar@{..}"5";"15"};{\ar@{..}"6";"16"};{\ar@{..}"7";"17"};{\ar@{..}"8";"18"};{\ar@{..}"9";"19"};
\endxy
\\
\end{array}$$
\caption{The $n+2$ pulled-apart supporting subquivers of the almost large modules over the $D_{n+2}$ preprojective algebra, up to isomorphism.  Vertices connected by a dotted edge correspond to the same vertex in $Q_0$.  Vertex $k$ is denoted $\circ$, and each $\mathbb{P}^1$ shrinks to the vertex simple at the vertex denoted $\bullet$.}
\label{D_n pulled-apart sub}
\end{figure}
\begin{figure}
$$\begin{array}{cl}
\widetilde{Q}^1: & \\
& \xy
(0,-21.5)*+{\circ}="0";
(-7,-6.5)*+{\bullet}="1";(7,-6.5)*+{\cdot}="1'";
(0,13.5)*+{\cdot}="3";
(-14,13.5)*+{\cdot}="2";(14,13.5)*+{\cdot}="2'";
(-28,6.5)*+{\cdot}="4";(28,6.5)*+{\cdot}="4'";(-28,20.5)*+{\cdot}="5";(28,20.5)*+{\cdot}="5'";
(-42,13.5)*+{\cdot}="6";(42,13.5)*+{\cdot}="6'";
{\ar@{..}"1";"1'"};
{\ar@{..}"2";"3"};
{\ar@{..}"3";"2'"};
{\ar@{..}"5";"4"};
{\ar@{..}"5'";"4'"};
{\ar|-{}"1'";"0"};
{\ar|-{s_1}"1";"2"};
{\ar|-{t_1}"1";"3"};
{\ar|-{}"4";"6"};
{\ar|-{}"6";"5"};
{\ar|-{}"4'";"6'"};
{\ar|-{}"6'";"5'"};
{\ar|-{}"5'";"2'"};
{\ar|-{}"2";"4"};
{\ar@/_/|-{}"4";"3"};
{\ar@/^/|-{}"4'";"3"};
{\ar@/_/|-{}"3";"5"};
{\ar@/^/|-{}"3";"5'"};
{\ar@/^1.3pc/|-{}"5";"2'"};
{\ar@/_1.3pc/|-{}"2";"4'"};
{\ar|-{}"2'";"1'"};
(-3.5,0)*{}="";
\endxy
\\
\widetilde{Q}^2: & \\
& \xy
(0,-21.5)*+{\circ}="0";
(-7,-6.5)*+{\cdot}="1";(7,-6.5)*+{\cdot}="1'";
(0,13.5)*+{\cdot}="3";
(-14,13.5)*+{\bullet}="2";(14,13.5)*+{\cdot}="2'";
(-28,6.5)*+{\cdot}="4";(28,6.5)*+{\cdot}="4'";(-28,20.5)*+{\cdot}="5";(28,20.5)*+{\cdot}="5'";
(-42,13.5)*+{\cdot}="6";(42,13.5)*+{\cdot}="6'";
{\ar@{..}"1";"1'"};
{\ar@{..}"2";"3"};
{\ar@{..}"3";"2'"};
{\ar@{..}"5";"4"};
{\ar@{..}"5'";"4'"};
{\ar|-{}"1'";"0"};
{\ar|-{-s_2-t_2}"2";"1"};
{\ar"1";"3"};
{\ar|-{}"4";"6"};
{\ar|-{}"6";"5"};
{\ar|-{}"4'";"6'"};
{\ar|-{}"6'";"5'"};
{\ar|-{}"5'";"2'"};
{\ar|-{s_2}"2";"4"};
{\ar@/_/|-{}"4";"3"};
{\ar@/^/|-{}"4'";"3"};
{\ar@/_/|-{}"3";"5"};
{\ar@/^/|-{}"3";"5'"};
{\ar@/^1.3pc/|-{}"5";"2'"};
{\ar@/_1.3pc/|-{ \ t_2}"2";"4'"};
{\ar|-{}"2'";"1'"};
(-3.5,0)*{}="";
\endxy
\\
\widetilde{Q}^3: & \\
& \xy
(0,-21.5)*+{\circ}="0";
(-7,-6.5)*+{\cdot}="1";(7,-6.5)*+{\cdot}="1'";
(0,13.5)*+{\cdot}="3";
(-14,13.5)*+{\cdot}="2";(14,13.5)*+{\cdot}="2'";
(-28,6.5)*+{\bullet}="4";(28,6.5)*+{\cdot}="4'";(-28,20.5)*+{\cdot}="5";(28,20.5)*+{\cdot}="5'";
(-42,13.5)*+{\cdot}="6";(42,13.5)*+{\cdot}="6'";
{\ar@{..}"1";"1'"};
{\ar@{..}"2";"3"};
{\ar@{..}"3";"2'"};
{\ar@{..}"5";"4"};
{\ar@{..}"5'";"4'"};
{\ar|-{}"1'";"0"};
{\ar|-{}"2";"1"};
{\ar"1";"3"};
{\ar|-{t_3}"4";"6"};
{\ar|-{}"6";"5"};
{\ar|-{}"4'";"6'"};
{\ar|-{}"6'";"5'"};
{\ar|-{}"5'";"2'"};
{\ar|-{s_3}"4";"2"};
{\ar@/_/|-{t_3}"4";"3"};
{\ar@/^/|-{}"4'";"3"};
{\ar@/_/|-{}"3";"5"};
{\ar@/^/|-{}"3";"5'"};
{\ar@/^1.3pc/|-{}"5";"2'"};
{\ar@/_1.3pc/|-{}"2";"4'"};
{\ar|-{}"2'";"1'"};
(-3.5,0)*{}="";
\endxy
\end{array}$$
\end{figure}

\begin{figure}
$$\begin{array}{cl}
\widetilde{Q}^4: & \\
& \xy
(0,-21.5)*+{\circ}="0";
(-7,-6.5)*+{\cdot}="1";(7,-6.5)*+{\cdot}="1'";
(0,13.5)*+{\cdot}="3";
(-14,13.5)*+{\cdot}="2";(14,13.5)*+{\cdot}="2'";
(-28,6.5)*+{\cdot}="4";(28,6.5)*+{\bullet}="4'";(-28,20.5)*+{\cdot}="5";(28,20.5)*+{\cdot}="5'";
(-42,13.5)*+{\cdot}="6";(42,13.5)*+{\cdot}="6'";
{\ar@{..}"1";"1'"};
{\ar@{..}"2";"3"};
{\ar@{..}"3";"2'"};
{\ar@{..}"5";"4"};
{\ar@{..}"5'";"4'"};
{\ar|-{}"1'";"0"};
{\ar|-{}"2";"1"};
{\ar"1";"3"};
{\ar|-{}"4";"6"};
{\ar|-{}"6";"5"};
{\ar|-{s_4}"4'";"6'"};
{\ar|-{}"6'";"5'"};
{\ar|-{}"5'";"2'"};
{\ar|-{}"2";"4"};
{\ar@/_/|-{}"4";"3"};
{\ar@/^/|-{s_4}"4'";"3"};
{\ar@/_/|-{}"3";"5"};
{\ar@/^/|-{}"3";"5'"};
{\ar@/^1.3pc/|-{}"5";"2'"};
{\ar@/^1.3pc/|-{ \ t_4}"4'";"2"};
{\ar|-{}"2'";"1'"};
(-3.5,0)*{}="";
\endxy
\\
\widetilde{Q}^{5}: & \\
& \xy
(0,-21.5)*+{\circ}="0";
(-7,-6.5)*+{\cdot}="1";(7,-6.5)*+{\cdot}="1'";
(0,13.5)*+{\cdot}="3";
(-14,13.5)*+{\cdot}="2";(14,13.5)*+{\cdot}="2'";
(-28,6.5)*+{\cdot}="4";(28,6.5)*+{\cdot}="4'";(-28,20.5)*+{\cdot}="5";(28,20.5)*+{\cdot}="5'";
(-42,13.5)*+{\bullet}="6";(42,13.5)*+{\cdot}="6'";
{\ar@{..}"1";"1'"};
{\ar@{..}"2";"3"};
{\ar@{..}"3";"2'"};
{\ar@{..}"5";"4"};
{\ar@{..}"5'";"4'"};
{\ar@/^/"3";"5'"};
{\ar"1'";"0"};
{\ar"2'";"1'"};{\ar"2";"1"};{\ar"1";"3"};
{\ar|-{s_5}"6";"5"};{\ar|-{t_5}"6";"4"};
{\ar@/^1.3pc/"5";"2'"};{\ar@/_/"3";"5"};{\ar"4";"2"};
{\ar"4'";"6'"};{\ar"6'";"5'"};
{\ar"5'";"2'"};{\ar@/^/"4'";"3"};{\ar@/_1.3pc/"2";"4'"};
\endxy
\\
\widetilde{Q}^{6}: & \\
& \xy
(0,-21.5)*+{\circ}="0";
(7,-6.5)*+{\cdot}="1";(-7,-6.5)*+{\cdot}="1'";
(0,13.5)*+{\cdot}="3";
(14,13.5)*+{\cdot}="2";(-14,13.5)*+{\cdot}="2'";
(28,6.5)*+{\cdot}="5";(-28,6.5)*+{\cdot}="5'";(28,20.5)*+{\cdot}="4";(-28,20.5)*+{\cdot}="4'";
(42,13.5)*+{\bullet}="6";(-42,13.5)*+{\cdot}="6'";
{\ar@{..}"1";"1'"};
{\ar@{..}"2";"3"};
{\ar@{..}"3";"2'"};
{\ar@{..}"5";"4"};
{\ar@{..}"5'";"4'"};
{\ar@/^/|-{}"3";"5'"};
{\ar"1'";"0"};
{\ar"2'";"1'"};{\ar"2";"1"};{\ar"1";"3"};
{\ar|-{t_6}"6";"5"};{\ar|-{s_6}"6";"4"};
{\ar@/^1.3pc/"5";"2'"};{\ar@/_/"3";"5"};{\ar"4";"2"};
{\ar"4'";"6'"};{\ar"6'";"5'"};
{\ar"5'";"2'"};{\ar@/^/"4'";"3"};{\ar@/_1.3pc/"2";"4'"};
\endxy
\end{array}$$
\caption{The $6$ pulled-apart supporting subquivers of the almost large modules over the $E_6$ preprojective algebra, up to isomorphism.  Vertices connected by a dotted edge correspond to the same vertex in $Q_0$.  Vertex $k$ is denoted $\circ$, and each $\mathbb{P}^1$ shrinks to the vertex simple at the vertex denoted $\bullet$.} 
\label{E_6 pulled-apart sub}
\end{figure}
\end{proof}

\subsection{A non-isolated quotient singularity} \label{A non-isolated quotient singularity}

Consider the linear action of the finite abelian group $G = \mu_r^{\oplus 2} = \left\langle g_1, g_2 \right\rangle$ on $\mathbb{C}[x,y,z]$ by the representation 
$$\rho(G) = \left\langle \rho(g_1) = \operatorname{diag}\left(\omega, \omega^{-1},1 \right), \rho(g_2) = \operatorname{diag}\left(1, \omega^{-1}, \omega \right)
 \right\rangle \subset \operatorname{SU}_3(\mathbb{C}),$$
where $\omega$ is a primitive $r$th root of unity.  The ring of invariants $R:= \mathbb{C}[x,y,z]^{\rho(G)}$ is the coordinate ring for the non-isolated quotient singularity $\mathbb{C}^3/\rho(G) := \operatorname{Max}R$, which is a 3 dimensional version of the $A_n$ singularity (see \cite[Example 2.2]{Reid2}).  Here we take $r = 4$.

We will find that the resolution of $\mathbb{C}^3/\rho(G)$ determined by the basic triangulation of its toric diagram, given in figure \ref{toric diagram}, parameterizes the large modules and almost large modules with isomorphic 1-dimensional socles over the McKay quiver algebra of $(G,\rho)$.\footnote{Note that the 3 regular hexagons in the triangulation correspond to 3 del Pezzo surfaces of degree 6, that is, 3 $\mathbb{P}^2$'s blown up at 3 points not on a line.}  The McKay quiver $Q$ of $(G, \rho)$ is determined by noting that there are $r^2$ irreducible representations $\rho_{ij}$ of $G$, all of which are 1-dimensional,
$$\rho_{ij}(g_1) = \omega^i, \ \ \ \ \rho_{ij}(g_2) = \omega^j.$$
$Q$ may be drawn on a two-torus as shown in figure \ref{McKay for G}.  Denote by $a_i,b_i,c_i \in Q_1e_i$ the respective arrows that head up, right, and downward to the left, and set $a:= \sum_{i \in Q_0}a_i$, $b:= \sum_{i \in Q_0}b_i$, and $c:=\sum_{i \in Q_0}c_i$.  The McKay quiver algebra of $(G, \rho)$ is then
$$A = \mathbb{C}Q/\left\langle ab-ba, bc-cb, ca-ac \right\rangle.$$  
By \cite[Theorem 3.7, with $(x,y,z) = (x_1,y_1,x_2y_2)$]{B}, the large $A$-modules have dimension vector $(1,\ldots,1)$, and an impression $(\tau, \mathbb{C}[x,y,z])$ of $A$ is given by the labeling of arrows
$$\bar{\tau}(a_i) = x, \ \ \ \bar{\tau}(b_i) = y, \ \ \ \bar{\tau}(c_i) = z.$$

The following proposition extends the fact that the large $A$-modules are parameterized by the smooth locus of $\mathbb{C}^3/\rho(G)$.

\begin{figure}
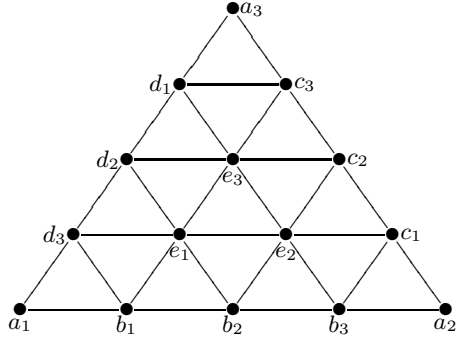

$$\xy
(-28.284,-20)*{\bullet}="1";(-14.142,-20)*{\bullet}="2";(0,-20)*{\bullet}="3";(14.142,-20)*{\bullet}="4";(28.284,-20)*{\bullet}="5";
(-21.213,-10)*{\bullet}="6";(-7.071,-10)*{\bullet}="7";(7.071,-10)*{\bullet}="8";(21.213,-10)*{\bullet}="9";
(-14.142,0)*{\bullet}="10";(0,0)*{\bullet}="11";(14.142,0)*{\bullet}="12";
(-7.071,10)*{\bullet}="13";(7.071,10)*{\bullet}="14";
(0,20)*{\bullet}="15";
{\ar@{-}"1";"2"};{\ar@{-}"2";"3"};{\ar@{-}"3";"4"};{\ar@{-}"4";"5"};
{\ar@{-}"6";"7"};{\ar@{-}"7";"8"};{\ar@{-}"8";"9"};
{\ar@{-}"10";"11"};{\ar@{-}"11";"12"};
{\ar@{-}"13";"14"};
{\ar@{-}"1";"6"};{\ar@{-}"2";"6"};{\ar@{-}"2";"7"};{\ar@{-}"3";"7"};{\ar@{-}"3";"8"};{\ar@{-}"4";"8"};{\ar@{-}"4";"9"};{\ar@{-}"8";"9"};
{\ar@{-}"6";"10"};{\ar@{-}"7";"10"};{\ar@{-}"7";"11"};{\ar@{-}"8";"11"};{\ar@{-}"8";"12"};{\ar@{-}"9";"12"};
{\ar@{-}"10";"13"};{\ar@{-}"11";"13"};{\ar@{-}"11";"14"};{\ar@{-}"12";"14"};
{\ar@{-}"13";"15"};{\ar@{-}"14";"15"};{\ar@{-}"5";"9"};
(-14.142,-22)*{\text{\scriptsize{$b_1$}}}="";(0,-22)*{\text{\scriptsize{$b_2$}}}="";(14.142,-22)*{\text{\scriptsize{$b_3$}}}="";
(23.713,-10)*{\text{\scriptsize{$c_1$}}}="";(16.642,0)*{\text{\scriptsize{$c_2$}}}="";(9.571,10)*{\text{\scriptsize{$c_3$}}}="";
(-23.713,-10)*{\text{\scriptsize{$d_3$}}}="";(-16.642,0)*{\text{\scriptsize{$d_2$}}}="";(-9.571,10)*{\text{\scriptsize{$d_1$}}}="";
(-7.071,-12.5)*{\text{\scriptsize{$e_1$}}}="";(7.071,-12.5)*{\text{\scriptsize{$e_2$}}}="";(0,-2.5)*{\text{\scriptsize{$e_3$}}}="";
(-28.284,-22)*{\text{\scriptsize{$a_1$}}}="";(28.284,-22)*{\text{\scriptsize{$a_2$}}}="";(2.5,20)*{\text{\scriptsize{$a_3$}}}="";
\endxy$$
\caption{The basic triangulation of the toric diagram for the resolution of $\mathbb{C}^3/\rho(G)$ that parameterizes the large modules and almost large modules with isomorphic 1-dimensional socles.}
\label{toric diagram}
\end{figure}

\begin{Proposition} \label{3d A} 
Let $A=\mathbb{C}Q/I$ be the McKay quiver algebra for $\left(\mu_4^{\oplus 2}, \rho \right)$, and let $\pi: Y \rightarrow \mathbb{C}^3/\rho(\mu_4^{\oplus 2})$ be the resolution determined by the basic triangulation of the toric diagram in figure \ref{toric diagram}.  Then the exceptional locus $E$ parameterizes the almost large $A$-modules with socle isomorphic to any fixed vertex simple.
\end{Proposition}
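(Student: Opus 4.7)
The proof plan is to adapt the strategy of Theorem~\ref{cyclic theorem} and Theorem~\ref{D_n theorem} to the present three-dimensional toric situation. By \cite[Theorem 3.7]{B} the large $A$-modules have dimension vector $(1,\ldots,1)$ and $A$ admits the impression $(\tau,\mathbb{C}[x,y,z])$ described above the statement. The McKay quiver $Q$ is invariant under translation by any vertex on the torus, and this symmetry is inherited by the algebra $A$; it therefore suffices to fix $k=0$ and study almost large modules $V$ with $\operatorname{Soc}V \cong S_0$ with respect to the path-like set $\mathcal{P}=Q_{\geq 0}\cup\{0\}$.

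For each compact torus-invariant divisor $E_v$ of the resolution $Y$ -- and these are in bijection with the interior lattice vertices of the junior simplex in figure~\ref{toric diagram}, hence with $Q_0\setminus\{0\}$ -- I would construct an explicit pulled-apart subquiver $\widetilde{Q}^v$ of $Q$ by keeping only those arrows of $Q$ whose representations remain generically nonzero along $E_v$. The commutation relations $ab=ba$, $bc=cb$, $ca=ac$ force which $a$-, $b$-, or $c$-arrow must be killed at each vertex once the generic vanishing pattern is specified by $v$. The two toric coordinates on $E_v$ extracted from the impression via formula (\ref{O}) are then read off from independent ratios of arrow labels in $\widetilde{Q}^v$, and one checks that they agree with the standard coordinates on $E_v$ coming from the triangulation.

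The body of the proof then mirrors that of Theorem~\ref{cyclic theorem}. First, each $\widetilde{Q}^v$ has $0$ as a sink and meets the hypotheses of Lemma~\ref{socle S_k}, so it supports a two-parameter family of $A$-modules with socle $S_0$; this is the $\ell_{\mathcal{P}}=1$ stratum of almost large modules corresponding to $E_v$. Second, an intersection $E_v\cap E_{v'}$ (an edge of the triangulation) corresponds to an $\ell_{\mathcal{P}}=2$ stratum obtained by killing one further arrow from $\widetilde{Q}^v$, and the resulting subquiver coincides with that obtained symmetrically by deleting an arrow from $\widetilde{Q}^{v'}$, in analogy with (xii) of Theorem~\ref{cyclic theorem}. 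Third, a triple intersection $E_v\cap E_{v'}\cap E_{v''}$ (a triangle in figure~\ref{toric diagram}) gives an $\ell_{\mathcal{P}}=3$ single-isoclass stratum parameterizing a torus-fixed point of $Y$; since $\dim Z = 3$, these maximal chains are of the correct length.

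The main obstacle will be showing that the list of supporting subquivers $\widetilde{Q}^v$ is exhaustive, i.e., that every $V\in\operatorname{Rep}_{(1,\ldots,1)}A$ with $\operatorname{Soc}V\cong S_0$ is supported on some $\widetilde{Q}^v$ or on a further degeneration thereof. In the cyclic quotient surface case the analogous combinatorial argument (parts (iv), (x), (xiii) of Theorem~\ref{cyclic theorem}) exploited the essentially one-dimensional McKay quiver, which carries only two arrow types and one commutation relation per vertex; here the McKay quiver sits on a two-torus with three arrow types and three relations per vertex, so a direct combinatorial argument becomes delicate. The cleanest route is the moduli-theoretic shortcut used in Claim~II of Theorem~\ref{D_n theorem}: choose the stability parameter $\theta=\bigl(-1+\sum_i d_i,-1,\ldots,-1\bigr)\in\mathbb{Z}^{|Q_0|}$ with the positive component at vertex $0$, so that $\theta$-stability is equivalent to having socle $S_0$. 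Then $\mathcal{M}_d^{\theta}(A)$ coincides with the toric resolution $Y$ determined by the basic triangulation, by the standard toric/GIT construction of crepant resolutions of abelian quotient singularities. This identification forces every almost large module with socle $S_0$ to arise from exactly one of the $\widetilde{Q}^v$ strata and simultaneously confirms that $\mathcal{P}=Q_{\geq 0}\cup\{0\}$ detects all such modules, completing the verification of Conjecture~\ref{conjecture} in this case.
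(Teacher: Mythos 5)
Your skeleton (translation symmetry to fix $k=0$, one supporting subquiver per relevant lattice point, and strata of codimension $1,2,3$ matching vertices, edges and triangles of the triangulation) is the right one, but your indexing of the strata contains a genuine error. The compact torus-invariant divisors of $Y$ correspond only to the \emph{three} interior lattice points $e_1,e_2,e_3$ of figure \ref{toric diagram}, whereas $Q_0\setminus\{0\}$ has fifteen elements, so the bijection you assert is internally inconsistent. More importantly, because this singularity is non-isolated, the exceptional locus $E$ contains non-compact components lying over the three singular curves; these correspond to the nine non-corner boundary lattice points $b_j,c_j,d_j$, while the three corners $a_j$ index loci of \emph{large} (simple) modules and contribute nothing to $E$. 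The paper accordingly constructs fifteen supporting subquivers (figures \ref{G1}--\ref{G3}): $\mathbb{P}^2$-families of $\ell_{\mathcal{P}}=1$ almost large modules for the $e_j$, $\mathbb{C}^*$-families of $\mathbb{P}^1$-families for the $b_j,c_j,d_j$, and $\mathbb{C}^*\times\mathbb{C}^*$-families of large modules for the $a_j$. Restricting to interior points, as your plan literally states, would parameterize only a small compact piece of $E$.

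The second issue is your exhaustiveness shortcut. In the $D_{n+2}$/$E_6$ case the minimal resolution is the unique crepant resolution, so identifying $\mathcal{M}_d^\theta(A)$ with $Y$ via Kronheimer is unambiguous. For $\mathbb{C}^3/\rho(\mu_4^{\oplus 2})$ the crepant resolutions correspond to the many distinct basic triangulations of the toric diagram, and they are separated by walls in $\theta$-space; your chosen $\theta$ yields \emph{some} crepant resolution, but you must additionally verify that its chamber corresponds to the particular triangulation of figure \ref{toric diagram}, which your proposal does not address. The paper instead argues directly (if tersely): if any arrow annihilates $V\in\operatorname{Rep}_{(1,\ldots,1)}A$, then since $\sum_{i}\gamma_i$ (the sum of the length-3 cycles) is central, every $\gamma_i$ annihilates $V$, so at least one arrow of every 3-cycle must be killed; this pins down the possible supporting subquivers, and the $\ell_{\mathcal{P}}=2$ and $\ell_{\mathcal{P}}=3$ strata are matched to the edges and triangles of the triangulation by an explicit table rather than by a moduli-space identification.
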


\begin{proof}
Recall that the large $A$-modules have dimension vector $(1,\ldots, 1)$.  Denote by $\mathcal{P}$ the path-like set $Q_{\geq 0} \cup \{0\}$.  Since $\operatorname{dim}Z = 3$, we must consider $\ell_{\mathcal{P}} =1,2,3$ almost large modules.  Fix a vertex $0 \in Q_0$, denoted $\circ$ in figures \ref{G1} - \ref{G3}; here each subquiver is drawn on a two-torus.

\textit{$\ell_{\mathcal{P}} = 1$ almost large modules.}  The supporting subquivers for the $\ell_{\mathcal{P}} =1$ large modules are displayed in figure \ref{G1}, while the supporting subquivers for $\ell_{\mathcal{P}} = 1$ almost large modules with socle $S_0$ are displayed in figures \ref{G2} and \ref{G3}, where by a ``$\mathbb{P}^n$-family'' we really mean a family parameterized by $\mathbb{P}^n$ minus the $n+1$ points of where one of the coordinates is zero.  These subquivers are determined as follows: Let $V \in \operatorname{Rep}_{(1,\ldots,1)}A$ be an $\ell_{\mathcal{P}} = 1$ almost large module.  Then there is some arrow $a$ that annihilates $V$ since the dimension vector of $V$ is $(1,\ldots,1)$.  For each $i \in Q_0$, denote by $\gamma_i \in e_iAe_i$ the unique cycle (modulo $I$) at vertex $i$ of length 3.  Since $a$ annihilates $V$, the cycle $\gamma_{j}$ containing $a$ as a subpath also annihilates $V$, and since $\sum_{i \in Q_0}\gamma_i$ is in the center of $A$ by \cite[Theorem 2.7]{B}, each cycle $\gamma_i$ must annihilate $V$.  But again since the dimension vector of $V$ is $(1, \ldots,1)$, at least one arrow in each cycle $\gamma_i$ must annihilate $V$.  Thus the supporting subquivers for the $\ell_{\mathcal{P}} = 1$ modules have at least one arrow removed from each cycle of length 3.  If a cycle of length 3 has two arrows removed, then we will find below that such a subquiver supports an $\ell_{\mathcal{P}} = 2$ almost large module.

\textit{$\ell_{\mathcal{P}} = 2$ almost large modules.}  The supporting subquivers for all $\ell_{\mathcal{P}} = 2$ almost large modules with socle $S_0$ are also displayed in figures \ref{G1} - \ref{G3}, and they are obtained as follows.  Suppose two vertices in the toric diagram (figure \ref{toric diagram}), say $g$ and $h$, are connected by an edge.  Then the irreducible components of the exceptional locus corresponding to $g$ and $h$ have nonempty intersection, and an open subset of this intersection parameterizes the $\ell_{\mathcal{P}} = 2$ almost large module isoclasses with supporting subquivers having vertex set $Q_0$ and arrow set
$$g \cap h: Q^g_1 \setminus \left\{ \text{ arrows labeled by } i \right\} = Q^h_1 \setminus \left\{ \text{ arrows labeled by } j \right\},$$
where $Q^g$, $Q^h$, and the labels $i$ and $j$ are displayed in figures \ref{G1} - \ref{G3}.  The following table verifies this explicitly.
$$\begin{array}{|cccc|cccc|cccc|cccc|cccc|}
g & h & i & j & g & h & i & j & g & h & i & j & g & h & i & j & g & h & i & j \\
\hline
a_1 & d_3 & 2 & 2 &
b_2 & e_2 & 4 & 6 &
c_1 & e_2 & 4 & 2 &
c_3 & d_1 & 4 & 4 &
d_2 & e_1 & 3 & 2 \\
a_1 & b_1 & 1 & 1 &
b_2 & b_3 & 2 & 1 &
c_1 & c_2 & 3 & 1 &
c_3 & a_3 & 3 & 1 &
d_2 & d_3 & 2 & 1 \\
b_1 & d_3 & 3 & 3 &
b_3 & e_2 & 3 & 3 &
c_2 & e_2 & 2 & 5 &
a_3 & d_1 & 2 & 1 &
d_3 & e_1 & 4 & 4 \\
b_1 & e_1 & 4 & 3 &
b_3 & c_1 & 4 & 2 &
c_2 & e_3 & 4 & 5 &
d_1 & e_3 & 3 & 3 &
e_1 & e_2 & 5 & 1 \\
b_1 & b_2 & 2 & 1 &
b_3 & a_2 & 2 & 1 &
c_2 & c_3 & 3 & 1 &
d_1 & d_2 & 2 & 1 &
e_2 & e_3 & 4 & 4 \\
b_2 & e_1 & 3 & 1 &
a_2 & c_1 & 2 & 1 &
c_3 & e_3 & 2 & 2 &
d_2 & e_3 & 4 & 6 &
e_3 & e_1 & 1 & 6
\end{array}$$
One may check that all other $\ell_{\mathcal{P}} = 2$ almost large modules do not have socle $S_0$.

\textit{$\ell_{\mathcal{P}} = 3$ almost large modules.}  There are 8 $\ell_{\mathcal{P}} = 3$ almost large module isoclasses, and these correspond to the faces of the basic triangles in the toric diagram.  The supporting subquiver for such a module is obtained by intersecting the three subquivers corresponding to the vertices of the corresponding basic triangle, and all other $\ell_{\mathcal{P}} =3$ almost large modules do not have socle $S_0$.  We leave the verification to the reader.

It is clear that the almost large modules with socle $S_0$ obtained from the path-like set $\mathcal{P} = Q_{\geq 0} \cup \{0\}$ exhaust the set of all modules in $\operatorname{Rep}_{(1,\cdots,1)}A$ with socle $S_0$, and so no other path-like set need be considered.
\end{proof}

\begin{figure}
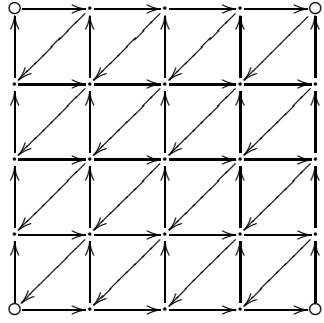

$$\xy
(-20,-20)*{\circ}="1";(-10,-20)*{\cdot}="2";(0,-20)*{\cdot}="3";(10,-20)*{\cdot}="4";(20,-20)*{\circ}="5";
(-20,-10)*{\cdot}="6";(-10,-10)*{\cdot}="7";(0,-10)*{\cdot}="8";(10,-10)*{\cdot}="9";(20,-10)*{\cdot}="10";
(-20,0)*{\cdot}="11";(-10,0)*{\cdot}="12";(0,0)*{\cdot}="13";(10,0)*{\cdot}="14";(20,0)*{\cdot}="15";
(-20,10)*{\cdot}="16";(-10,10)*{\cdot}="17";(0,10)*{\cdot}="18";(10,10)*{\cdot}="19";(20,10)*{\cdot}="20";
(-20,20)*{\circ}="21";(-10,20)*{\cdot}="22";(0,20)*{\cdot}="23";(10,20)*{\cdot}="24";(20,20)*{\circ}="25";
{\ar"1";"2"};{\ar"2";"3"};{\ar"3";"4"};{\ar"4";"5"};
{\ar"6";"7"};{\ar"7";"8"};{\ar"8";"9"};{\ar"9";"10"};
{\ar"11";"12"};{\ar"12";"13"};{\ar"13";"14"};{\ar"14";"15"};
{\ar"16";"17"};{\ar"17";"18"};{\ar"18";"19"};{\ar"19";"20"};
{\ar"21";"22"};{\ar"22";"23"};{\ar"23";"24"};{\ar"24";"25"};
{\ar"1";"6"};{\ar"6";"11"};{\ar"11";"16"};{\ar"16";"21"};
{\ar"2";"7"};{\ar"7";"12"};{\ar"12";"17"};{\ar"17";"22"};
{\ar"3";"8"};{\ar"8";"13"};{\ar"13";"18"};{\ar"18";"23"};
{\ar"4";"9"};{\ar"9";"14"};{\ar"14";"19"};{\ar"19";"24"};
{\ar"5";"10"};{\ar"10";"15"};{\ar"15";"20"};{\ar"20";"25"};
{\ar"22";"16"};{\ar"23";"17"};{\ar"17";"11"};
{\ar"24";"18"};{\ar"18";"12"};{\ar"12";"6"};
{\ar"25";"19"};{\ar"19";"13"};{\ar"13";"7"};{\ar"7";"1"};
{\ar"20";"14"};{\ar"14";"8"};{\ar"8";"2"};
{\ar"15";"9"};{\ar"9";"3"};
{\ar"10";"4"};
\endxy$$
\caption{The McKay quiver for $(G, \rho)$, drawn on a two-torus.  The vertices denoted $\circ$ are identified.}
\label{McKay for G}
\end{figure}

\begin{figure}
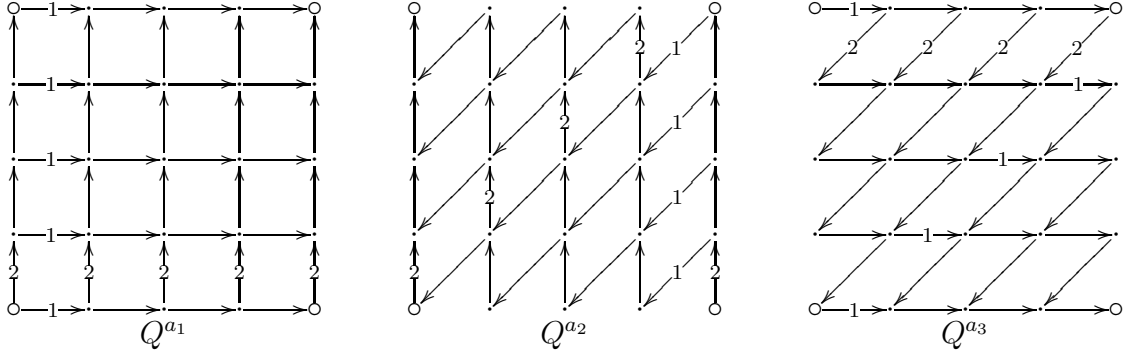

$$\begin{array}{ccccc}
\xy
(-20,-20)*{\circ}="1";(-10,-20)*{\cdot}="2";(0,-20)*{\cdot}="3";(10,-20)*{\cdot}="4";(20,-20)*{\circ}="5";
(-20,-10)*{\cdot}="6";(-10,-10)*{\cdot}="7";(0,-10)*{\cdot}="8";(10,-10)*{\cdot}="9";(20,-10)*{\cdot}="10";
(-20,0)*{\cdot}="11";(-10,0)*{\cdot}="12";(0,0)*{\cdot}="13";(10,0)*{\cdot}="14";(20,0)*{\cdot}="15";
(-20,10)*{\cdot}="16";(-10,10)*{\cdot}="17";(0,10)*{\cdot}="18";(10,10)*{\cdot}="19";(20,10)*{\cdot}="20";
(-20,20)*{\circ}="21";(-10,20)*{\cdot}="22";(0,20)*{\cdot}="23";(10,20)*{\cdot}="24";(20,20)*{\circ}="25";
{\ar|-{1}"1";"2"};{\ar"2";"3"};{\ar"3";"4"};{\ar"4";"5"};
{\ar|-{1}"6";"7"};{\ar"7";"8"};{\ar"8";"9"};{\ar"9";"10"};
{\ar|-{1}"11";"12"};{\ar"12";"13"};{\ar"13";"14"};{\ar"14";"15"};
{\ar|-{1}"16";"17"};{\ar"17";"18"};{\ar"18";"19"};{\ar"19";"20"};
{\ar|-{1}"21";"22"};{\ar"22";"23"};{\ar"23";"24"};{\ar"24";"25"};
{\ar|-{2}"1";"6"};{\ar"6";"11"};{\ar"11";"16"};{\ar"16";"21"};
{\ar|-{2}"2";"7"};{\ar"7";"12"};{\ar"12";"17"};{\ar"17";"22"};
{\ar|-{2}"3";"8"};{\ar"8";"13"};{\ar"13";"18"};{\ar"18";"23"};
{\ar|-{2}"4";"9"};{\ar"9";"14"};{\ar"14";"19"};{\ar"19";"24"};
{\ar|-{2}"5";"10"};{\ar"10";"15"};{\ar"15";"20"};{\ar"20";"25"};
\endxy
& \ \ \ 
&
\xy
(-20,-20)*{\circ}="1";(-10,-20)*{\cdot}="2";(0,-20)*{\cdot}="3";(10,-20)*{\cdot}="4";(20,-20)*{\circ}="5";
(-20,-10)*{\cdot}="6";(-10,-10)*{\cdot}="7";(0,-10)*{\cdot}="8";(10,-10)*{\cdot}="9";(20,-10)*{\cdot}="10";
(-20,0)*{\cdot}="11";(-10,0)*{\cdot}="12";(0,0)*{\cdot}="13";(10,0)*{\cdot}="14";(20,0)*{\cdot}="15";
(-20,10)*{\cdot}="16";(-10,10)*{\cdot}="17";(0,10)*{\cdot}="18";(10,10)*{\cdot}="19";(20,10)*{\cdot}="20";
(-20,20)*{\circ}="21";(-10,20)*{\cdot}="22";(0,20)*{\cdot}="23";(10,20)*{\cdot}="24";(20,20)*{\circ}="25";
{\ar|-{2}"1";"6"};{\ar"6";"11"};{\ar"11";"16"};{\ar"16";"21"};
{\ar"2";"7"};{\ar|-{2}"7";"12"};{\ar"12";"17"};{\ar"17";"22"};
{\ar"3";"8"};{\ar"8";"13"};{\ar|-{2}"13";"18"};{\ar"18";"23"};
{\ar"4";"9"};{\ar"9";"14"};{\ar"14";"19"};{\ar|-{2}"19";"24"};
{\ar|-{2}"5";"10"};{\ar"10";"15"};{\ar"15";"20"};{\ar"20";"25"};
{\ar"22";"16"};{\ar"23";"17"};{\ar"17";"11"};
{\ar"24";"18"};{\ar"18";"12"};{\ar"12";"6"};
{\ar|-{1}"25";"19"};{\ar"19";"13"};{\ar"13";"7"};{\ar"7";"1"};
{\ar|-{1}"20";"14"};{\ar"14";"8"};{\ar"8";"2"};
{\ar|-{1}"15";"9"};{\ar"9";"3"};
{\ar|-{1}"10";"4"};
\endxy
&
\ \ \ 
&
\xy
(-20,-20)*{\circ}="1";(-10,-20)*{\cdot}="2";(0,-20)*{\cdot}="3";(10,-20)*{\cdot}="4";(20,-20)*{\circ}="5";
(-20,-10)*{\cdot}="6";(-10,-10)*{\cdot}="7";(0,-10)*{\cdot}="8";(10,-10)*{\cdot}="9";(20,-10)*{\cdot}="10";
(-20,0)*{\cdot}="11";(-10,0)*{\cdot}="12";(0,0)*{\cdot}="13";(10,0)*{\cdot}="14";(20,0)*{\cdot}="15";
(-20,10)*{\cdot}="16";(-10,10)*{\cdot}="17";(0,10)*{\cdot}="18";(10,10)*{\cdot}="19";(20,10)*{\cdot}="20";
(-20,20)*{\circ}="21";(-10,20)*{\cdot}="22";(0,20)*{\cdot}="23";(10,20)*{\cdot}="24";(20,20)*{\circ}="25";
{\ar|-{1}"1";"2"};{\ar"2";"3"};{\ar"3";"4"};{\ar"4";"5"};
{\ar"6";"7"};{\ar|-{1}"7";"8"};{\ar"8";"9"};{\ar"9";"10"};
{\ar"11";"12"};{\ar"12";"13"};{\ar|-{1}"13";"14"};{\ar"14";"15"};
{\ar"16";"17"};{\ar"17";"18"};{\ar"18";"19"};{\ar|-{1}"19";"20"};
{\ar|-{1}"21";"22"};{\ar"22";"23"};{\ar"23";"24"};{\ar"24";"25"};
{\ar|-{2}"22";"16"};{\ar|-{2}"23";"17"};{\ar"17";"11"};
{\ar|-{2}"24";"18"};{\ar"18";"12"};{\ar"12";"6"};
{\ar|-{2}"25";"19"};{\ar"19";"13"};{\ar"13";"7"};{\ar"7";"1"};
{\ar"20";"14"};{\ar"14";"8"};{\ar"8";"2"};
{\ar"15";"9"};{\ar"9";"3"};
{\ar"10";"4"};
\endxy
\\
Q^{a_1} & & Q^{a_2} & & Q^{a_3}
\end{array}$$
\caption{
$Q^{a_1}$ (resp.\ $Q^{a_2}$, $Q^{a_3}$) supports the $\mathbb{C}^* \times \mathbb{C}^*$-family of $\ell_{\mathcal{P}} =1$ large modules parameterized by the vanishing of the single coordinate $z=0$ (resp.\ $x=0$, $y=0$) in the smooth locus of $\operatorname{Max}Z$, corresponding to the vertex $a_1$ (resp.\ $a_2$, $a_3$) in the toric diagram (figure \ref{toric diagram}).
For each $1 \leq i \leq 2$, the subquiver obtained by removing all arrows from $Q^{a_j}$ labeled $i$ supports the $\mathbb{C}^*$-family of $\ell_{\mathcal{P}} = 2$ almost large modules corresponding to an edge emanating from $a_j$ in the toric diagram.}
\label{G1}
\end{figure}

\begin{figure}
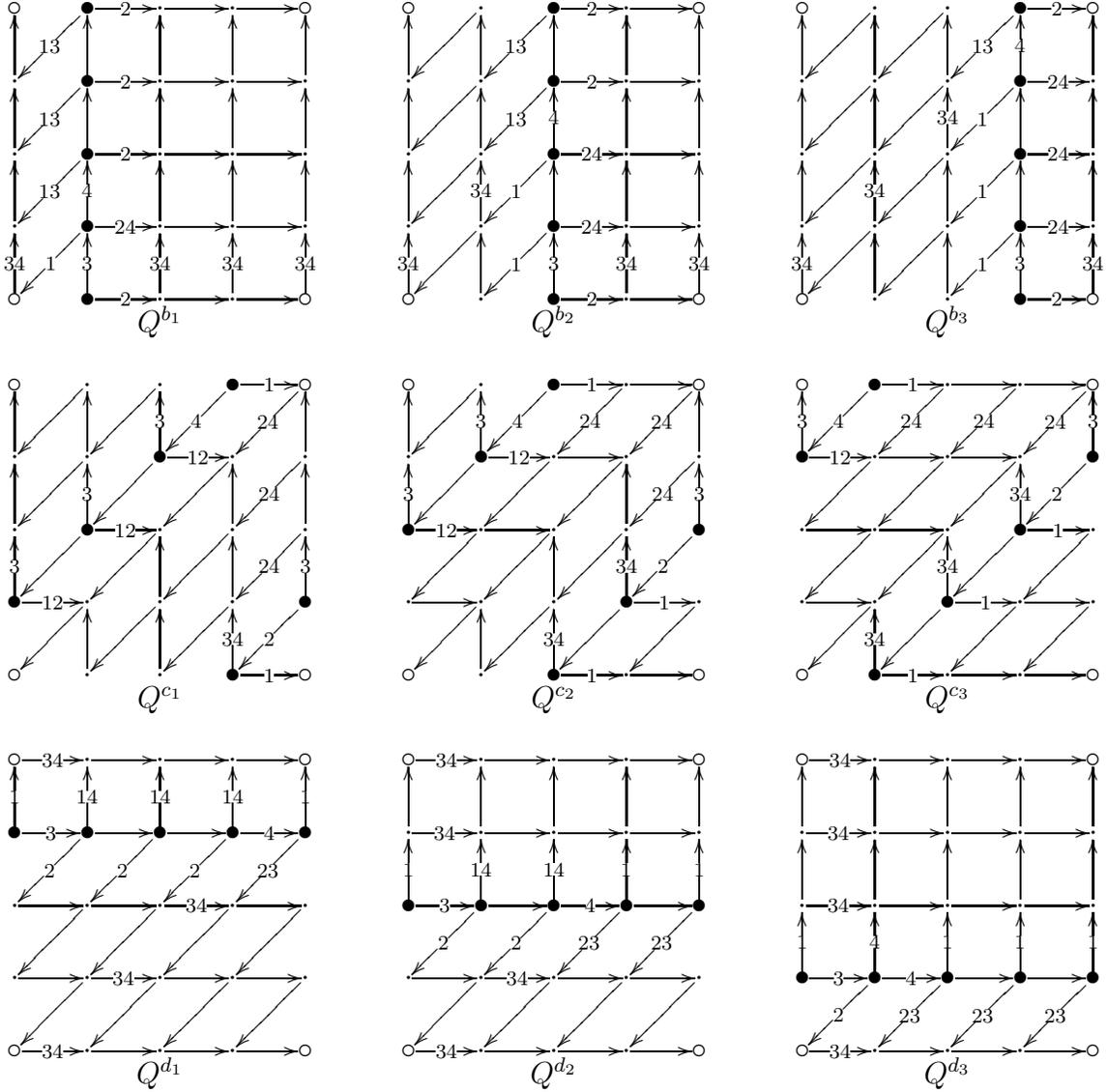

$$\begin{array}{ccccc}
\xy
(-20,-20)*{\circ}="1";(-10,-20)*{\bullet}="2";(0,-20)*{\cdot}="3";(10,-20)*{\cdot}="4";(20,-20)*{\circ}="5";
(-20,-10)*{\cdot}="6";(-10,-10)*{\bullet}="7";(0,-10)*{\cdot}="8";(10,-10)*{\cdot}="9";(20,-10)*{\cdot}="10";
(-20,0)*{\cdot}="11";(-10,0)*{\bullet}="12";(0,0)*{\cdot}="13";(10,0)*{\cdot}="14";(20,0)*{\cdot}="15";
(-20,10)*{\cdot}="16";(-10,10)*{\bullet}="17";(0,10)*{\cdot}="18";(10,10)*{\cdot}="19";(20,10)*{\cdot}="20";
(-20,20)*{\circ}="21";(-10,20)*{\bullet}="22";(0,20)*{\cdot}="23";(10,20)*{\cdot}="24";(20,20)*{\circ}="25";
{\ar|-{34}"1";"6"};{\ar"6";"11"};{\ar"11";"16"};{\ar"16";"21"};
{\ar|-{3}"2";"7"};{\ar|-{4}"7";"12"};{\ar"12";"17"};{\ar"17";"22"};
{\ar|-{34}"3";"8"};{\ar"8";"13"};{\ar"13";"18"};{\ar"18";"23"};
{\ar|-{34}"4";"9"};{\ar"9";"14"};{\ar"14";"19"};{\ar"19";"24"};
{\ar|-{34}"5";"10"};{\ar"10";"15"};{\ar"15";"20"};{\ar"20";"25"};
{\ar|-{2}"2";"3"};{\ar"3";"4"};{\ar"4";"5"};
{\ar|-{24}"7";"8"};{\ar"8";"9"};{\ar"9";"10"};
{\ar|-{2}"12";"13"};{\ar"13";"14"};{\ar"14";"15"};
{\ar|-{2}"17";"18"};{\ar"18";"19"};{\ar"19";"20"};
{\ar|-{2}"22";"23"};{\ar"23";"24"};{\ar"24";"25"};
{\ar|-{1}"7";"1"};{\ar|-{13}"12";"6"};{\ar|-{13}"17";"11"};{\ar|-{13}"22";"16"};
\endxy
&
\ \ \ 
&
\xy
(-20,-20)*{\circ}="1";(-10,-20)*{\cdot}="2";(0,-20)*{\bullet}="3";(10,-20)*{\cdot}="4";(20,-20)*{\circ}="5";
(-20,-10)*{\cdot}="6";(-10,-10)*{\cdot}="7";(0,-10)*{\bullet}="8";(10,-10)*{\cdot}="9";(20,-10)*{\cdot}="10";
(-20,0)*{\cdot}="11";(-10,0)*{\cdot}="12";(0,0)*{\bullet}="13";(10,0)*{\cdot}="14";(20,0)*{\cdot}="15";
(-20,10)*{\cdot}="16";(-10,10)*{\cdot}="17";(0,10)*{\bullet}="18";(10,10)*{\cdot}="19";(20,10)*{\cdot}="20";
(-20,20)*{\circ}="21";(-10,20)*{\cdot}="22";(0,20)*{\bullet}="23";(10,20)*{\cdot}="24";(20,20)*{\circ}="25";
{\ar|-{34}"1";"6"};{\ar"6";"11"};{\ar"11";"16"};{\ar"16";"21"};
{\ar"2";"7"};{\ar|-{34}"7";"12"};{\ar"12";"17"};{\ar"17";"22"};
{\ar|-{3}"3";"8"};{\ar"8";"13"};{\ar|-{4}"13";"18"};{\ar"18";"23"};
{\ar|-{34}"4";"9"};{\ar"9";"14"};{\ar"14";"19"};{\ar"19";"24"};
{\ar|-{34}"5";"10"};{\ar"10";"15"};{\ar"15";"20"};{\ar"20";"25"};
{\ar|-{2}"3";"4"};{\ar"4";"5"};
{\ar|-{24}"8";"9"};{\ar"9";"10"};
{\ar|-{24}"13";"14"};{\ar"14";"15"};
{\ar|-{2}"18";"19"};{\ar"19";"20"};
{\ar|-{2}"23";"24"};{\ar"24";"25"};
{\ar"7";"1"};{\ar"12";"6"};{\ar"17";"11"};{\ar"22";"16"};
{\ar|-{1}"8";"2"};{\ar|-{1}"13";"7"};{\ar|-{13}"18";"12"};{\ar|-{13}"23";"17"};
\endxy
&
\ \ \ 
&
\xy
(-20,-20)*{\circ}="1";(-10,-20)*{\cdot}="2";(0,-20)*{\cdot}="3";(10,-20)*{\bullet}="4";(20,-20)*{\circ}="5";
(-20,-10)*{\cdot}="6";(-10,-10)*{\cdot}="7";(0,-10)*{\cdot}="8";(10,-10)*{\bullet}="9";(20,-10)*{\cdot}="10";
(-20,0)*{\cdot}="11";(-10,0)*{\cdot}="12";(0,0)*{\cdot}="13";(10,0)*{\bullet}="14";(20,0)*{\cdot}="15";
(-20,10)*{\cdot}="16";(-10,10)*{\cdot}="17";(0,10)*{\cdot}="18";(10,10)*{\bullet}="19";(20,10)*{\cdot}="20";
(-20,20)*{\circ}="21";(-10,20)*{\cdot}="22";(0,20)*{\cdot}="23";(10,20)*{\bullet}="24";(20,20)*{\circ}="25";
{\ar|-{34}"1";"6"};{\ar"6";"11"};{\ar"11";"16"};{\ar"16";"21"};
{\ar"2";"7"};{\ar|-{34}"7";"12"};{\ar"12";"17"};{\ar"17";"22"};
{\ar"3";"8"};{\ar"8";"13"};{\ar|-{34}"13";"18"};{\ar"18";"23"};
{\ar|-{3}"4";"9"};{\ar"9";"14"};{\ar"14";"19"};{\ar|-{4}"19";"24"};
{\ar|-{34}"5";"10"};{\ar"10";"15"};{\ar"15";"20"};{\ar"20";"25"};
{\ar|-{2}"4";"5"};
{\ar|-{24}"9";"10"};
{\ar|-{24}"14";"15"};
{\ar|-{24}"19";"20"};
{\ar|-{2}"24";"25"};
{\ar"7";"1"};{\ar"12";"6"};{\ar"17";"11"};{\ar"22";"16"};
{\ar"8";"2"};{\ar"13";"7"};{\ar"18";"12"};{\ar"23";"17"};
{\ar|-{1}"9";"3"};{\ar|-{1}"14";"8"};{\ar|-{1}"19";"13"};{\ar|-{13}"24";"18"};
\endxy
\\
Q^{b_1} &&  Q^{b_2} && Q^{b_3}\\
\\
\xy
(-20,-20)*{\circ}="1";(-10,-20)*{\cdot}="2";(0,-20)*{\cdot}="3";(10,-20)*{\bullet}="4";(20,-20)*{\circ}="5";
(-20,-10)*{\bullet}="6";(-10,-10)*{\cdot}="7";(0,-10)*{\cdot}="8";(10,-10)*{\cdot}="9";(20,-10)*{\bullet}="10";
(-20,0)*{\cdot}="11";(-10,0)*{\bullet}="12";(0,0)*{\cdot}="13";(10,0)*{\cdot}="14";(20,0)*{\cdot}="15";
(-20,10)*{\cdot}="16";(-10,10)*{\cdot}="17";(0,10)*{\bullet}="18";(10,10)*{\cdot}="19";(20,10)*{\cdot}="20";
(-20,20)*{\circ}="21";(-10,20)*{\cdot}="22";(0,20)*{\cdot}="23";(10,20)*{\bullet}="24";(20,20)*{\circ}="25";
{\ar"22";"16"};{\ar"23";"17"};{\ar"17";"11"};
{\ar|-{4}"24";"18"};{\ar"18";"12"};{\ar"12";"6"};
{\ar|-{24}"25";"19"};{\ar"19";"13"};{\ar"13";"7"};{\ar"7";"1"};
{\ar|-{24}"20";"14"};{\ar"14";"8"};{\ar"8";"2"};
{\ar|-{24}"15";"9"};{\ar"9";"3"};{\ar|-{2}"10";"4"};
{\ar"2";"7"};{\ar"8";"13"};{\ar"14";"19"};{\ar"20";"25"};
{\ar"3";"8"};{\ar"9";"14"};{\ar"15";"20"};
{\ar|-{34}"4";"9"};{\ar|-{3}"10";"15"};{\ar|-{1}"4";"5"};
{\ar"16";"21"};{\ar"11";"16"};{\ar"17";"22"};
{\ar|-{3}"6";"11"};{\ar|-{3}"12";"17"};{\ar|-{3}"18";"23"};
{\ar|-{12}"6";"7"};{\ar|-{12}"12";"13"};{\ar|-{12}"18";"19"};{\ar|-{1}"24";"25"};
\endxy
& &
\xy
(-20,-20)*{\circ}="1";(-10,-20)*{\cdot}="2";(0,-20)*{\bullet}="3";(10,-20)*{\cdot}="4";(20,-20)*{\circ}="5";
(-20,-10)*{\cdot}="6";(-10,-10)*{\cdot}="7";(0,-10)*{\cdot}="8";(10,-10)*{\bullet}="9";(20,-10)*{\cdot}="10";
(-20,0)*{\bullet}="11";(-10,0)*{\cdot}="12";(0,0)*{\cdot}="13";(10,0)*{\cdot}="14";(20,0)*{\bullet}="15";
(-20,10)*{\cdot}="16";(-10,10)*{\bullet}="17";(0,10)*{\cdot}="18";(10,10)*{\cdot}="19";(20,10)*{\cdot}="20";
(-20,20)*{\circ}="21";(-10,20)*{\cdot}="22";(0,20)*{\bullet}="23";(10,20)*{\cdot}="24";(20,20)*{\circ}="25";
{\ar"22";"16"};{\ar|-{4}"23";"17"};{\ar"17";"11"};
{\ar|-{24}"24";"18"};{\ar"18";"12"};{\ar"12";"6"};
{\ar|-{24}"25";"19"};{\ar"19";"13"};{\ar"13";"7"};{\ar"7";"1"};
{\ar|-{24}"20";"14"};{\ar"14";"8"};{\ar"8";"2"};
{\ar|-{2}"15";"9"};{\ar"9";"3"};{\ar"10";"4"};
{\ar"2";"7"};{\ar"8";"13"};{\ar"14";"19"};{\ar"20";"25"};
{\ar|-{34}"3";"8"};{\ar|-{34}"9";"14"};{\ar|-{3}"15";"20"};
{\ar"4";"5"};
{\ar"16";"21"};{\ar|-{3}"11";"16"};{\ar|-{3}"17";"22"};
{\ar"6";"7"};{\ar"12";"13"};{\ar"18";"19"};{\ar"24";"25"};
{\ar|-{12}"11";"12"};{\ar|-{12}"17";"18"};{\ar|-{1}"23";"24"};{\ar|-{1}"3";"4"};{\ar|-{1}"9";"10"};
\endxy
& &
\xy
(-20,-20)*{\circ}="1";(-10,-20)*{\bullet}="2";(0,-20)*{\cdot}="3";(10,-20)*{\cdot}="4";(20,-20)*{\circ}="5";
(-20,-10)*{\cdot}="6";(-10,-10)*{\cdot}="7";(0,-10)*{\bullet}="8";(10,-10)*{\cdot}="9";(20,-10)*{\cdot}="10";
(-20,0)*{\cdot}="11";(-10,0)*{\cdot}="12";(0,0)*{\cdot}="13";(10,0)*{\bullet}="14";(20,0)*{\cdot}="15";
(-20,10)*{\bullet}="16";(-10,10)*{\cdot}="17";(0,10)*{\cdot}="18";(10,10)*{\cdot}="19";(20,10)*{\bullet}="20";
(-20,20)*{\circ}="21";(-10,20)*{\bullet}="22";(0,20)*{\cdot}="23";(10,20)*{\cdot}="24";(20,20)*{\circ}="25";
{\ar|-{4}"22";"16"};{\ar|-{24}"23";"17"};{\ar"17";"11"};
{\ar|-{24}"24";"18"};{\ar"18";"12"};{\ar"12";"6"};
{\ar|-{24}"25";"19"};{\ar"19";"13"};{\ar"13";"7"};{\ar"7";"1"};
{\ar|-{2}"20";"14"};{\ar"14";"8"};{\ar"8";"2"};
{\ar"15";"9"};{\ar"9";"3"};{\ar"10";"4"};
{\ar|-{34}"2";"7"};{\ar|-{34}"8";"13"};{\ar|-{34}"14";"19"};{\ar|-{3}"20";"25"};
{\ar"4";"5"};
{\ar|-{3}"16";"21"};
{\ar"6";"7"};{\ar"12";"13"};{\ar"18";"19"};{\ar"24";"25"};
{\ar"11";"12"};{\ar"17";"18"};{\ar"23";"24"};{\ar"3";"4"};{\ar"9";"10"};
{\ar|-{1}"2";"3"};{\ar|-{1}"8";"9"};{\ar|-{1}"14";"15"};{\ar|-{1}"22";"23"};{\ar|-{12}"16";"17"};
\endxy
\\
Q^{c_1} & & Q^{c_2} & & Q^{c_3}\\
\\
\xy
(-20,-20)*{\circ}="1";(-10,-20)*{\cdot}="2";(0,-20)*{\cdot}="3";(10,-20)*{\cdot}="4";(20,-20)*{\circ}="5";
(-20,-10)*{\cdot}="6";(-10,-10)*{\cdot}="7";(0,-10)*{\cdot}="8";(10,-10)*{\cdot}="9";(20,-10)*{\cdot}="10";
(-20,0)*{\cdot}="11";(-10,0)*{\cdot}="12";(0,0)*{\cdot}="13";(10,0)*{\cdot}="14";(20,0)*{\cdot}="15";
(-20,10)*{\bullet}="16";(-10,10)*{\bullet}="17";(0,10)*{\bullet}="18";(10,10)*{\bullet}="19";(20,10)*{\bullet}="20";
(-20,20)*{\circ}="21";(-10,20)*{\cdot}="22";(0,20)*{\cdot}="23";(10,20)*{\cdot}="24";(20,20)*{\circ}="25";
{\ar|-{34}"1";"2"};{\ar"2";"3"};{\ar"3";"4"};{\ar"4";"5"};
{\ar"6";"7"};{\ar|-{34}"7";"8"};{\ar"8";"9"};{\ar"9";"10"};
{\ar"11";"12"};{\ar"12";"13"};{\ar|-{34}"13";"14"};{\ar"14";"15"};
{\ar|-{3}"16";"17"};{\ar"17";"18"};{\ar"18";"19"};{\ar|-{4}"19";"20"};
{\ar|-{34}"21";"22"};{\ar"22";"23"};{\ar"23";"24"};{\ar"24";"25"};
{\ar|-{1}"16";"21"};
{\ar|-{14}"17";"22"};
{\ar|-{14}"18";"23"};
{\ar|-{14}"19";"24"};
{\ar|-{1}"20";"25"};
{\ar"7";"1"};{\ar"8";"2"};{\ar"9";"3"};{\ar"10";"4"};
{\ar"12";"6"};{\ar"13";"7"};{\ar"14";"8"};{\ar"15";"9"};
{\ar|-{2}"17";"11"};{\ar|-{2}"18";"12"};{\ar|-{2}"19";"13"};{\ar|-{23}"20";"14"};
\endxy
& &
\xy
(-20,-20)*{\circ}="1";(-10,-20)*{\cdot}="2";(0,-20)*{\cdot}="3";(10,-20)*{\cdot}="4";(20,-20)*{\circ}="5";
(-20,-10)*{\cdot}="6";(-10,-10)*{\cdot}="7";(0,-10)*{\cdot}="8";(10,-10)*{\cdot}="9";(20,-10)*{\cdot}="10";
(-20,0)*{\bullet}="11";(-10,0)*{\bullet}="12";(0,0)*{\bullet}="13";(10,0)*{\bullet}="14";(20,0)*{\bullet}="15";
(-20,10)*{\cdot}="16";(-10,10)*{\cdot}="17";(0,10)*{\cdot}="18";(10,10)*{\cdot}="19";(20,10)*{\cdot}="20";
(-20,20)*{\circ}="21";(-10,20)*{\cdot}="22";(0,20)*{\cdot}="23";(10,20)*{\cdot}="24";(20,20)*{\circ}="25";
{\ar|-{34}"1";"2"};{\ar"2";"3"};{\ar"3";"4"};{\ar"4";"5"};
{\ar"6";"7"};{\ar|-{34}"7";"8"};{\ar"8";"9"};{\ar"9";"10"};
{\ar|-{3}"11";"12"};{\ar"12";"13"};{\ar|-{4}"13";"14"};{\ar"14";"15"};
{\ar|-{34}"16";"17"};{\ar"17";"18"};{\ar"18";"19"};{\ar"19";"20"};
{\ar|-{34}"21";"22"};{\ar"22";"23"};{\ar"23";"24"};{\ar"24";"25"};
{\ar|-{1}"11";"16"};{\ar"16";"21"};
{\ar|-{14}"12";"17"};{\ar"17";"22"};
{\ar|-{14}"13";"18"};{\ar"18";"23"};
{\ar|-{1}"14";"19"};{\ar"19";"24"};
{\ar|-{1}"15";"20"};{\ar"20";"25"};
{\ar"7";"1"};{\ar"8";"2"};{\ar"9";"3"};{\ar"10";"4"};
{\ar|-{2}"12";"6"};{\ar|-{2}"13";"7"};{\ar|-{23}"14";"8"};{\ar|-{23}"15";"9"};
\endxy
& & 
\xy
(-20,-20)*{\circ}="1";(-10,-20)*{\cdot}="2";(0,-20)*{\cdot}="3";(10,-20)*{\cdot}="4";(20,-20)*{\circ}="5";
(-20,-10)*{\bullet}="6";(-10,-10)*{\bullet}="7";(0,-10)*{\bullet}="8";(10,-10)*{\bullet}="9";(20,-10)*{\bullet}="10";
(-20,0)*{\cdot}="11";(-10,0)*{\cdot}="12";(0,0)*{\cdot}="13";(10,0)*{\cdot}="14";(20,0)*{\cdot}="15";
(-20,10)*{\cdot}="16";(-10,10)*{\cdot}="17";(0,10)*{\cdot}="18";(10,10)*{\cdot}="19";(20,10)*{\cdot}="20";
(-20,20)*{\circ}="21";(-10,20)*{\cdot}="22";(0,20)*{\cdot}="23";(10,20)*{\cdot}="24";(20,20)*{\circ}="25";
{\ar|-{34}"1";"2"};{\ar"2";"3"};{\ar"3";"4"};{\ar"4";"5"};
{\ar|-{3}"6";"7"};{\ar|-{4}"7";"8"};{\ar"8";"9"};{\ar"9";"10"};
{\ar|-{34}"11";"12"};{\ar"12";"13"};{\ar"13";"14"};{\ar"14";"15"};
{\ar|-{34}"16";"17"};{\ar"17";"18"};{\ar"18";"19"};{\ar"19";"20"};
{\ar|-{34}"21";"22"};{\ar"22";"23"};{\ar"23";"24"};{\ar"24";"25"};
{\ar|-{1}"6";"11"};{\ar"11";"16"};{\ar"16";"21"};
{\ar|-{4}"7";"12"};{\ar"12";"17"};{\ar"17";"22"};
{\ar|-{1}"8";"13"};{\ar"13";"18"};{\ar"18";"23"};
{\ar|-{1}"9";"14"};{\ar"14";"19"};{\ar"19";"24"};
{\ar|-{1}"10";"15"};{\ar"15";"20"};{\ar"20";"25"};
{\ar|-{2}"7";"1"};{\ar|-{23}"8";"2"};{\ar|-{23}"9";"3"};{\ar|-{23}"10";"4"};
\endxy
\\
Q^{d_1} & & Q^{d_2} & & Q^{d_3}
\end{array}$$
\caption{
For $g \in \left\{ b_j,c_j,d_j \right\}$, $Q^{g}$ supports the $\mathbb{C}^*$-family of $\mathbb{P}^1$-families of $\ell_{\mathcal{P}} =1$ almost large modules parameterized by the irreducible component of the exceptional locus corresponding to the vertex $g$ on the perimeter of the toric diagram (figure \ref{toric diagram}).
For each $1 \leq i \leq 4$, the subquiver obtained by removing all arrows from $Q^{g}$ labeled $i$ supports the $\mathbb{C}^*$- or $\mathbb{P}^1$-family of $\ell_{\mathcal{P}} = 2$ almost large modules corresponding to an edge emanating from $g$ in the toric diagram ($\mathbb{C}^*$ iff the edge is along the perimeter).}
\label{G2}
\end{figure}

\begin{figure}
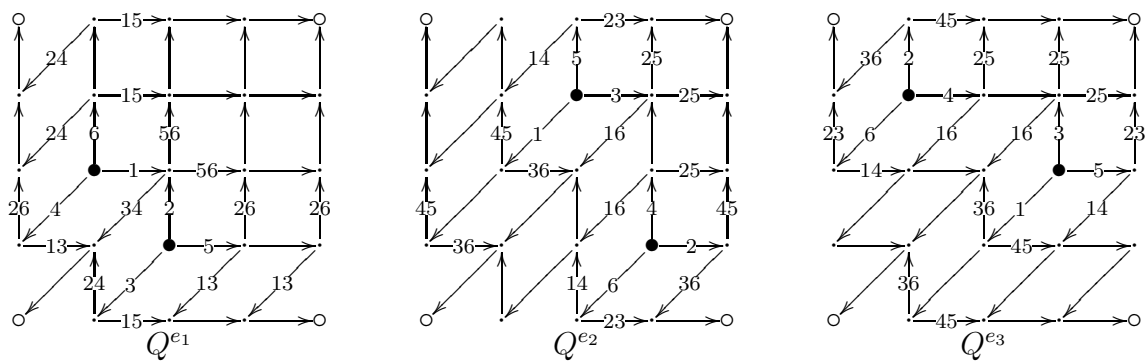

$$\begin{array}{ccccc}
\xy
(-20,-20)*{\circ}="1";(-10,-20)*{\cdot}="2";(0,-20)*{\cdot}="3";(10,-20)*{\cdot}="4";(20,-20)*{\circ}="5";
(-20,-10)*{\cdot}="6";(-10,-10)*{\cdot}="7";(0,-10)*{\bullet}="8";(10,-10)*{\cdot}="9";(20,-10)*{\cdot}="10";
(-20,0)*{\cdot}="11";(-10,0)*{\bullet}="12";(0,0)*{\cdot}="13";(10,0)*{\cdot}="14";(20,0)*{\cdot}="15";
(-20,10)*{\cdot}="16";(-10,10)*{\cdot}="17";(0,10)*{\cdot}="18";(10,10)*{\cdot}="19";(20,10)*{\cdot}="20";
(-20,20)*{\circ}="21";(-10,20)*{\cdot}="22";(0,20)*{\cdot}="23";(10,20)*{\cdot}="24";(20,20)*{\circ}="25";
{\ar|-{15}"2";"3"};{\ar"3";"4"};{\ar"4";"5"};
{\ar|-{13}"6";"7"};{\ar|-{5}"8";"9"};{\ar"9";"10"};
{\ar|-{1}"12";"13"};{\ar|-{56}"13";"14"};{\ar"14";"15"};{\ar|-{15}"17";"18"};{\ar"18";"19"};{\ar"19";"20"};
{\ar|-{15}"22";"23"};{\ar"23";"24"};{\ar"24";"25"};
{\ar|-{26}"6";"11"};{\ar"11";"16"};{\ar"16";"21"};
{\ar|-{24}"2";"7"};{\ar|-{6}"12";"17"};{\ar"17";"22"};
{\ar|-{2}"8";"13"};{\ar|-{56}"13";"18"};{\ar"18";"23"};
{\ar|-{26}"9";"14"};{\ar"14";"19"};{\ar"19";"24"};
{\ar|-{26}"10";"15"};{\ar"15";"20"};{\ar"20";"25"};
{\ar|-{24}"22";"16"};{\ar|-{24}"17";"11"};{\ar|-{4}"12";"6"};
{\ar|-{34}"13";"7"};{\ar"7";"1"};
{\ar|-{3}"8";"2"};{\ar|-{13}"9";"3"};{\ar|-{13}"10";"4"};
\endxy
&
\ \ \ 
&
\xy
(-20,-20)*{\circ}="1";(-10,-20)*{\cdot}="2";(0,-20)*{\cdot}="3";(10,-20)*{\cdot}="4";(20,-20)*{\circ}="5";
(-20,-10)*{\cdot}="6";(-10,-10)*{\cdot}="7";(0,-10)*{\cdot}="8";(10,-10)*{\bullet}="9";(20,-10)*{\cdot}="10";
(-20,0)*{\cdot}="11";(-10,0)*{\cdot}="12";(0,0)*{\cdot}="13";(10,0)*{\cdot}="14";(20,0)*{\cdot}="15";
(-20,10)*{\cdot}="16";(-10,10)*{\cdot}="17";(0,10)*{\bullet}="18";(10,10)*{\cdot}="19";(20,10)*{\cdot}="20";
(-20,20)*{\circ}="21";(-10,20)*{\cdot}="22";(0,20)*{\cdot}="23";(10,20)*{\cdot}="24";(20,20)*{\circ}="25";
{\ar"4";"5"};{\ar|-{36}"6";"7"};{\ar|-{2}"9";"10"};
{\ar|-{36}"12";"13"};{\ar|-{25}"14";"15"};{\ar|-{3}"18";"19"};{\ar|-{25}"19";"20"};
{\ar|-{23}"23";"24"};{\ar"24";"25"};
{\ar|-{45}"6";"11"};{\ar"11";"16"};{\ar"16";"21"};
{\ar"2";"7"};{\ar|-{45}"12";"17"};{\ar"17";"22"};
{\ar|-{14}"3";"8"};{\ar"8";"13"};{\ar|-{5}"18";"23"};
{\ar|-{4}"9";"14"};{\ar"14";"19"};{\ar|-{25}"19";"24"};
{\ar|-{45}"10";"15"};{\ar"15";"20"};{\ar"20";"25"};
{\ar"22";"16"};{\ar"17";"11"};{\ar"12";"6"};{\ar"7";"1"};
{\ar|-{14}"23";"17"};{\ar|-{1}"18";"12"};{\ar"13";"7"};{\ar"8";"2"};
{\ar|-{16}"19";"13"};{\ar|-{16}"14";"8"};{\ar|-{6}"9";"3"};
{\ar|-{36}"10";"4"};{\ar|-{23}"3";"4"};
\endxy
&
\ \ \ 
&
\xy
(-20,-20)*{\circ}="1";(-10,-20)*{\cdot}="2";(0,-20)*{\cdot}="3";(10,-20)*{\cdot}="4";(20,-20)*{\circ}="5";
(-20,-10)*{\cdot}="6";(-10,-10)*{\cdot}="7";(0,-10)*{\cdot}="8";(10,-10)*{\cdot}="9";(20,-10)*{\cdot}="10";
(-20,0)*{\cdot}="11";(-10,0)*{\cdot}="12";(0,0)*{\cdot}="13";(10,0)*{\bullet}="14";(20,0)*{\cdot}="15";
(-20,10)*{\cdot}="16";(-10,10)*{\bullet}="17";(0,10)*{\cdot}="18";(10,10)*{\cdot}="19";(20,10)*{\cdot}="20";
(-20,20)*{\circ}="21";(-10,20)*{\cdot}="22";(0,20)*{\cdot}="23";(10,20)*{\cdot}="24";(20,20)*{\circ}="25";
{\ar|-{45}"2";"3"};{\ar"3";"4"};{\ar"4";"5"};
{\ar"6";"7"};{\ar|-{45}"8";"9"};{\ar"9";"10"};
{\ar|-{14}"11";"12"};{\ar"12";"13"};{\ar|-{5}"14";"15"};
{\ar|-{4}"17";"18"};{\ar"18";"19"};{\ar|-{25}"19";"20"};
{\ar|-{45}"22";"23"};{\ar"23";"24"};{\ar"24";"25"};
{\ar|-{23}"11";"16"};{\ar"16";"21"};{\ar|-{36}"2";"7"};{\ar|-{2}"17";"22"};
{\ar|-{36}"8";"13"};{\ar|-{25}"18";"23"};{\ar|-{3}"14";"19"};{\ar|-{25}"19";"24"};
{\ar|-{23}"15";"20"};{\ar"20";"25"};
{\ar"7";"1"};{\ar"12";"6"};{\ar|-{6}"17";"11"};{\ar|-{36}"22";"16"};
{\ar|-{16}"18";"12"};{\ar"13";"7"};{\ar"8";"2"};{\ar|-{16}"19";"13"};{\ar|-{1}"14";"8"};{\ar"9";"3"};
{\ar|-{14}"15";"9"};{\ar"10";"4"};
\endxy\\
Q^{e_1} & & Q^{e_2} & & Q^{e_3}
\end{array}$$
\caption{
$Q^{e_j}$ supports the $\mathbb{P}^2$-family of $\ell_{\mathcal{P}} =1$ almost large modules parameterized by the irreducible component of the exceptional locus corresponding to the vertex $e_j$ in the toric diagram (figure \ref{toric diagram}).
For each $1 \leq i \leq 6$, the subquiver obtained by removing all arrows from $Q^{e_j}$ labeled $i$ supports the $\mathbb{P}^1$-family of $\ell_{\mathcal{P}} = 2$ almost large modules corresponding to an edge emanating from $e_j$ in the toric diagram.}
\label{G3}
\end{figure}

\begin{Remark} \rm{
This example shows that the irreducible components of the exceptional locus need not shrink to the annihilator of a vertex simple module: Each $\mathbb{P}^1$-family supported on a subquiver in figure \ref{G2} shrinks to the annihilator of a simple module supported on a subquiver with vertex set given by the bold vertices in the figure.  Such a point in $\operatorname{Max}A$, which we view as a point-like sphere, sits over a point of $\operatorname{Max}R$ with one non-vanishing coordinate ($x$, $y$, or $z$).  Furthermore, each $\mathbb{P}^2$-family supported on a subquiver in figure \ref{G3} collapses to two points in $\operatorname{Max}A$, namely the annihilators of the two vertex simples at the bold vertices in the figure.  Both of these points sit over the origin of $\operatorname{Max}R$.
} \end{Remark}

\begin{Remark} \rm{
This example and the conifold quiver algebra from section \ref{The conifold} are examples of square superpotential algebras (see \cite[Definition 1.1]{B}).  The supporting subquivers for the $\ell_{\mathcal{P}} = 1$ (resp.\ $\ell_{\mathcal{P}} = 2$; $\ell_{\mathcal{P}} = 3$) large and almost large modules coincide with the subquivers obtained by removing all the arrows from $Q$ that occur in a so called perfect matching (resp.\ the intersection of two perfect matchings; the intersection of three perfect matchings).  In this sense perfect matchings may be viewed as a special case of almost large modules over a particular class of quiver algebras whose centers are toric Gorenstein singularities, and whose relations are derived from a potential.  This observation will be addressed in a forthcoming paper, \cite{B2}.
} \end{Remark}

\newpage
\bibliographystyle{hep}
\def\cprime{$'$} \def\cprime{$'$}

\end{document}